\tikzset{shorten <>/.style={shorten >=#1,shorten <=#1}}
  \definecolor{link}{rgb}{0,0,0.5}
\newtheorem{theorem}{Theorem}[section]
\newtheorem{lemma}[theorem]{Lemma}
\theoremstyle{definition}
\theoremstyle{remark}
\numberwithin{equation}{section}
\newcommand{\Bicat}{{\sf BiCat}}
\newcommand{\proj}{{\rm proj}}
\theoremstyle{plain}
\newtheorem{thm}{Theorem}[section]
\newtheorem{cor}[thm]{Corollary}
\newtheorem{lem}[thm]{Lemma}
\newtheorem{prop}[thm]{Proposition}
\newtheorem{defn}[thm]{Definition}
\theoremstyle{remark}
\newtheorem{rmk}[thm]{Remark}
\newtheorem{ex}[thm]{Example} 
\newtheorem{notation}[thm]{Notation}
\newtheorem{convention}[thm]{Convention}
\newtheorem*{thm*}{Theorem}
\newsavebox{\pullback}
\sbox\pullback{%
\begin{tikzpicture}%
\draw (0,0) -- (1ex,0ex);%
\draw (1ex,0ex) -- (1ex,1ex);%
\end{tikzpicture}}
\newcommand{\Cyl}{{\sf {Cyl}}}
\newcommand{\Fun}{{\sf {Fun}}}
\newcommand{\calg}{\mathcal{G}}
\newcommand{\calp}{\mathcal{P}}
\newcommand{\one}{\mathbbm{1}}
\newcommand{\cc}{{\sf c}}
\newcommand{\tri}{{\sf t}}
\newcommand{\beq}{\begin{eqnarray}}
\newcommand{\eeq}{\end{eqnarray}}
\newcommand{\bp}{\begin{proof}[Proof]}
\newcommand{\ep}{\end{proof}}
\newcommand{\rmH}{{\rm H}}
\newcommand{\Spin}{{\rm Spin}}
\newcommand{\String}{{\rm String}}
\newcommand{\U}{{\rm U}}
\newcommand{\SO}{{\rm SO}}
\newcommand{\Map}{{\sf Map}}
\newcommand{\cG}{\mathcal{G}}
\newcommand{\cZ}{\mathcal{Z}}
\newcommand{\cL}{\mathcal{L}}
\newcommand{\cP}{\mathcal{P}}
\newcommand{\cY}{\mathcal{Y}}
\newcommand{\sC}{\mathscr{C}}
\newcommand{\sL}{\mathscr{L}}
\newcommand{\sP}{\mathscr{P}}
\newcommand{\sPG}{\mathscr{P}_\cG}
\newcommand{\Gerbe}{{\sf Gerbe}}
\newcommand{\pt}{{*}}
\newcommand{\ev}{{\rm ev}}
\newcommand{\CP}{{\mathbb{CP}}}
\newcommand{\N}{{\mathbb{N}}}
\newcommand{\id}{{{\rm id}}}
\newcommand{\C}{{\mathbb{C}}}
\newcommand{\Z}{{\mathbb{Z}}}
\newcommand{\Aut}{{\sf Aut}}
\newcommand{\Bun}{{\sf Bun}}
\newcommand{\SL}{{\rm SL}}
\newcommand{\Hom}{{\sf Hom}}
\newcommand{\Grp}{{\sf Grp}}
\newcommand{\Diff}{{\sf Diff}}
\newcommand{\sq}{/\!\!/}
\newcommand\EquivTo{\xrightarrow{
   \,\smash{\raisebox{-0.65ex}{\ensuremath{\scriptstyle\sim}}}\,}}
\newcommand{\act}{\textsf{act}}
\newcommand{\gact}{\textrm{act}}
\newcommand{\FQ}{\mathcal{L}_{G}^\alpha}
\newcommand{\hol}{\text{hol}}
\newcommand{\eo}{\textsf{e}_1}
\newcommand{\et}{\textsf{e}_2}
\newcommand{\Stab}{\text{Stab}}
\newcommand{\Line}{\text{Line}}
\newcommand{\iin}{i_{\text{in}}}
\newcommand{\iout}{i_{\text{out}}}
\newcommand{\stabhom}[1]{{\sf R}_{#1}}
\newcommand{\BibunGX}{\Bun_\cG(X)}
\newcommand{\BibunGSig}{\Bun_\cG(\Sigma)}
\newcommand{\iso}{\varphi}
\newcommand{\Buntri}{\sf Bun_G(\Sigma)_\Delta}
\newcommand{\mf}\mathfrak
\newcommand{\mc}\mathcal
\newcommand{\mb}\mathbb
\newcommand{\rhohol}{{\tilde{\rho}}}
\newcommand{\gammahol}{\tilde{\gamma}}
\newcommand{\hhol}{\tilde{h}}
\newcommand{\etahol}{\tilde{\eta}}
\newcommand{\omegahol}{\tilde{\omega}}
\newcommand{\lout}{\bgroup\markoverwith
	{\textcolor{orange}{\rule[.6ex]{3pt}{0.6pt}}}\ULon}
\definecolor{OuthouseSurprise}{rgb}{0.498,0.6235,0.094}
\newcommand{\ECout}{\bgroup\markoverwith
	{\textcolor{OuthouseSurprise}{\rule[.6ex]{3pt}{0.6pt}}}\ULon}
\long\def\Etodo#1{{\color{OuthouseSurprise} {#1}}}
\title{The Freed--Quinn line bundle from higher geometry}
\author{Daniel Berwick-Evans, Emily Cliff, and Laura Murray}
\date{\today}
\begin{document}

\begin{abstract}
For a finite group $G$, and level $\alpha\in Z^3(BG;\U(1))$, Freed and Quinn construct a line bundle over the moduli space of $G$-bundles on surfaces. Global sections determine the values of Chern--Simons theory at level $\alpha$ on surfaces. In this paper, we provide an alternate construction using tools from higher geometry: the pair $(G,\alpha)$ determines a 2-group group $\cG$, and the Freed--Quinn line arises as a categorical truncation of the bicategory of $\cG$-bundles.  
\end{abstract}

\maketitle

\setcounter{tocdepth}{1}
\tableofcontents
\section{Introduction}

For an oriented surface $\Sigma$ and a finite group $G$, let $\Bun_G(\Sigma)$ denote the groupoid of principal $G$-bundles over~$\Sigma$. Given a 3-cocycle $\alpha\in Z^3(BG;\U(1))$ representing a class $[\alpha]\in \rmH^3(BG;\U(1))$, Freed and Quinn construct a line bundle $\FQ\to \Bun_G(\Sigma)$. The global sections of $\FQ$ define the value of Chern--Simons at level~$\alpha$ on the surface $\Sigma$. This paper provides a new construction of $\FQ$ using tools from higher geometry. 

Our construction begins with a categorical central extension~\cite{BL04,SP11}
\beq\label{Eq:catextension}
1\to \pt\sq \U(1)\to \cG \to G\to 1
\eeq
determined by the group $G$ and 3-cocycle $\alpha$, where $\cG$ is a \emph{2-group}, i.e., a monoidal groupoid where every object is (weakly) $\otimes$-invertible, see~\S\ref{sec:2grp}. Fixing a 2-group~\eqref{Eq:catextension} and oriented surface $\Sigma$, there is a bicategory $\Bun_\cG(\Sigma)$ of principal $\cG$-bundles on $\Sigma$ with a forgetful functor $\pi\colon \Bun_\cG(\Sigma)\to \Bun_G(\Sigma)$ to the usual groupoid of $G$-bundles on $\Sigma$. Taking fiberwise isomorphisms along $\pi$ we obtain 
\beq\label{eq:LGquotient}
\cP_\cG:=(\Bun_\cG(\Sigma)/{\rm fiberwise\ iso})\xrightarrow{[\pi]} \Bun_G(\Sigma),\qquad \cL_\cG:=\cP_\cG\times_{\U(1)}\C.
\eeq
Explicitly, the fibers of~\eqref{eq:LGquotient} are isomorphism classes of $\cG$-bundles with the same underlying $G$-bundle. We will show that $\cP_\cG$ is naturally a $\U(1)$-principal bundle, and $\cL_\cG$ is defined as the associated line bundle.

\begin{thm}\label{thm:main}
Fix a finite group $G$, degree 3 cocycle $\alpha\in Z^3(BG;\U(1))$, and oriented surface $\Sigma$. There is a canonical isomorphism of line bundles over~$\Bun_G(\Sigma)$ between the Freed--Quinn bundle and~\eqref{eq:LGquotient}, 
    \beq\label{eq:isowFQ}
\cL_{\cG}\simeq \FQ.
    \eeq
Furthermore, this isomorphism is equivariant for the action of the mapping class group of $\Sigma$ on $\Bun_G(\Sigma)$. 
\end{thm}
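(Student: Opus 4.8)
\section*{Proof proposal}

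The plan is to reduce both line bundles to classifying data over the finite groupoid $\Bun_G(\Sigma)$ and match that data, then deduce mapping class group equivariance from the manifest naturality of the whole construction in $\Sigma$. Since $G$ is finite, every $G$-bundle on $\Sigma$ is flat and $\Bun_G(\Sigma)\simeq \Hom(\pi_1\Sigma,G)\sq G$ is a finite groupoid. A principal $\U(1)$-bundle over such a groupoid is classified up to isomorphism by a class in $\rmH^1(\Bun_G(\Sigma);\U(1))$, equivalently by the characters $\chi_P\colon \Aut(P)\to\U(1)$ through which the automorphisms of a set of representatives $P$ act on the fibers. Granting (as asserted in the introduction) that $\cP_\cG$ is a $\U(1)$-principal bundle, the first main step is to compute its class. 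Concretely, over a fixed $P$ I would lift the holonomies of $P$ to objects of $\cG$ and lift the relations of $\pi_1\Sigma$ to $2$-morphisms; the failure of these lifts to be canonical, and the residual $\Aut(P)$-action on the torsor of fiberwise-isomorphism classes of lifts, is governed entirely by the associator $\alpha$, yielding an explicit formula for $\chi_P^{\cG}$ as a product of values of $\alpha$ on the simplices built from $P$ and the given automorphism.

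The second step is to identify this class with the one defining $\FQ$. I would show that $\chi_P^{\cG}$ computes the transgression $\tau(\alpha)=\int_\Sigma \ev^*\alpha\in \rmH^1(\Bun_G(\Sigma);\U(1))$, obtained by pulling $\alpha$ back along the evaluation map $\ev\colon \Sigma\times\Bun_G(\Sigma)\to BG$ and integrating over the fundamental class $[\Sigma]$, and then recall from Freed and Quinn's construction that this same transgression class classifies $\FQ$. Once the two classes in $\rmH^1(\Bun_G(\Sigma);\U(1))$ coincide, the associated line bundles $\cL_\cG$ and $\FQ$ are isomorphic, and I would promote this to a \emph{canonical} isomorphism by writing it fiberwise as the identity of $\C$ in the chosen trivializations and checking independence of the auxiliary choices. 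I expect the matching of the combinatorial formula for $\chi_P^{\cG}$ with the simplicial formula for the transgressed cocycle to be the main obstacle: one must fix compatible simplicial or CW data on $\Sigma$, track orientations and the ordering conventions entering both $\alpha$ and the integration over $[\Sigma]$, and verify that the $\U(1)$-phases agree on the nose rather than up to a sign or a coboundary that would twist the identification.

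Finally, for mapping class group equivariance I would observe that each ingredient---the groupoid $\Bun_G(\Sigma)$, the bicategory $\Bun_\cG(\Sigma)$, the fiberwise-iso construction of $\cP_\cG$, and the transgression $\tau(\alpha)$---is natural under orientation-preserving diffeomorphisms $f\colon\Sigma\to\Sigma$, since each is built functorially from the oriented surface together with the fixed data $(G,\alpha)$ and uses $\Sigma$ only through its homotopy type and fundamental class, both preserved by $f$. Hence $f$ induces compatible automorphisms of $\cL_\cG$ and of $\FQ$, and because the canonical isomorphism above is assembled from these natural constructions it intertwines the two actions; passing to isotopy classes yields the asserted $\MCG(\Sigma)$-equivariance. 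The only subtlety is that the comparison of $\chi^{\cG}$ with $\tau(\alpha)$ was made using auxiliary simplicial data, so I would either phrase that comparison in a manifestly diffeomorphism-natural way using the evaluation map directly, or check that transporting the auxiliary data along $f$ changes both sides by the same coherent isomorphism.
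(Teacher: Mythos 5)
Your overall strategy---trivialize both bundles over a common presentation of $\Bun_G(\Sigma)$, compute the resulting $\U(1)$-valued classifying data for each, and match them---is the same as the paper's, and your identification of the main technical obstacle (matching the combinatorial formula coming from the associator $\alpha$ with the simplicial/transgression formula, with all orientation and ordering conventions tracked) is accurate. The paper does exactly this, working over the triangulation cover $\Delta(\Sigma,G)$, factoring every morphism into a refinement of triangulations, a gauge transformation $h$, and a diffeomorphism, and checking equality of cocycles on each piece; the gauge-transformation case is handled by building a 2-cochain $\Gamma$ on the mapping cylinder with $d\Gamma=\varphi_{\tilde f}^*\alpha$ and applying Stokes' theorem, which is the rigorous version of your ``transgression over $[\Sigma]$'' step.

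There is, however, a genuine gap in how you propose to organize the computation and in the equivariance argument. First, reducing to characters $\chi_P\colon\Aut(P)\to\U(1)$ means working in the skeletal presentation $\coprod_{[P]}\pt\sq\Aut(P)$, and the paper explicitly observes that this presentation is \emph{not} compatible with the $\Diff(\Sigma)$-action: a choice of representatives of isomorphism classes is not preserved by pullback along diffeomorphisms, so the characters $\chi_P$ do not see the equivariant structure at all. An equivariant line bundle is classified by a cocycle on the quotient groupoid $\Bun_G(\Sigma)\sq\Diff(\Sigma)$, which contains strictly more data than the non-equivariant class in $\rmH^1(\Bun_G(\Sigma);\U(1))$; indeed the extra data is precisely the characters $\Stab_{\pi_0\Diff(\Sigma)}(\rhohol)\to\U(1)$ that produce the Klein forms in the application, so it cannot be discarded. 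Second, and relatedly, your appeal to ``manifest naturality'' to get equivariance is too quick: the isomorphism $\cL_\cG\simeq\FQ$ is constructed relative to chosen trivializing sections (for $\sPG$, the normalization $\langle\Psi_{\Sigma_\tri},\gamma_{\tri,\rho}\rangle=1$; for $\FQ$, the tautological trivialization), and one must verify that the two equivariance cocycles $R_{\Diff(\Sigma)}$ measuring the failure of these sections to be $\Diff(\Sigma)$-invariant agree. This is a separate, nontrivial computation---in the paper it is the lemma treating morphisms of the form $((\id_\tri,1_G),f)$, which requires exhibiting an explicit primitive $\Gamma=p^*\gamma_{\tri,\rho}$ on the mapping cylinder $\Cyl(f)$ of an actual diffeomorphism and checking $d\Gamma=\varphi_{\tilde f}^*\alpha$ on both pieces of its open cover. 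To close the gap you would need to replace the skeletal presentation with a $\Diff(\Sigma)$-equivariant cover (\v{C}ech, triangulation, or holonomy) and carry out this third comparison explicitly rather than inferring it from naturality.
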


This result fits into a larger goal---initiated by Stolz and Teichner~\cite[\S5]{ST04}---to uncover interrelations between Chern--Simons theory, string structures, and equivariant elliptic cohomology. The finite group setting of this paper permits an entirely explicit investigation of structures that are expected to persist in the general case of a compact Lie group. We comment on this further in~\S\ref{sec:stringstructures} below.

\begin{rmk}\label{rmk:flatcG}
Throughout the paper, $\U(1)$ in~\eqref{Eq:catextension} is taken with the discrete topology, and hence the resulting $\cG$-bundles are always \emph{flat} $\cG$-bundles. 
\end{rmk}


As an application of Theorem~\ref{thm:main}, we connect the theory of 2-group bundles with the theory of Klein forms, a type of modular form with level structure~\cite[Chapter~XV]{Lang}. To set this up, consider a complex line bundle $L\to \Sigma$ of order $n$, i.e., $L^{\otimes n}\simeq\underline{\C}$ is trivialized. Hence, $L$ is classified by a map 
$$
\Sigma\xrightarrow{L} \pt\sq (\Z/n\Z)\hookrightarrow \pt\sq \U(1)
$$
where the second arrow is induced by including $\Z/n\Z\subset \U(1)$ as the $n$th roots of unity. Identifying $\U(1)\simeq \Spin(2)$, the string 2-group $\String(2)$ constructed by Schommer-Pries~\cite{SP11} determines a categorical central extension $\mathcal{Z}_n$ of $\Z/n\Z$ by pullback,
\beq\label{eq:finitestring}
\begin{tikzpicture}[baseline=(basepoint)];
\node (AA) at (-3,0) {$\pt\sq \U(1)$};
\node (CC) at (-3,-1) {$\pt\sq \U(1)$};
\node (A) at (0,0) {$\cZ_n$};
\node (B) at (3,0) {$\Z/n\Z$};
\node (C) at (0,-1) {$\String(2)$};
\node (D) at (3,-1) {$\U(1).$};
\draw[->] (A) to (B);
\draw[->] (B) to node [right] {$\mu_n$} (D);
\draw[->] (A) to (C);
\draw[->] (AA) to (A);
\draw[->] (CC) to (C);
\draw[->] (AA) to node [left] {$=$} (CC);
\draw[->] (C) to (D);
\path (0,-.75) coordinate (basepoint);
\end{tikzpicture}
\eeq
We refer to \cite[\S1]{BCMNP} for further discussion. Next we fix $\Sigma$ to be a genus~1 surface. There is a forgetful functor
\beq\label{eq:forgetcmplx}
\Bun_G(\mathcal{E})\to \Bun_G(\Sigma)
\eeq
from the moduli space of $G$-bundles over the universal elliptic curve $\mathcal{E}$ to the moduli space of $G$-bundle over a genus~1 surface. The functor~\eqref{eq:forgetcmplx} forgets from the complex analytic group structure on an elliptic curve to its underlying oriented manifold.

\begin{thm} \label{thm:Klein}
For $\cZ_n$ as in~\eqref{eq:finitestring}, the pullback of $\cL_{\cZ_n}$ along~\eqref{eq:forgetcmplx} is the line bundle whose sections are Klein forms. 
\end{thm}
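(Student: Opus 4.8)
The plan is to make both sides of the claimed identity completely explicit over the universal elliptic curve and then match the resulting automorphy data. First I would unwind the source of the forgetful functor~\eqref{eq:forgetcmplx}: writing $E_\tau = \C/(\Z+\Z\tau)$ for $\tau$ in the upper half-plane $\mathbb{H}$, the stack $\Bun_{\Z/n\Z}(\mathcal{E})$ fibers over $\mathcal{M}_{1,1}=\mathbb{H}\gmod\SL_2(\Z)$ with fiber over $\tau$ the groupoid of flat $\Z/n\Z$-bundles on $E_\tau$. Since $\Z/n\Z$ is abelian, the isomorphism classes of such bundles are the holonomy pairs $\Hom(\pi_1(E_\tau),\Z/n\Z)=(\Z/n\Z)^2$, which I identify with the $n$-torsion $E_\tau[n]=\tfrac1n(\Z+\Z\tau)/(\Z+\Z\tau)$ by sending $(a_1,a_2)$ to $\tfrac1n(a_1+a_2\tau)$. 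Thus a section of the pullback of $\cL_{\cZ_n}$ along~\eqref{eq:forgetcmplx} is a collection of functions $\{f_{(a_1,a_2)}(\tau)\}_{(a_1,a_2)\in(\Z/n\Z)^2}$ on $\mathbb{H}$, and the mapping-class-group equivariance supplied by Theorem~\ref{thm:main} forces a transformation law $f_{(a_1,a_2)}(\gamma\tau)=j(\gamma,\tau)\,\chi_\gamma(a_1,a_2)\,f_{(a_1,a_2)\gamma}(\tau)$ under $\gamma\in\SL_2(\Z)=\MCG(T^2)$, for an automorphy factor $j$ and a phase $\chi$ that remain to be determined.

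Second, I would compute the phase. By Theorem~\ref{thm:main} we have $\cL_{\cZ_n}\simeq\FQ$ for the level $\alpha\in Z^3(B(\Z/n\Z);\U(1))$ obtained by restricting the class of $\String(2)$---a generator of $\rmH^4(B\U(1);\Z)\cong\Z$---along $\mu_n$ in~\eqref{eq:finitestring}; concretely $\alpha$ generates $\rmH^3(B(\Z/n\Z);\U(1))\cong\Z/n\Z$, the standard level of abelian Chern--Simons theory. For this level the Freed--Quinn fibers over the torus carry the usual Heisenberg/Weil structure, so $\chi_\gamma(a_1,a_2)$ is exactly the root of unity produced by transgressing $\alpha$ to the quadratic refinement on $\Bun_{\Z/n\Z}(T^2)$ and restricting to the $n$-torsion. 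I would compute this explicitly and identify it with the multiplier appearing in Klein's transformation formula.

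Third, I would pin down the weight $j(\gamma,\tau)$, which is where the complex-analytic content of~\eqref{eq:forgetcmplx} enters. The flat bundle $\cL_{\cZ_n}$ is a priori topological, but the string $2$-group $\String(2)$ in~\eqref{eq:finitestring} encodes the $c_1^2$-level, whose transgression along the universal elliptic curve is the determinant-of-cohomology line, equivalently the $\sigma$-function (theta) line. Trivializing this line by the Weierstrass sigma function $\sigma(z;\Lambda_\tau)$---which is homogeneous of degree one and carries the quasi-period corrections $\eta$---produces precisely the weight $-1$ factor $j(\gamma,\tau)=(r\tau+s)^{-1}$, where $\gamma=\left(\begin{smallmatrix} p & q\\ r & s\end{smallmatrix}\right)$. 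With both $j$ and $\chi$ in hand, I would recall from \cite[Chapter~XV]{Lang} the definition $\mathfrak{k}_{(a_1,a_2)}(\tau)=\mathfrak{k}\bigl(\tfrac1n(a_1+a_2\tau);\Lambda_\tau\bigr)$ of the Klein forms and their transformation law, and verify termwise that $\{\mathfrak{k}_{(a_1,a_2)}\}$ satisfies exactly the law above, exhibiting the desired isomorphism of line bundles and hence the identification of global sections with Klein forms.

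The hard part will be the third step: since the $2$-group construction is flat, producing the holomorphic weight $-1$ automorphy factor requires carefully tracing how the complex structure on $E_\tau$ enters through~\eqref{eq:forgetcmplx} and recognizing the sigma-function trivialization of the transgressed string line. Matching the root of unity $\chi$ with Klein's explicit $\varepsilon$-multiplier is a secondary but delicate obstacle, since both the Weil-representation phase and Klein's factor are canonical only up to conventions that must be aligned before the termwise comparison can be made.
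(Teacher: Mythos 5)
Your first two steps track the paper's own reduction: via~\eqref{eq:Kleinsetup} the pullback of $\cL_{\cZ_n}$ is determined by characters $\stabhom{\rhohol}\colon\Gamma_1(m)\to\U(1)$ of congruence subgroups, and the content of the theorem is to identify the character for $m=n$ with the multiplier of Klein forms. However, your third step contains a genuine gap. The bundle $\cL_{\cZ_n}$ is flat by construction (Remark~\ref{rmk:flatcG}), and the forgetful functor~\eqref{eq:forgetcmplx} merely changes the base, so its pullback is still a flat, purely topological line bundle determined by those characters. Nothing in the $2$-group construction produces a determinant-of-cohomology line or a Weierstrass $\sigma$-function trivialization, so there is no structure from which to extract the holomorphic weight $-1$ automorphy factor $(r\tau+s)^{-1}$: the step you flag as ``the hard part'' is not merely hard but unavailable inside this framework. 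The paper avoids it by delegating all analytic content to Freed's prior result \cite[Proposition~4.12]{Freed_Det}, which already identifies sections of the determinant line over the moduli of flat $\Z/n\Z$-bundles on spin elliptic curves with Klein forms and records the induced character of $\Gamma_1(n)$ in \cite[Equation~4.14]{Freed_Det}; the spin subtlety is absorbed by passing to the tensor square of Freed's Pfaffian line (Remark~\ref{eq:rmkFreedspin}).

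With the analytic identification quoted, only finite group-theoretic data remain to be matched, and here too the paper proceeds differently from your sketch: rather than invoking a Heisenberg/Weil quadratic refinement (which would still require aligning conventions, as you note), it writes down the explicit normalized cochain~\eqref{eq: explicit gamma} with $d\gammahol_{\rhohol}=\rhohol^*\alpha$ for the concrete cocycle~\eqref{eq:alphaZn}, evaluates the ratio~\eqref{eq: important ratio} directly, and obtains $\stabhom{\rhohol}(A)=e^{2\pi i Nb/n^2}$ for $A\in\Gamma_1(n)$ (Theorem~\ref{thm: cyclic rho example}); this is then checked against Freed's formula under the dictionary $u=1/n$, $v=0$, $s=(1-a)/n$, $t=-b/n$. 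To salvage your route you would either have to reprove Freed's analytic identification from scratch (in particular justify where the weight comes from), or reinterpret ``the line bundle whose sections are Klein forms'' as the flat multiplier-system bundle with the weight factored out, and in either case replace the heuristic computation of the phase $\chi$ by an actual cocycle evaluation such as~\eqref{eq: explicit gamma}.
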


Using analytic arguments, Freed identifies sections of Quillen's determinant line with Klein forms in \cite[Proposition~4.12]{Freed_Det}. Using Witten's description of Chern--Simons theory on a surface as sections of the determinant line, one can indirectly deduce Theorem~\ref{thm:Klein} from Theorem~\ref{thm:main}. However, we take a more concrete approach that computes the transformation properties of sections directly from the cocycle for the 2-group extension~\eqref{Eq:catextension}. These computational techniques can be applied to any finite 2-group, and hence this provides new calculation tools for line bundles over the moduli of elliptic curves constructed from 2-groups. Indeed, the line bundle in Theorem~\ref{thm:Klein}  is a specific case of a more general phenomenon, namely twists for equivariant elliptic cohomology from Chern--Simons theory~\cite{Grojnowski,Devoto,GanterHecke}. 

\begin{prop}\label{prop:Nora}
The pullback of $\cL_\cG$ along~\eqref{eq:forgetcmplx} determines the twisting for Ganter's $\alpha$-twisted $G$-equivariant elliptic cohomology~\cite[\S2]{GanterHecke}. 
\end{prop}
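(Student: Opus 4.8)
The plan is to identify the moduli stacks involved, reduce to the Freed--Quinn bundle via Theorem~\ref{thm:main}, and then match the resulting line bundle with Ganter's twist through an explicit cocycle comparison. First I would recall that a $G$-bundle over the elliptic curve $\mathcal{E}_\tau = \C/(\Z + \tau\Z)$ is classified up to isomorphism by a conjugacy class of commuting pairs $(g,h) \in G \times G$, namely the holonomies around the two standard generators of $\pi_1$. As $\tau$ ranges over the upper half-plane modulo $\SL_2(\Z)$, these pairs assemble into precisely the moduli on which Ganter defines her twist in~\cite[\S2]{GanterHecke}. The first step is to make this identification of moduli precise and to check that the forgetful functor~\eqref{eq:forgetcmplx} sends a bundle over $\mathcal{E}_\tau$ to the associated commuting pair on the underlying genus~1 surface.

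By Theorem~\ref{thm:main} we may replace $\cL_\cG$ by the Freed--Quinn bundle $\FQ$, so the claim reduces to showing that the pullback of $\FQ$ along~\eqref{eq:forgetcmplx} is Ganter's twist. Both line bundles are built from the single cocycle $\alpha$: the fiber of $\FQ$ over a commuting pair is governed by the transgression of $\alpha$ along the torus, and Ganter's twist is defined from the same transgressed data. This yields a fiberwise identification over $\Bun_G(\Sigma)$, and the substance of the proposition is to promote it to an isomorphism of line bundles over $\Bun_G(\mathcal{E})$ that is compatible with the complex-analytic structure.

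The main obstacle is exactly this complex-analytic refinement introduced by~\eqref{eq:forgetcmplx}. One must verify that the automorphy factors and modular transformation law encoded in Ganter's twist coincide with the transformation behavior of $\FQ$ under the genus~1 mapping class group $\SL_2(\Z)$, which is precisely the content of the equivariance clause in Theorem~\ref{thm:main}. Concretely, I would extract the explicit $\U(1)$-valued cocycle for $\cL_\cG$ from the 2-group extension~\eqref{Eq:catextension}---following the same computation that underlies Theorem~\ref{thm:Klein}---and compare it term by term with the cocycle defining Ganter's line bundle. I expect the real work to be bookkeeping rather than conceptual: reconciling the normalization of the transgressed 3-cocycle and fixing the lift of the $\SL_2(\Z)$-action so that the two cocycles agree, not merely up to coboundary. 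Specializing to $G = \Z/n\Z$ and $\cG = \cZ_n$ should recover Theorem~\ref{thm:Klein}, providing a consistency check on the conventions.
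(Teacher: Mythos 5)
Your first move---invoking Theorem~\ref{thm:main} to replace $\cL_\cG$ by the Freed--Quinn bundle $\FQ$---is exactly what the paper does, but the paper then stops, and for good reason: Ganter's $\alpha$-twist in \cite[\S2]{GanterHecke} is \emph{defined} in terms of the Freed--Quinn line bundle over the moduli stack of elliptic curves. Once $\cL_\cG\simeq\FQ$ is established (equivariantly for the mapping class group, which handles the $\SL_2(\Z)$/complex-analytic refinement you worry about), there is no further identification to make; the proposition is a one-line corollary. The second half of your proposal---extracting the $\U(1)$-cocycle from the 2-group extension, matching automorphy factors ``term by term,'' reconciling normalizations---is therefore not needed here, and as written you do not actually carry it out: you describe it as ``bookkeeping'' you \emph{expect} to work, which leaves the step you yourself call ``the substance of the proposition'' unexecuted. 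That explicit cocycle computation is real mathematics, but in this paper it is the content of Theorem~\ref{thm:Klein} (for $\cG=\cZ_n$) and of \S\ref{sec:MCGsetup}--\ref{ex: gammahol example}, where it is used to recover Klein forms and Freed's determinant-line character---not to prove Proposition~\ref{prop:Nora}. So: your approach is not wrong, but it mistakes a definitional observation for a comparison theorem, and in doing so defers the burden of proof to a computation it never performs. The fix is simply to cite how Ganter's twist is constructed.
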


\bp
Ganter's twistings are defined in terms of the Freed--Quinn line bundle over the moduli stack of elliptic curves, so this follows immediately from Theorem~\ref{thm:main}. 
\ep

The construction~\eqref{eq:LGquotient} witnesses Ganter's twisting for equivariant elliptic cohomology as a categorical truncation of the moduli stack of 2-group bundles $\Bun_\cG(\mathcal{E})$ on elliptic curves, compare~\cite{Rezk}. This gives a potential inroad to a (higher) differential geometric counterpart to Lurie's 2-equivariant elliptic cohomology  ~\cite[\S5.5]{Lurie}. A better geometric understanding of 2-equivariance is a crucial step in one proposed approach to a geometric construction of elliptic cohomology and topological modular forms~\cite[\S1.3]{DBEsurvey}. Through the evident connections with Chern--Simons theory, Proposition~\ref{prop:Nora} also resonates with proposed generalization\Etodo{s} of equivariant elliptic cohomology twisted by a 3-dimensional topological field theory, e.g., see \cite{MTCEll,GKMP}.

\subsection{The key players}\label{sec:2gerbetriv}
For completeness, we begin with the classical definition of the moduli of $G$-bundles.

\begin{defn}\label{defn:BunG}
For a finite group $G$ and smooth manifold $X$, let $\Bun_G(X)$ denote the groupoid whose objects are principal $G$-bundles $P\to X$ and morphisms $G$-equivariant maps $\varphi\colon P\to P'$ covering the identity on~$X$. 
\end{defn}

Let $\Diff(X)$ denote the diffeomorphism group of $X$. Then $\Bun_G(X)$ has a  $\Diff(X)$-action via the pullback of $G$-bundles; see Definition~\ref{defn:weakaction} for group actions on categories. Our convention below is that for an oriented surface $\Sigma$, $\Diff(\Sigma)$ is the group of orientation-preserving diffeomorphisms.

A principal $G$-bundle $P\to X$ and 3-cocycle $\alpha\in Z^3(G;\U(1))$ determine maps $P$ and $\alpha$ 
\beq\label{eq:liftingcGbundle}
\begin{tikzpicture}[baseline=(basepoint)];
\node (B) at (0,0) {$X$};
\node (C) at (4,0) {$BG$};
\node (D) at (8,0) {$B^3\U(1)$};
\node (F) at (8,1) {$EB^2\U(1)$};
\node (E) at (4,1) {$B\cG$};
\node (AA) at (4.5,.7) {$\lrcorner$};
\draw[->] (B) to node [below] {$P$} (C);
\draw[->] (C) to node [below] {$\alpha$} (D);
\draw[->] (E) to (C);
\draw[->,dashed] (B) to (E);
\draw[->] (E) to (F);
\draw[->] (F) to (D);
\path (0,.5) coordinate (basepoint);
\end{tikzpicture}
\eeq
where we (somewhat abusively) define $B\mathcal{G}$ as the fibration over $BG$ classified by~$\alpha$, compare~\cite{BaezStevenson}. The diagram~\eqref{eq:liftingcGbundle} suggests that a principal $\cG$-bundle $\cP\to X$ is equivalent to the data of an ordinary $G$-bundle $P\to X$ and a trivialization of the 2-gerbe classified by the pullback of $[\alpha]\in \rmH^3(BG;\U(1))$ along the classifying map for $P$. There is a bit of work required to verify this expectation at the level of the bicategory $\Bun_\cG(X)$, but it is indeed the case~\cite[Theorem 1.2]{BCMNP}; see 
Remark~\ref{rmk:2gerbetriv2}. 

When $X=\Sigma$ is a surface, $\rmH^3(\Sigma;\U(1))\simeq \{1\}$ is the trivial group and hence any $G$-bundle $P\to \Sigma$ admits a lift to a $\cG$-bundle, as expected from~\eqref{eq:liftingcGbundle}. The moduli of such $\cG$-bundles is then the moduli of trivializations of the 2-gerbe determined by $P$ and $\alpha$, which in turn is a torsor over the symmetric monoidal bicategory of gerbes on $\Sigma$. 
In other words, the fibers of the forgetful functor $\pi$ carry a free and transitive action by the symmetric monoidal bicategory $\Gerbe_{\U(1)}(\Sigma)$ of 1-gerbes on $\Sigma$~\cite[Proposition 4.24]{BCMNP},
\beq\label{eq:highercataction}\label{Eq:forgetfulfunctor}
\pi\colon \Bun_{\cG}(\Sigma)\to \Bun_G(\Sigma), \qquad \Bun_{\cG}(\Sigma)\times \Gerbe_{\U(1)}(\Sigma)\to \Bun_{\cG}(\Sigma).
\eeq
Isomorphism classes of gerbes then act on a categorical truncation of $\Bun_\cG(\Sigma)$ via~\eqref{eq:highercataction}. As (flat) gerbes on an oriented surface $\Sigma$ are classified by $\rmH^2(\Sigma;\U(1))\simeq \U(1)$, such a categorical truncation gives a $\U(1)$-bundle on $\Bun_G(\Sigma)$ as a decategorification of~\eqref{eq:highercataction}. This completes the sketch of the construction of the $\U(1)$-bundle $\cP_\cG$ whose associated line bundle is $\cL_\cG$ in Theorem~\ref{thm:main}. The details are carried out in \S\ref{sec: construct P}.

\begin{rmk}
The situation~\eqref{Eq:forgetfulfunctor} is a higher categorical generalization of principal bundles whose structure groups are (ordinary) central extensions. The most common example is the groupoid of $\Spin^c$-structures for an $n$-dimensional bundle viewed as a principal bundle for the central extension,
$$
\U(1)\to \Spin^c(n)\to \SO(n).
$$
The groupoid of $\Spin^c$-structures is then a torsor over the symmetric monoidal category of complex lines: any pair of $\Spin^c$-structures differ by a hermitian line bundle. 
\end{rmk}

Next we review the Freed--Quinn line bundle. For a space $X$, consider the evaluation and projection maps
$$
BG \xleftarrow{\ev} X\times \Map(X,BG)\xrightarrow{\pi} \Map(X,BG).
$$
When $X$ is an oriented $n$-manifold, \emph{transgression} is the map in cohomology gotten from pulling back along evaluation and pushing forward along the projection 
\beq\label{eq:transgression}
\pi_! \circ \ev^* \colon \rmH^{n+k}(BG;\U(1))\to \rmH^k(\Map(X,BG);\U(1)).
\eeq
Freed and Quinn lift the cohomological map~\eqref{eq:transgression} to one at the level of geometric objects depending on~$n$ and~$k$. In particular, for an oriented surface $\Sigma$ and 3-cocycle $\alpha\in Z^3(G;\U(1))$, the pullback of $\alpha$ along evaluation determines a \emph{2-gerbe} which transgresses to a line bundle $\FQ$ on the groupoid $\Bun_G(\Sigma)$. This is compatible with classical transgression: the isomorphism class of $\FQ$ is the transgressed cohomology class
\beq\label{eq:FQclass}
[\FQ]=[\pi_!\circ \ev^*\alpha]\in \rmH^1(\Bun_G(\Sigma);\U(1))
\eeq
where we use that the groupoid $\Bun_G(\Sigma)$ provides one description of $\Map(\Sigma,BG)$. Global sections of $\FQ\to \Bun_G(\Sigma)$ are the value of Chern--Simons theory on the surface $\Sigma$ for the group $G$ and level $[\alpha]\in \rmH^3(BG;\U(1))$. 

\begin{rmk}
We make some technical remarks about how we compare the structures~\eqref{Eq:forgetfulfunctor} with Freed and Quinn's construction~\eqref{eq:transgression}. Freed and Quinn's construction of $\FQ$ relies on a specific presentation of the groupoid $\Bun_G(\Sigma)$ involving a triangulation of $\Sigma$ and a cell structure on~$BG$. We prove Theorem~\ref{thm:main} with a similarly concrete and combinatorial description of $\cG$-bundles. To this end, we express the higher geometric objects~\eqref{eq:highercataction} in terms of explicit \v{C}ech cocycles relative to an open cover of $\Sigma$. By completely general arguments, any groupoid presentation of $\Bun_G(\Sigma)$ leads to a cocycle description for a given line bundle over $\Bun_G(\Sigma)$. Computing the cocycle for $\cL_\cG$ in the presentation used by Freed and Quinn and seeing that it agrees with the cocycle for $\FQ$ gives a direct verification of Theorem~\ref{thm:main}. The key steps in this proof of Theorem~\ref{thm:main} use geometry:  Poincar\'e duality on the oriented surface $\Sigma$ and Stokes' Theorem for 3-manifolds with boundary, see~\S\ref{sec:proof of main thm}. 
One upshot of this approach is that other presentations of $\Bun_G(\Sigma)$ give different cocycle presentations of~$\cL_\cG$. Under~\eqref{eq:isowFQ}, this affords some new perspectives on the Freed--Quinn line $\FQ$, including a completely algebraic description in terms of categorical representation theory, see Theorem~\ref{thm1}. 
\end{rmk}

\subsection{String structures determine trivializations of Chern--Simons theory}\label{sec:stringstructures} 
Theorem~\ref{thm:main} provides a link between the definition of a generalized string structure as a higher principal bundle and the definition in terms of trivializations of Chern--Simons theory from~\cite[Definition 5.3.4]{ST04}, as we now explain. 

For a compact Lie group $G$, fix a smooth categorical central extension~\eqref{Eq:catextension} in the framework of~\cite{SP11}, and let $P\to M$ be a $G$-bundle over a manifold~$M$. This determines the maps $P$ and $\alpha$ in the diagram \beq\label{eq:CSstring}
\begin{tikzpicture}[baseline=(basepoint)];
\node (A) at (-.5,0) {$X$};
\node (B) at (2,0) {$M$};
\node (C) at (4,0) {$BG$};
\node (D) at (6,0) {$B^3\U(1),$};
\node (E) at (4,1) {$B\cG$};
\draw[->] (A) to node [below] {$\phi$} (B);
\draw[->] (B) to node [below] {$P$} (C);
\draw[->] (C) to node [below] {$\alpha$} (D);
\draw[->] (E) to (C);
\draw[->,dotted] (A) to (E);
\draw[->,dashed] (B) to (E);
\path (0,.5) coordinate (basepoint);
\end{tikzpicture}
\eeq
where $X$ is a $d$-manifold with $d\in \{0,1,2,3\}$. For $G=\Spin(n)$ and $\cG=\String(n)$, the dashed arrow $M\to B\cG$ in~\eqref{eq:CSstring} exists when $P\to M$ admits a string structure. It is also instructive to consider the more general situation as above for an arbitrary compact Lie group and categorical extension, following~\cite[\S5.4]{ST04}.

In the setting of~\eqref{eq:CSstring}, Stolz and Teichner define a (generalized) \emph{geometric string structure} as a compatible collection of dotted arrows determining trivializations of Chern--Simons theory on $\phi^*P\to X$ for each $\phi$ \cite[Definition 5.3.4]{ST04}. Specializing to $d=2$, this amounts to a trivialization of a line bundle over the mapping spaces $\Map(\Sigma,M)$ for $X=\Sigma$ a surface. This has been compared in \cite{Bunke} to Waldorf's definition of string structure \cite{Waldorfstring} as a trivialization of the 2-gerbe classified by~$P$ and~$\alpha$. However, so far these definitions have not been directly compared to Schommer--Pries's higher principal bundle definition of string structure~\cite{SP11} or to Stolz and Teichner's original definition as a trivialization of a 3-dimensional topological field theory. We note that the set of isomorphism classes is the same in all these examples, but showing that the various notions of string structure agree would require a lift of this bijection to an equivalence of bicategories.

Theorem~\ref{thm:main} allows us to make such comparisons in the special case that $X=\Sigma$ is a surface, $G$ is finite and the $\cG$-bundle is flat in the sense of Remark~\ref{rmk:flatcG}. In parallel to the discussion after~\eqref{eq:liftingcGbundle}, a lifting of the $G$-bundle $P\to M$ to a $\cG$-bundle is equivalent data to a trivialization of the 2-gerbe classifed by $P$ and $\alpha$. In this case, the line bundle $\cL_{\rm CS}$ that Chern--Simons theory determines over $\Map(\Sigma,M)$ is the pullback of the Freed--Quinn line,
\beq\label{eq:CSlineovermapping}
\begin{tikzpicture}[baseline=(basepoint)];
\node (A) at (0,0) {$\cL_{\rm CS}$};
\node (B) at (3,0) {$\cL_G^\alpha$};
\node (C) at (0,-1) {$\Map(\Sigma,M)$};
\node (D) at (3,-1) {$\Bun_G(\Sigma)$};
\draw[->] (A) to (B);
\draw[->] (B) to (D);
\draw[->] (A) to (C);
\draw[->] (C) to node [below] {$P\circ-$} (D);
\path (0,-.5) coordinate (basepoint);
\end{tikzpicture}
\eeq
where the lower horizontal arrow sends a map $\phi\colon \Sigma\to M$ to the principal bundle $\phi^*P\to \Sigma$.

\begin{cor}\label{cor:CS}
A trivialization of the 2-gerbe on $M$ determined by $P$ and $\alpha$ fixes a trivialization of the Freed--Quinn line over~$\Map(\Sigma,M)$. 
\end{cor}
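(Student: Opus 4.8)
The plan is to interpret a trivialization of the 2-gerbe on $M$ as a $\cG$-bundle lifting $P$, and then to observe that pullback along maps $\phi\colon\Sigma\to M$ turns this single lift into a coherent family of $\cG$-bundle lifts on $\Sigma$, i.e.\ into a section of the $\U(1)$-bundle underlying $\cL_{\rm CS}$. The corollary is then essentially formal, modulo one naturality check.

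First I would invoke the identification recorded in the discussion following~\eqref{eq:liftingcGbundle} and established in~\cite[Theorem 1.2]{BCMNP}: a trivialization of the 2-gerbe on $M$ classified by $P$ and $\alpha$ is the same data as a lift of the $G$-bundle $P\to M$ to a $\cG$-bundle $\cP\to M$, i.e.\ the dashed arrow $M\to B\cG$ in~\eqref{eq:CSstring}. Fix such a $\cP$. Next, for each $\phi\colon\Sigma\to M$ the pullback $\phi^*\cP\to\Sigma$ is a $\cG$-bundle whose underlying $G$-bundle is $\phi^*P$, so its fiberwise-isomorphism class $[\phi^*\cP]$ is a point of the fiber $(\cP_\cG)_{\phi^*P}$ of the $\U(1)$-bundle $\cP_\cG\to\Bun_G(\Sigma)$ of~\eqref{eq:LGquotient}. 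Since the lower horizontal arrow of~\eqref{eq:CSlineovermapping} is exactly $\phi\mapsto\phi^*P$, the assignment $\phi\mapsto[\phi^*\cP]$ defines a section of the pulled-back $\U(1)$-bundle $(P\circ-)^*\cP_\cG$ over $\Map(\Sigma,M)$.

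A section of a $\U(1)$-principal bundle is the same as a trivialization of the associated line bundle, so this section trivializes $(P\circ-)^*\cL_\cG$. Finally, Theorem~\ref{thm:main} gives $\cL_\cG\simeq\FQ$, and the pullback square~\eqref{eq:CSlineovermapping} identifies $(P\circ-)^*\FQ$ with $\cL_{\rm CS}$; transporting the trivialization across these isomorphisms yields the desired trivialization of $\cL_{\rm CS}$ over $\Map(\Sigma,M)$.

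The main obstacle is verifying that $\phi\mapsto[\phi^*\cP]$ is genuinely a well-defined and continuous section of $(P\circ-)^*\cP_\cG$, rather than merely a fiberwise choice of point. Concretely, one must check that pullback of $\cG$-bundles along $\phi$ is compatible both with passing to fiberwise-isomorphism classes and with the $\Gerbe_{\U(1)}(\Sigma)$-action of~\eqref{eq:highercataction} that defines the $\U(1)$-torsor structure on $\cP_\cG$: pulling back $\cP$ and then acting by a gerbe on $\Sigma$ must agree (up to coherent isomorphism) with acting first and then pulling back, so that $[\phi^*\cP]$ varies equivariantly as $\phi$ moves and the resulting family is an honest section and not just a set-theoretic lift. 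Once this naturality of the construction~\eqref{eq:LGquotient} under pullback is established, the remaining steps are formal.
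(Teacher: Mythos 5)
Your proposal is correct and follows essentially the same route as the paper's (very terse) proof: identify the 2-gerbe trivialization with a $\cG$-bundle lifting $P$, pull back along each $\phi$ to obtain a section of the $\U(1)$-bundle underlying the Freed--Quinn line via Theorem~\ref{thm:main}, and observe that such a section is a trivialization over $\Map(\Sigma,M)$. The naturality check you flag at the end is left implicit in the paper, so your version is if anything more careful.
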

\bp
A $\cG$-bundle lifting $P$ determines a compatible family of trivializations of the corresponding 2-gerbe. By Theorem~\ref{thm:main}, this in turn provides a section of the $\U(1)$-bundle whose associated line bundle is the Freed--Quinn line. Such a section is equivalent to a trivialization over $\Map(\Sigma,M)$. 
\ep

\begin{rmk}
The full comparison between flat string structures and Chern--Simons theory of a finite group requires an analysis of a fully-extended 3-dimensional field theory. The value of this theory on a 3-manifold is the Chern--Simons invariant (an element of $\U(1)$). The value of the theory on surfaces is the Freed--Quinn line. The value on 1-manifolds comes from a bundle of categories over the moduli of $G$ bundles on $S^1$, i.e., the adjoint quotient $G\sq G$. This is the twisted Drinfeld double of $G$ as described in \cite[Theorem 17]{willerton08}, and closely related to the twisted K-theory of $G\sq G$. The value on the point is some type of categorified twisted group ring \cite[\S4]{FHLT}. We note that in Stolz and Teichner's framework the value on the point is related to the hyperfinite $III_1$-factor, a type of von Neumann algebra. Many aspects of this framework remains mysterious, though recent progress provides a useful language in which to frame the problem of extending Chern--Simons theory down to points~\cite{Andrebicommutant}. 
\end{rmk}

\begin{rmk}
String structures viewed as trivializations of Chern--Simons theory serve as the basis for Stolz and Teichner's proposed geometric cocycle description of the Thom class in elliptic cohomology.
Hence, a fully-extended compact Lie generalization of Corollary~\ref{cor:CS} is a key step in the Stolz--Teichner program. 
\end{rmk}

\subsection{The geometry of 2-group principal bundles}
To explain some of the techiniques used in proving Theorem~\ref{thm:main}, we sketch a concrete description of the moduli of $\cG$-bundles over a surface~$\Sigma$ and the quotient~\eqref{eq:LGquotient}. For $\Sigma$ connected, we have the standard presentation of ordinary $G$-bundles
\beq\label{eq:holonomypresentation}
\Bun_G(\Sigma)\simeq \Hom(\pi_1\Sigma,G)\sq G\simeq \Fun(\pt\sq \pi_1\Sigma,\pt\sq G)
\eeq
 where in the middle description objects are homomorphisms $\rhohol\colon \pi_1\Sigma\to G$ corresponding to $G$-bundles of the specified holonomy with $G$ acting on such homomorphisms by conjugation. The right-most description in~\eqref{eq:holonomypresentation} considers the groupoid of functors and natural isomorphisms between the single-object groupoids $\pt\sq \pi_1\Sigma$ and $\pt\sq G$. The moduli of $\cG$-bundles enhances~\eqref{eq:holonomypresentation} as
 $$
 \Bun_\cG(\Sigma)\simeq \Hom(\pi_1\Sigma,\cG)\sq \cG \simeq \Fun(\pt\sq \pi_1\Sigma,\pt\sq \cG)
 $$
 where now $\Hom(\pi_1\Sigma,\cG)$ is the collection of monoidal functors, viewing $\pi_1\Sigma$ as a discrete groupoid with monoidal structure from group multiplication. Similarly, $\Fun(\pt\sq \pi_1\Sigma,\pt\sq \cG)$ is the bicategory whose objects are 2-functors
 $\pt\sq \pi_1\Sigma\to \pt\sq \cG$ between single-object bicategories that deloop the respective monoidal categories. Unpacking this, an object of $\Bun_\cG(\Sigma)$ is the data of a monoidal functor $\widehat{\rho}\colon \pi_1\Sigma\to \cG$, which we call a \emph{weak representation} of $\pi_1\Sigma$ valued in $\cG$. We describe $\widehat{\rho}$ in terms of more basic group theoretic data: on objects, $\widehat{\rho}$ determines an ordinary homomorphism $\rhohol$ as in~\eqref{eq:holonomypresentation}, and the data of $\widehat{\rho}$ as a monoidal functor provides a 2-cochain $\gammahol\colon G\times G\to \U(1)$ giving isomorphisms $\gamma(g,h)$ in $\cG$, 
\beq\label{eq:weakrep}
&&\rhohol\colon \pi_1\Sigma\to G={\rm Ob}(\cG),\qquad \gammahol(g,h)\colon \rhohol(gh)\xrightarrow{\sim} \rhohol(g)\rhohol(h), \quad g,h\in \pi_1\Sigma,
\eeq
where the above data satisfy the property $\rhohol^*\alpha=d\gammahol$ as 3-cocycles on $\pi_1\Sigma$. 

The standard presentation of the fundamental group of a genus ${\sf g}$ surface then allows one to express a weak representation $\widehat{\rho}\colon \pi_1\Sigma\to \cG$ in terms of $2{\sf g}$ elements of $G$ corresponding to the image of the generators of $\pi_1\Sigma$ in $\cG$, together with a morphism in $\cG$ that categorifies the relation inherited from $\pi_1\Sigma$, see Figure~\ref{fig:4gons}. For example, for $\Sigma=\mathbb{T}^2$ the torus, a homomorphism $\widehat{\rho}\colon \pi_1\mathbb{T}^2\to \cG$ the image of the generators of $\pi_1\mathbb{T}^2\simeq \Z^2$ are a pair of objects $g,h\in G={\rm Ob}(\cG)$, and $\widehat{\rho}$ further specifies an isomorphism 
\[
\sigma(g,h)\colon g\otimes h\xrightarrow{\sim} h\otimes g,
\]
witnessing the relation in $\pi_1\mathbb{T}^2$ that the generators commute. The datum $\sigma$ can be identified with an element of $\U(1)$, which turns out to be a complete isomorphism invariant of the weak representation $\widehat{\rho}$ lifting a fixed (ordinary) representation $\rho\colon \pi_1\Sigma\to G$. In other words, we obtain a characterization of isomorphism classes along the fibers of the forgetful functor~\eqref{Eq:forgetfulfunctor}.

\begin{thm}[Proposition~\ref{prop:weakrepn} and Remark~\ref{rmk:fundamental cycle}]\label{thm1}
Fixing a representation $\rhohol \colon \pi_1\Sigma\to G$, lifts to weak representations \begin{center}
    \begin{tikzcd}
        & \cG \arrow[d]\\
        \pi_1\Sigma \arrow[r] \arrow[ur, dashed]& G
    \end{tikzcd}
\end{center}
determine isomorphic $\cG$-bundles if and only if the categorified relations $\sigma$ corresponding to the weak representations as indicated in Figure~\ref{fig:4gons} are equal. 

\end{thm}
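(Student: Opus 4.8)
The plan is to translate the statement entirely into the group cohomology of $\pi_1\Sigma$ and then detect isomorphism classes by pairing against the fundamental cycle. Recall from~\eqref{eq:weakrep} that a weak representation lifting a fixed $\rhohol\colon\pi_1\Sigma\to G$ is the data of a $2$-cochain $\gammahol\colon\pi_1\Sigma\times\pi_1\Sigma\to\U(1)$ subject to $d\gammahol=\rhohol^*\alpha$. Since $\rmH^3(\Sigma;\U(1))=0$ such a trivializing cochain exists, and the lifts of $\rhohol$ then form a torsor over the group $Z^2(\pi_1\Sigma;\U(1))$ of $\U(1)$-valued $2$-cocycles: given two lifts $\gammahol,\gammahol'$ the ratio $\gammahol'/\gammahol$ is closed because $d\gammahol=d\gammahol'=\rhohol^*\alpha$. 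First I would record this parametrization precisely, so that comparing lifts of $\rhohol$ becomes comparing the $2$-cocycle $\gammahol'/\gammahol$.

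Next I would identify the morphisms. An isomorphism of the associated $\cG$-bundles covering the identity on $\rhohol$ is a monoidal natural isomorphism between the corresponding monoidal functors $\pt\sq\pi_1\Sigma\to\pt\sq\cG$. As both functors have the same underlying homomorphism $\rhohol$, such a natural isomorphism is a family of automorphisms of the objects $\rhohol(g)$ in $\cG$, i.e.\ a $1$-cochain $\theta\colon\pi_1\Sigma\to\U(1)$, and the monoidal-naturality square unwinds (using the centrality of the $\U(1)$'s in~\eqref{Eq:catextension}) to the single equation $\gammahol'/\gammahol=d\theta$. Hence two lifts of $\rhohol$ determine isomorphic $\cG$-bundles if and only if $\gammahol'/\gammahol$ is a coboundary, i.e.\ $[\gammahol]=[\gammahol']$ in $\rmH^2(\pi_1\Sigma;\U(1))$. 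I would also note that the more general isomorphisms in $\Bun_\cG(\Sigma)$---those covering a nontrivial automorphism of the underlying $G$-bundle---cannot identify any further lifts for the purposes of this statement, since the invariant $\sigma$ takes values in the central $\U(1)$ of~\eqref{Eq:catextension} and is therefore fixed by conjugation; this is the only point where one must be slightly careful about which isomorphisms are allowed.

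It then remains to show that the class $[\gammahol]\in\rmH^2(\pi_1\Sigma;\U(1))$ is detected exactly by $\sigma$. Using the standard $4{\sf g}$-gon presentation of $\Sigma$ (Figure~\ref{fig:4gons}), the categorified relation $\sigma$ is by construction the product of the morphisms $\gammahol(g,h)$ read off around the boundary word of the single $2$-cell, which is precisely the evaluation $\langle\gammahol,[\Sigma]\rangle$ of the cochain $\gammahol$ on the fundamental $2$-cycle of $\Sigma$; this is the content of Remark~\ref{rmk:fundamental cycle}. Because $\partial[\Sigma]=0$, this pairing is unchanged under $\gammahol\mapsto\gammahol\cdot d\theta$, so $\sigma$ depends only on $[\gammahol]$. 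Poincar\'e duality identifies $\rmH^2(\Sigma;\U(1))\simeq\U(1)$ by evaluation on $[\Sigma]$, and since $\Sigma$ is aspherical in positive genus this computes $\rmH^2(\pi_1\Sigma;\U(1))$; consequently $[\gammahol]=[\gammahol']$ if and only if $\langle\gammahol,[\Sigma]\rangle=\langle\gammahol',[\Sigma]\rangle$, that is, $\sigma=\sigma'$. Combining this with the previous paragraph yields the claimed equivalence.

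The main obstacle is the bookkeeping in the second step: verifying that the monoidal-functor data really collapse to the clean coboundary relation $\gammahol'/\gammahol=d\theta$, and confirming that permitting all isomorphisms of $\cG$-bundles (not merely those over the identity) introduces no additional identifications. Once the centrality of $\sigma$ is used to dispatch the latter point, the remainder is the essentially formal observation that a degree-two class over $\U(1)$ on a closed oriented surface is detected by a single period, namely its value on the fundamental cycle.
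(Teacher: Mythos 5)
Your core argument follows the same route as the paper: pass to weak representations via Proposition~\ref{prop:weakrepn}, observe that two lifts of a fixed $\rhohol$ differ by a $2$-cocycle and are isomorphic over $\id_{P_{\rhohol}}$ exactly when that cocycle is exact (this is Corollary~\ref{cor: lifting lemma with refinements} specialized to $h=1_G$), and then detect the class in $\rmH^2(\pi_1\Sigma;\U(1))\simeq \U(1)$ by pairing with the fundamental cycle, which is the categorified relation $\sigma$ of Remark~\ref{rmk:fundamental cycle}. For isomorphisms covering the identity of the underlying $G$-bundle, your second and third paragraphs are correct and match the paper's proof.

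However, the step you yourself flagged as delicate contains a genuine error. You claim that isomorphisms covering a nontrivial automorphism of $P_{\rhohol}$ ``cannot identify any further lifts'' because $\sigma$ lands in the central $\U(1)$ and is therefore fixed by conjugation. This is false. An isomorphism covering the automorphism of $P_{\rhohol}$ determined by $h\in \Stab_G(\rhohol)$ does not act on lifts by conjugating $\sigma$: by Theorem~\ref{thm: lifting lemma} it carries $\gammahol$ to ${}^h\gammahol=\gammahol/\beta$, where $\beta$ is built from the associator $\alpha$ as in~\eqref{eq:betaeq}, so the invariant changes by $\langle \Psi_\Sigma^{\rm grp},\beta\rangle^{-1}$. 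This factor is exactly the cocycle $R(\rhohol,h)=\langle \Psi_\Sigma^{\rm grp}, {}^h\gammahol_{\rhohol}\rangle$ of Lemma~\ref{lem: holonomy cocycle}, i.e., the character of $\Aut(P_{\rhohol})\cong \Stab_G(\rhohol)$ through which the Freed--Quinn line is defined. If these characters were trivial for all $(\rhohol,h)$, as your claim would force, the Freed--Quinn bundle would be trivializable over $\Bun_G(\Sigma)$ for every $(G,\alpha)$, which it is not; its nontriviality is the point of the whole construction. So under the reading ``isomorphic via an arbitrary $1$-morphism of $\Bun_\cG(\Sigma)$,'' the statement itself fails: lifts with different values of $\sigma$ can be identified by isomorphisms covering nontrivial automorphisms. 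The theorem must be read, as the paragraph preceding it in the paper makes explicit, as classifying lifts up to isomorphism \emph{along the fibers} of the forgetful functor $\pi$, i.e., up to isomorphisms covering $\id_{P_{\rhohol}}$. With that reading fixed, your argument goes through; the centrality claim should be deleted rather than repaired.
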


\begin{figure}\label{fig:visual of sigma}
    \centering
\begin{center}
\tikzset{
    arrowMe/.style={
        postaction=decorate,
        decoration={
            markings,
            mark=at position .55 with {\arrow[thick]{#1}}
        }
    }
}
\tikzset{
    arrowMeBig/.style={
        postaction=decorate,
        decoration={
            markings,
            mark=at position .75 with {\arrow[thick]{#1}}
        }
    }
}
\pgfarrowsdeclaredouble{<<s}{>>s}{stealth}{stealth}
\pgfarrowsdeclaretriple{<<<s}{>>>s}{stealth}{stealth}
\pgfarrowsdeclaredouble{<<<<s}{>>>>s}{<<}{>>}
\beq
\begin{tikzpicture}[baseline=(basepoint)];
   \coordinate (A) at (0,0);
   \coordinate (B) at (3,0);
   \coordinate (C) at (4,2);
   \coordinate (D) at (1,2);
   
    \tkzDrawSegments[arrowMe=stealth](B,A C,D);
    \tkzDrawSegments[>=stealth, arrowMe=>>](C,B D,A);
    
    \draw (A) -- node[below]{\tiny $g$} (B);
    \draw (D) -- node[above]{\tiny $g$} (C);
    \draw (B) -- node[right]{\tiny $h$} (C);
    \draw (A) -- node[left]{\tiny $h$} (D); 
    
    \draw[double,thick, -stealth,shorten >= 30pt, shorten <=30pt] (D) -- node[right=3pt]{\tiny $\sigma(g,h)$}(B);
    \path (0,1) coordinate (basepoint);
\end{tikzpicture}\qquad\qquad
\begin{tikzpicture}[baseline=(basepoint),scale=.6, transform shape]
   \coordinate (A) at (2.41, 1);
   \coordinate (B) at (1, 2.41);
   \coordinate (C) at (-1, 2.41);
   \coordinate (D) at (-2.41, 1);  
   \coordinate (E) at (-2.41, -1);
   \coordinate (F) at (-1, -2.41);
   \coordinate (G) at (1, -2.41);
   \coordinate (H) at (2.41, -1);
   
  \coordinate (M) at (1.705, -1.705);
  \coordinate (N) at (-1.705, 1.705);
   
  \tkzDrawSegments[arrowMe=stealth](A,B D,C);
  \draw (A) -- node[above right]{$g_1$} (B);
  \draw (D) -- node[above left]{$g_1$} (C);
  
  \tkzDrawSegments[>=stealth, arrowMe=>>](B,C E,D);
  \draw (B) -- node[above]{$h_1$} (C);
  \draw (E) -- node[left]{$h_1$} (D);
  
  \tkzDrawSegments[>=stealth, arrowMeBig=>>>](E,F H,G);
  \draw (E) -- node[below left]{$g_2$} (F);
  \draw (H) --node[below right]{$g_2$} (G);
  
  \tkzDrawSegments[>=stealth, arrowMeBig=>>>>](F,G A,H);
  \draw (F) -- node[below]{$h_2$} (G);
  \draw (A) -- node[left]{$h_2$} (H);
  
   \draw[double,thick, -stealth ,shorten >= 30pt, shorten <=30pt] (N) -- node[above right]{$\sigma(g_i,h_i)$}(M);
   \path (0,.5) coordinate (basepoint);
\end{tikzpicture}\label{eq:pictures}
\eeq
\end{center}
    \caption{A $\cG$-bundle over a surface $\Sigma$ determines an element of $\U(1)$ witnessing the relation $\prod_{i=1}^{\sf g} [g_i,h_i]\stackrel{\sigma}{\to} 1$.}
    \label{fig:4gons}
\end{figure}
\begin{rmk}
The categorified relation in Theorem~\ref{thm1} takes values in the \emph{torsor} $\U(1)$ rather than the group. 
A choice of fundamental cycle for $\Sigma$ identifies this $\U(1)$-torsor with the standard $\U(1)$, providing a numerical invariant. Theorem~\ref{thm1} follows from the fact that the Poincar\'e duality pairing is perfect. By virtue of coming from a pairing with the fundamental class, this construction can be viewed as a categorical avatar of transgression. 
\end{rmk}

This leads to the following entirely categorical interpretation of the Freed--Quinn line bundle.

\begin{cor}
    The fiber of the Freed--Quinn line bundle at a flat $G$-bundle $\pi_1\Sigma\to G$ consists of 2-commuting data $\sigma$ in Figure~\ref{fig:4gons}. 
    \end{cor}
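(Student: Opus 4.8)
The plan is to read the statement off the main theorem together with the fiberwise description in Theorem~\ref{thm1}. First I would apply Theorem~\ref{thm:main} to replace $\FQ$ by the canonically isomorphic bundle $\cL_\cG=\cP_\cG\times_{\U(1)}\C$ of~\eqref{eq:LGquotient}. Because this is an isomorphism of line bundles over $\Bun_G(\Sigma)$, it identifies the fiber of $\FQ$ at a flat bundle---that is, at a representation $\rho\colon\pi_1\Sigma\to G$---with the fiber of $\cL_\cG$ at $\rho$, namely the complex line associated to the $\U(1)$-torsor $(\cP_\cG)_\rho$.

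Next I would unwind the definition of $\cP_\cG$. By~\eqref{eq:LGquotient}, the fiber $(\cP_\cG)_\rho$ is the set of fiberwise isomorphism classes of $\cG$-bundles lying over $\rho$, equivalently the set of isomorphism classes of weak representations $\widehat\rho\colon\pi_1\Sigma\to\cG$ lifting $\rho$ in the sense of~\eqref{eq:weakrep}. The key step is then Theorem~\ref{thm1}: two such lifts yield isomorphic $\cG$-bundles exactly when their categorified relations $\sigma$ of Figure~\ref{fig:4gons} coincide, and every value of $\sigma$ is realized. Hence $\widehat\rho\mapsto\sigma$ descends to a bijection between $(\cP_\cG)_\rho$ and the collection of 2-commuting data $\sigma$, which establishes the stated identification of the fiber.

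The only point requiring care is matching the torsor structures: the fiber of $\cP_\cG$ is a principal $\U(1)$-torsor, arising from the action of gerbes via~\eqref{eq:highercataction} decategorified along isomorphism classes, and I would check that this $\U(1)$-action corresponds to the natural $\U(1)$-action on the 2-commuting data. As flagged in the remark following Theorem~\ref{thm1}, the $\sigma$'s a priori take values in a $\U(1)$-torsor rather than $\U(1)$ itself, so this comparison is where a choice of fundamental cycle enters; once the two torsor structures are seen to agree, the associated-bundle construction $\times_{\U(1)}\C$ exhibits the fiber of $\FQ$ as the line spanned by the 2-commuting data, completing the argument. I expect this torsor comparison to be the only nontrivial point, since the bijection of underlying sets is immediate from Theorem~\ref{thm1}.
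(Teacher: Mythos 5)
Your proposal is correct and follows exactly the route the paper intends: the corollary is stated without a separate proof precisely because it is the combination of Theorem~\ref{thm:main} (identifying $\FQ$ with $\cL_\cG$) and Theorem~\ref{thm1} (identifying isomorphism classes in the fiber of $\pi$ with the data $\sigma$), which is what you do. Your added care about the $\U(1)$-torsor versus $\U(1)$ distinction and the role of the fundamental cycle matches the remark following Theorem~\ref{thm1} in the paper.
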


\subsection{Acknowledgements} 
This paper began in conversations at the 2019 Mathematics Research Community, ``Geometric Representation Theory and Equivariant Elliptic Cohomology''. We thank the AMS and NSF for supporting this program. We warmly thank Eric Berry, Joseph Rennie, Apurva Nakade, and Emma Phillips, who were collaborators on the intial phase of this project. We also thank Matt Ando, Nora Ganter, Connor Grady, John Huerta, Eugene Lerman, Christopher Schommer-Pries, Nat Stapleton, and Stephan Stolz for stimulating conversations on these ideas. The second author thanks her students, Toby Caouette, Yves Lolelo, and Oc\'eane Perreault for their discussions and insights on this subject.

The first author was partially supported by the National Science Foundation under Grant Number DMS 2340239 and Grant Number DMS 2205835. The second author was partially supported by the Natural Sciences and Engineering Research Council of Canada under Discovery Grant RGPIN-2022-04104 and the Fonds de Recherche du Québec (Nature et Technologie) Research Support for New Academics under Grant 327706. The third author was partially supported by the National Science Foundation under Grant Number DMS 2316646.

\section{Background on (higher) principal bundles}

We begin with a review of ordinary $G$-bundles and 2-group bundles in a language convenient for our intended applications. 

\subsection{Groupoids and Lie groupoids}

A \emph{groupoid} is an essentially small category whose morphisms are all invertible. For a groupoid with objects $C_0$ and morphisms $C_1$, we denote the source and target maps by $s,t \colon C_1 \to C_0$ respectively, and the composition by $c \colon C_1 \times_{s,C_0,t} C_1=C_1 \times_{C_0} C_1 \to C_0$. We sometimes denote the groupoid itself as $\{C_1\rightrightarrows C_0\}$.

\begin{ex}[Action groupoids]\label{ex:actiongrpd}
A right $G$-action on a set $C$ determines the \emph{action groupoid} denoted $C\sq G$, whose objects are the set $C$ and morphisms $C\times G$. We adopt the convention that the source map is the action map and the target map is the projection; composition is determined by group multiplication in $G$.\footnote{This convention results in the cocycle condition for a \v Cech 1-cocycle $\rho$ reading as $\rho(y_1, y_2) \rho(y_2, y_3) = \rho(y_1, y_3)$, rather than $\rho(y_2, y_3)\rho(y_1, y_2) = \rho(y_1, y_3)$. Of course, other conventions work equally well with the corresponding reshuffling of indices.}
\end{ex}

A  \emph{Lie groupoid} is a groupoid object $\{C_1 \rightrightarrows C_0\}$ in smooth manifolds; in particular the source and target maps are required to be surjective submersions so that the fiber product $C_1 \times_{C_0} C_1$ is a smooth manifold. 

\begin{ex}[Smooth action groupoids] For a Lie group $G$ acting smoothly on a manifold $M$, the action groupoid $M \sq G$ from Example~\ref{ex:actiongrpd} is a Lie groupoid. 
\end{ex}
 
\begin{ex}[\v Cech groupoids]
For a surjective submersion $Y\to X$, let $Y^{[n]}$ denote the $n$-fold fibre product $Y \times_X \ldots \times_X Y$. We denote by 
$$
\check{C}(Y):=\{Y^{[2]}\rightrightarrows Y\}
$$
the \v{C}ech groupoid whose source, target, and composition maps are
$$
s(y_1,y_2)=y_2,\quad t(y_1,y_2)=y_1,\quad c(y_1,y_2,y_3)=(y_1,y_2)\circ(y_2,y_3)=(y_1,y_3)
$$
for $(y_1,y_2,y_3) \in Y^{[3]}$.

\end{ex}

\begin{ex}[Discrete groupoids and Lie groupoids]
There is a functor from groupoids to Lie groupoids that views the sets of objects and morphisms of a groupoid as 0-manifolds. The right adjoint to this functor takes the underlying sets corresponding to the manifold of objects and morphisms of a Lie groupoid. 
\end{ex}

A \emph{(smooth) functor} $f\colon X\to Y$ between Lie groupoids is a functor between their underlying categories that is a smooth map on objects $f_0\colon X_0\to Y_0$ and morphisms $f_1\colon X_1\to Y_1$. A \emph{(smooth) natural transformation} $\eta \colon f\Rightarrow g$ between functors is a natural transformation between underlying functors with the property that the map $\eta\colon X_0\to Y_1$ is smooth. 

Given a smooth functor $f\colon X\to Y$ between Lie groupoids, consider the diagrams of smooth manifolds 
\beq\label{eq:essentialequiv}
\begin{tikzpicture}[baseline=(basepoint)];
\node (A) at (0,0) {$Y_1\times_{Y_0}X_0$};
\node (B) at (3,0) {$X_0$};
\node (C) at (0,-1.5) {$Y_1$};
\node (D) at (3,-1.5) {$Y_0$,};
\draw[->] (A) to node [above] {$p_2$} (B);
\draw[->] (B) to node [right] {$f_0$} (D);
\draw[->] (A) to node [left] {$p_1$} (C);
\draw[->] (C) to node [below] {$s$} (D);
\path (0,-.75) coordinate (basepoint);
\end{tikzpicture}\qquad 
\begin{tikzpicture}[baseline=(basepoint)];
\node (A) at (0,0) {$X_1$};
\node (B) at (3,0) {$Y_1$};
\node (C) at (0,-1.5) {$X_0\times X_0$};
\node (D) at (3,-1.5) {$Y_0\times Y_0.$};
\draw[->] (A) to node [above] {$f_1$} (B);
\draw[->] (B) to node [right] {$t\times s$} (D);
\draw[->] (A) to node [left] {$t\times s$} (C);
\draw[->] (C) to node [below] {$f_0\times f_0$} (D);
\path (0,-.75) coordinate (basepoint);
\end{tikzpicture}
\eeq

\begin{defn}[{e.g., \cite[Definition 3.5]{Lerman}}] \label{defn:localequiv}
A smooth functor $f\colon X\to Y$ is an \emph{essential equivalence} if $Y_1\times_{Y_0}X_0\to Y_0$ on the left in~\eqref{eq:essentialequiv} is a surjective submersion, and the diagram on the right in~\eqref{eq:essentialequiv} is a pullback. 
\end{defn}

When $X$ and $Y$ are discrete, the diagrams~\eqref{eq:essentialequiv} correspond to $f$ being essentially surjective and fully faithful.

\subsection{Covers and presentations of groupoids}\label{sec:groupoidprelim}
Let $\sC$ be a groupoid. 
For a set $C_0$ viewed as a discrete category, a \emph{cover} of $\sC$ is an epimorphism $\cc\colon C_0\twoheadrightarrow \sC$. A choice of cover determines a groupoid denoted $\cc^*\sC$ with objects $C_0$ and morphisms the (weak) pullback $C_1=C_0\times_\sC C_0$, whose fiber over a pair of points $(x,y)\in C_0\times C_0$ is the set of morphisms in $\sC$ from $\cc(y)$ to $\cc(x)$ (and in particular, $C_1$ is a set). The source, target, unit and composition come from canonical maps between fiber products,
\beq\label{eq:presentationforcover}
&&s=p_2,t=p_1\colon C_0\times_\sC C_0\to C_0, \quad e=\Delta\colon C_0\to C_0\times_\sC C_0, \quad c=p_{13}\colon C_0\times_\sC C_0\times_\sC C_0\to C_0\times_\sC C_0
\eeq

By construction, there is a canonical equivalence $\cc^*\sC\to \sC$. We refer to $\cc^*\sC$ as the \emph{presentation} of $\sC$ determined by the cover $\cc \colon C_0 \twoheadrightarrow \sC$. 

\begin{defn} \label{defn:weakaction}
For a groupoid $\sC$ and group $\Gamma$, a (right) \emph{$\Gamma$-action on $\sC$} is a functor $\act\colon \sC\times \Gamma\to \sC$ together with natural isomorphisms of functors witnessing 2-commutativity of the diagrams
\beq\label{eq:weakactiondata}
\begin{tikzpicture}[baseline=(basepoint)];
\node (A) at (0,0) {$\sC\times \Gamma\times \Gamma$};
\node (B) at (4,0) {$\sC\times \Gamma$}; 
\node (C) at (0,-1.2) {$\sC\times \Gamma$};
\node (D) at (4,-1.2) {$\sC$};
\draw[->] (A) to node [above] {$\id_{\sC}\times m$} (B);
\draw[->] (A) to node [left] {$\act\times \id_\Gamma$} (C);
\draw[->] (C) to node [above] {$\act$} (D);
\draw[->] (B) to node [right] {$\act$}  (D);
\path (0,-.75) coordinate (basepoint);
\end{tikzpicture}\qquad 
\begin{tikzpicture}[baseline=(basepoint)];
\node (A) at (0,0) {$\sC\times \Gamma$};
\node (B) at (2,0) {$\sC$,};
\node (C) at (1.25,-1.2) {$\sC$};
\draw[->] (A) to node [above] {$\act$} (B);
\draw[->] (C) to node [left] {$\id_\sC\times e$} (A);
\draw[->] (C) to node [right] {$\id$} (B);
\path (0,-.75) coordinate (basepoint);
\end{tikzpicture}
\eeq
where $m$ is the multiplication on $\Gamma$, and $e\colon \pt\to \Gamma$ is the inclusion of the identity element. These data are required to satisfy associator and unitor axioms. A $\Gamma$-action on $\sC$ is \emph{strict} if the natural isomorphisms in~\eqref{eq:weakactiondata} are identities. 
\end{defn}

\begin{defn}
For a group $\Gamma$ acting on a groupoid $\sC$, the \emph{quotient groupoid}, denoted $\sC\sq \Gamma$, has the same objects as $\sC$ and morphisms given by pairs $(f,g)$ for $g\in \Gamma$ and $f\colon y \to x\cdot g$ a morphism in $\sC$. Composition is inherited from composition in $\sC$, group multiplication in $\Gamma$, and the 2-commuting data in the definition of a $\Gamma$-action. 
\end{defn}

\begin{defn}
An \emph{equivariant functor} between two categories $\sC_1, \sC_2$ with $\Gamma$-action is a functor $F \colon \sC_1 \to \sC_2$ together with a natural isomorphism witnessing equivariance. 
\end{defn}

\begin{rmk}
Given a strict $\Gamma$-action on $\sC$ and an equivalence $\sC\to \sC'$, the induced $\Gamma$-action on $\sC'$ typically fails to be strict. With this in mind, given a weak $\Gamma$-action on $\sC$, we can look for a cover of $\sC$ such that the associated presentation carries a strict $\Gamma$-action. 
\end{rmk}

 \begin{prop}\label{prop: equivariant cover}
 Let $\sC$ be a groupoid equipped with an action of a group $\Gamma$. Let $\cc\colon C_0\to \sC$ be an equivariant cover: i.e. we assume that $\Gamma$ acts on the set $C_0$ and the epimorphism $\cc$ is equivariant. Then the associated presentation $\cc^*\sC$ of $\sC$ carries a strict $\Gamma$-action. 
 \end{prop}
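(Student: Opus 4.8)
The plan is to construct the strict $\Gamma$-action on $\cc^*\sC$ by hand: take the strict set-level action on objects, and on morphisms conjugate the weak action on $\sC$ by the equivariance data of the cover. Strictness will then turn out to be a direct consequence of the coherence axioms built into the hypothesis that $\cc$ is equivariant.

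First I would unpack the available data. The weak action $\act$ on $\sC$ carries an associator natural isomorphism $a_{g,h}\colon \act(\act(-,g),h)\Rightarrow\act(-,gh)$ and a unitor $u\colon \act(-,e)\Rightarrow\id_\sC$, satisfying the associator/unitor axioms of Definition~\ref{defn:weakaction}. By hypothesis $\Gamma$ acts on the \emph{set} $C_0$, so this action is automatically strict: $(x\cdot g)\cdot h=x\cdot(gh)$ and $x\cdot e=x$ hold on the nose. Finally, equivariance of $\cc$ supplies a natural isomorphism witnessing $2$-commutativity of the relevant square; since $C_0$ is discrete, this amounts to a choice of isomorphism $\theta_{g,x}\colon \cc(x\cdot g)\xrightarrow{\sim}\cc(x)\cdot g$ in $\sC$ for each $x\in C_0$ and $g\in\Gamma$, carrying no naturality constraint of its own but subject to the coherence axioms relating $\theta$ to $a$, $u$, and the (trivial) associator and unitor of the action on $C_0$.

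Next I would define the candidate action. On objects of $\cc^*\sC$, which are the elements of $C_0$, I use the strict set action. A morphism $y\to x$ in $\cc^*\sC$ is by definition a morphism $\phi\colon\cc(y)\to\cc(x)$ in $\sC$, and I set
\[
\phi\cdot g\;:=\;\theta_{g,x}^{-1}\circ\act(\phi,g)\circ\theta_{g,y}\colon \cc(y\cdot g)\longrightarrow \cc(x\cdot g),
\]
which is exactly a morphism $y\cdot g\to x\cdot g$ in $\cc^*\sC$. That each $(-)\cdot g$ is a functor is immediate: it preserves identities, and for composable $\phi,\psi$ the inner factors $\theta_{g,y}\circ\theta_{g,y}^{-1}$ cancel, so composition is preserved.

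It remains to verify strictness, which is where the work lies. For the unit, the unit-coherence axiom for $\cc$ forces $\theta_{e,x}=u_{\cc(x)}^{-1}$ (using that the action on $C_0$ is strictly unital), and then naturality of $u$ gives $\phi\cdot e=\phi$. For associativity I would expand $(\phi\cdot g)\cdot h$, use functoriality of $\act(-,h)$ to distribute it across the three factors of $\phi\cdot g$, then apply naturality of the associator to rewrite $\act(\act(\phi,g),h)$ as $a_{g,h,\cc(x)}^{-1}\circ\act(\phi,gh)\circ a_{g,h,\cc(y)}$, and finally invoke the associator-coherence axiom for $\cc$. Because the associator on $C_0$ is trivial, this axiom reads $\theta_{gh,x}=a_{g,h,\cc(x)}\circ\act(\theta_{g,x},h)\circ\theta_{h,x\cdot g}$, and substituting it together with its analogue at $y$ collapses the composite precisely to $\theta_{gh,x}^{-1}\circ\act(\phi,gh)\circ\theta_{gh,y}=\phi\cdot(gh)$. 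The main obstacle is purely organizational: one must state the coherence axioms for a weakly equivariant functor in a form that specializes cleanly when the source groupoid is discrete with a strict action, so that they deliver exactly the two identities $\phi\cdot e=\phi$ and $(\phi\cdot g)\cdot h=\phi\cdot(gh)$. The one subtlety to flag is that, since $\theta$ is indexed over the discrete category $C_0$, it carries no naturality condition on its own; all of the rigidity comes from these coherence axioms together with the associator/unitor axioms of the weak action on $\sC$.
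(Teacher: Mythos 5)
Your proposal is correct and follows essentially the same route as the paper: the action on objects is the given set-level action, and the action on a morphism $f\colon \cc(y)\to\cc(x)$ is defined by conjugating $\act(f,g)$ with the equivariance isomorphisms $\cc(y\cdot g)\xrightarrow{\sim}\cc(y)\cdot g$ and $\cc(x)\cdot g\xrightarrow{\sim}\cc(x\cdot g)$, exactly as in the paper's proof. The paper leaves the strictness check as ``easy to check''; your unwinding of it via the coherence axioms for the equivariant functor and the associator/unitor of the weak action is the intended verification.
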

 \bp
 The objects of $\cc^*\sC$ are given by the set $C_0$ with the given $\Gamma$-action. The $\Gamma$-action on morphisms is given by
 \begin{align*}
     (x,y, f \colon \cc(y) \to \cc(x)) \cdot g = (xg, yg, \widetilde{f}), 
 \end{align*}
 where $\widetilde{f}$ is the composition of $fg$ with equivariance isomorphisms:
 \begin{align*}
     \cc(yg) \EquivTo \cc(y)g \xrightarrow{fg} \cc(x)g \EquivTo \cc(xg).
 \end{align*}
 It is easy to check that this gives a strict $\Gamma$-action on $\cc^*\sC$ such that the natural equivalence $\cc^*\sC \to \sC$ is strictly equivariant. 
 \ep

\begin{ex}\label{eg:coprodofgrps}
For any groupoid $\sC$, a choice of one object $x$ in each isomorphism class gives an epimorphism $
\pi_0\sC\twoheadrightarrow \sC$. The associated presentation of $\sC$ is then $\coprod_{[x]\in \pi_0(\sC)} \pt\sq \Aut_\sC(x)$. However, since the choice of representatives $x$ is in general not compatible with the $\Gamma$-action, it is more difficult to describe the $\Gamma$-action on this presentation, and it is not strict in general.
\end{ex}

One can define presentations for Lie groupoids analogously to~\eqref{eq:presentationforcover} and group actions on Lie groupoids analogously to Definition~\ref{defn:weakaction}; we do not require these below so we omit these definitions. 

\subsection{Presentations of $\Bun_G(X)$}
 Let $G$ be a discrete group and $X$ a smooth manifold. We will consider the presentations of $\Bun_G(X)$ determined by three different equivariant covers of $\Bun_G(X)$; each presentation has different advantages. The presentation induced from the cover by \v Cech cocycles will generalize most naturally to our definition of principal $2$-group bundles. The triangulation cover compares most closely with the work of Freed--Quinn. Finally, the holonomy cover will be more conducive to calculations. The ideas of this section are standard; we recall them here to give a uniform description and establish our conventions. 

\begin{ex}[The \v{C}ech cocycle presentation] \label{ex:Cech}
 Let ${\check{C}}^1(X,G)$ denote the collection of pairs $(u,\rho)$ with $u\colon Y\to X$ a surjective submersion and $\rho\colon Y\times_{X} Y\to G$ a $G$-valued 1-cocycle. There is an epimorphism
    \beq\label{eq:epi2}
{\check{C}}^1(X,G)\twoheadrightarrow \Bun_G(X), \quad (u,\rho)\mapsto P_{u,\rho} =(Y \times G) /{\sim_\rho}
    \eeq
where $(y_1, \rho(y_1, y_2)g) \sim_\rho (y_2,g)$ for $(y_1,y_2) \in Y \times_X Y, g \in G$. As every $G$-bundle trivializes on some cover,~\eqref{eq:epi2} is an epimorphism. This gives the presentation
\beq\label{eq:Bungpresent1}
\Bun_G(X)\simeq ({\check{C}}^1(X,G)\times_{\Bun_G(X)} {\check{C}}^1(X,G)\rightrightarrows \check{C}^1(X,G)),
\eeq
which we will refer to as the \emph{\v Cech cocycle presentation}. More precisely, a morphism in this presentation is given by a tuple $$((u_2 \colon Y_2 \to X, \rho_2),(u_1 \colon Y_1 \to X, \rho_1), \varphi \colon P_{u_1, \rho_1} \EquivTo P_{u_2, \rho_2})$$ consisting of two cocycles and an isomorphism between the induced principal bundles. Recall that for any mutual refinement $v= u_1 \circ v_1 = u_2 \circ v_2 \colon Z \to X$ of the covers $u_i \colon Y_1 \to X$ as in 
\beq\label{eq: refinement}
\begin{tikzpicture}[baseline=(basepoint)];
\node (A) at (0,0) {$Z$};
\node (B) at (1.2,0) {$Y_1$}; 
\node (C) at (0,-1.2) {$Y_2$};
\node (D) at (1.2,-1.2) {$X,$};
\draw[->] (A) to node [above] {$v_1$} (B);
\draw[->] (A) to node [left] {$v_2$} (C);
\draw[->] (C) to node [below] {$u_2$} (D);
\draw[->] (B) to node [right] {$u_1$} (D);
\path (0,-.75) coordinate (basepoint);
\end{tikzpicture}
\eeq
and under the canonical equivalences $Z \times G \EquivTo v^*P_{u_i, \rho_i}$ the pullback $v^*\varphi \colon Z \times G \EquivTo Z \times G$ is of the form $(z,g) \mapsto (z, h(z)g)$, for $h \colon Z \to G$ satisfying $v_2^*\rho_2\cdot p_2^* h=p_1^* h\cdot v_1^*\rho_1$. Conversely, given such a mutual refinement $u_1 \circ v_1 = u_2 \circ v_1 \colon Z \to G$ and a function $h\colon Z \to G$ satisfying this condition, we obtain an isomorphism $P_{u_1, \rho_1} \EquivTo P_{u_2, \rho_2}$, which we will denote by $\varphi_{Z, h}$ (or $\varphi_{Z, v_1, v_2, h}$ if the maps $v_i$ are not clear from context). However, two such pairs $(Z, h), (Z', h')$ induce the same isomorphism in $\Bun_G(X)$ if there is a common refinement of $Z$ and $Z'$ over which $h$ and $h'$ pull back to the same function. That is, the morphisms in the presentation of $\Bun_G(X)$ given by \eqref{eq:Bungpresent1} are parametrized by equivalence classes 
\beq\label{eq: cech morphisms}
\left\{\varphi_{Z,h} = [
Y_1 \xleftarrow{v_1} Z \xrightarrow{v_2} Y_2, \ h \colon Z\to G ] \mid \begin{array}{c} u_1 \circ v_1 = u_2 \circ v_2\ {\rm surj.\ sub.} \\  v_2^*\rho_2\cdot p_2^* h=p_1^* h\cdot v_1^*\rho_1\end{array} \right\}
\eeq
where $\varphi_{Z,h}=\varphi_{Z',h'}\colon (u_1, \rho_1) \to (u_2, \rho_2)$ whenever there exists a refinement of $Z$ and $Z'$ on which $h$ and $h'$ become equal. 

\begin{ex}
\label{rmk: pullback gives morphism}
    We observe that refinements of covers are themselves morphisms of $G$-bundles in the \v Cech cocycle presentation: given $(u \colon Y \to X, \rho) \in \check{C}^1(X,G)$ and a refinement $v \colon Z \to Y$, we obtain a morphism $\varphi_{Z, \id_Z, v, h=1} \colon (u, \rho) \to (u \circ v, v^*\rho)$.  
\end{ex}

Finally, we define the action of the group $\Diff(X)$ on $\check{C}^1(X, G)$,
$$
{\check{C}}^1(X,G)\times \Diff(X)\to {\check{C}}^1(X,G),\qquad (u, \rho)\cdot f = (f^{-1} \circ u, \rho).
$$ 
\end{ex}
\begin{prop}\label{prop: cech equivariance}
The epimorphism $\check{C}^1(X,G) \twoheadrightarrow\Bun_G(X)$ is $\Diff(X)$-equivariant, and hence the \v Cech cocycle presentation of $\Bun_G(X)$ carries a strict $\Diff(X)$-action. 
\end{prop}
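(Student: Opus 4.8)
The plan is to verify the two hypotheses of Proposition~\ref{prop: equivariant cover} for $\sC=\Bun_G(X)$, $\Gamma=\Diff(X)$, and $C_0=\check{C}^1(X,G)$: namely that the formula $(u,\rho)\cdot f=(f^{-1}\circ u,\rho)$ defines a strict right $\Diff(X)$-action on the \emph{set} $\check{C}^1(X,G)$, and that the cover $\cc\colon\check{C}^1(X,G)\twoheadrightarrow\Bun_G(X)$ sending $(u,\rho)\mapsto P_{u,\rho}$ is equivariant for this action and the pullback action on $\Bun_G(X)$. Since $\check{C}^1(X,G)$ is discrete, $\cc$ is automatically a functor, and the final clause of the Proposition is then immediate from Proposition~\ref{prop: equivariant cover}.

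First I would check that the given formula is a strict action. As $\rho$ is untouched and composition of covers is strictly associative, $((u,\rho)\cdot f)\cdot g=(g^{-1}\circ f^{-1}\circ u,\rho)=((f\circ g)^{-1}\circ u,\rho)=(u,\rho)\cdot(f\circ g)$ and $(u,\rho)\cdot\id_X=(u,\rho)$, so this is a genuine strict right action of $\Diff(X)$ under composition. This matches the convention that $P\cdot f:=f^*P$ is a right action on $\Bun_G(X)$, using the canonical identifications $(f\circ g)^*\cong g^*f^*$.

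The heart of the argument is the equivariance of $\cc$, which amounts to producing a natural isomorphism with components $\Phi_{(u,\rho),f}\colon P_{f^{-1}\circ u,\rho}\EquivTo f^*P_{u,\rho}$. The key observation is that replacing the cover $u\colon Y\to X$ by $f^{-1}\circ u$ changes neither the cocycle $\rho$ nor the equivalence relation $\sim_\rho$, so the total space $(Y\times G)/{\sim_\rho}$ is literally the same manifold; only the projection to $X$ is post-composed with $f^{-1}$. But this is exactly the data recorded by $f^*P_{u,\rho}=\{(x,[y,g])\mid f(x)=u(y)\}$, and one checks that $[y,g]\mapsto(f^{-1}(u(y)),[y,g])$ is a $G$-equivariant bundle isomorphism covering $\id_X$, with the obvious inverse.

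Finally I would verify that the $\Phi_{(u,\rho),f}$ assemble into the natural isomorphism witnessing equivariance. Because $\check{C}^1(X,G)$ is discrete and its action is strict, naturality in the first variable is vacuous, and the only content is the coherence of $\Phi$ with the two action structures: the trivial associator on the source and the associator $(f\circ g)^*\cong g^*f^*$ on the target, together with the unit. I expect this coherence check to be the main (if formal) obstacle: one must track that $\Phi_{(u,\rho)\cdot f,\,g}$ followed by $g^*\Phi_{(u,\rho),f}$ agrees, under $(f\circ g)^*\cong g^*f^*$, with $\Phi_{(u,\rho),f\circ g}$. This holds because every map in sight is the tautological identification of the common total space $(Y\times G)/{\sim_\rho}$, so no nontrivial cocycle manipulation is required. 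With both hypotheses of Proposition~\ref{prop: equivariant cover} in hand, the \v Cech cocycle presentation of $\Bun_G(X)$ inherits a strict $\Diff(X)$-action, completing the proof.
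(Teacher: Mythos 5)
Your proposal is correct and follows essentially the same route as the paper: both rest on the observation that $P_{f^{-1}\circ u,\rho}$ and $P_{u,\rho}$ have literally the same total space with $G$-action (since $\rho$ and the fiber product $Y\times_X Y$ are unchanged), use this tautological identification to produce the isomorphism $P_{f^{-1}\circ u,\rho}\EquivTo f^*P_{u,\rho}$, and then check compatibility with composition of diffeomorphisms before invoking Proposition~\ref{prop: equivariant cover}. The only cosmetic difference is that you write the comparison map explicitly in coordinates while the paper phrases it via the universal property of the pullback.
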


\begin{proof}
Equivariance is additional data, namely a compatible family consisting of, for each $f \in \Diff(X)$ and for each $(u,\rho)\in \check{C}^1(X,G)$, an isomorphism of $G$-bundles $P_{f^{-1} \circ u, \rho} \EquivTo f^*(P_{u, \rho})$, as in the following diagram: 
\beq \label{eq: equivariance pullback diagram}
\begin{tikzpicture}[baseline=(basepoint)];
\node (F) at (-2,.5) {$P_{f^{-1}\circ u,\rho}$};
\node (A) at (0,0) {$f^*(P_{u,\rho})$};
\node (B) at (2,0) {$P_{u,\rho}$}; 
\node (C) at (0,-1.5) {$X$};
\node (D) at (2,-1.5) {$X.$};
\node (E) at (1,-.5) {$\lrcorner$};
\draw[->] (A) to (B);
\draw[->] (A) to (C);
\draw[->] (C) to node [below] {$f$} (D);
\draw[->] (B) to (D);
\draw[->,bend right=20] (F) to (C);
\draw[->,bend left=20] (F) to (B);
\draw[->,dashed] (F) to (A);
\path (0,-.75) coordinate (basepoint);
\end{tikzpicture}
\eeq
The morphism on the left must be the structure map of the bundle $P_{f^{-1} \circ u, \rho}$, so to construct this dashed arrow it suffices to provide the top outer map, which should be a $G$-equivariant map making the outer diagram commute. The resulting dashed arrow will then be a morphism of $G$-bundles, and hence automatically an isomorphism.

We note that the fiber product $Y \times_X Y$ appearing in the equivalence relations defining $P_{u, \rho}$ and $P_{f^{-1}\circ u, \rho}$ in Equation \eqref{eq:epi2} is the same space whether the maps $Y \to X$ are given by $f^{-1}\circ u$ or $u$, differing only in whether the map $Y\times_X Y \to X$ corresponds to $(y_1, y_2) \mapsto f^{-1} \circ u(y_1)$ or $(y_1, y_2) \mapsto u(y_1)$. Hence, $P_{f^{-1} \circ u, \rho}$ and $P_{u, \rho}$ can be canonically identified as spaces with $G$-action. This identification is the desired top outer map. 

Because they are all defined using the universal property of pullbacks, the resulting isomorphisms $P_{f^{-1} \circ u, \rho} \EquivTo f^*(P_{u, \rho})$ are compatible with multiplication in $\Diff(X)$: that is, the composition
\begin{align*}
    P_{f_2^{-1}\circ f_1^{-1} \circ u, \rho} \EquivTo f_2^*(P_{f_1^{-1}\circ u, \rho}) \EquivTo f_2^*f_1^*(P_{u, \rho}) \EquivTo (f_1 \circ f_2)^*(P_{u, \rho})
\end{align*}
agrees with the isomorphism $P_{(f_1 \circ f_2)^{-1}, \rho} \EquivTo (f_1 \circ f_2)^*(P_{u, \rho}) $ defined by the version of the diagram \eqref{eq: equivariance pullback diagram} with $f = f_1 \circ f_2$.
\end{proof}

\begin{ex}[The triangulation presentation]\label{ex: triangulation}
Now suppose $X=\Sigma$ is a surface. Let $\Delta(\Sigma,G)$ denote the collection of pairs $(\tri,\rho)$ where $\tri$ is a triangulation of $\Sigma$, $u_\tri\colon Y_\tri\to \Sigma$ is the open cover coming from taking the star of each vertex of the triangulation $\tri$, and $\rho$ is a cocycle $\rho\colon Y_\tri\times_\Sigma Y_\tri\to G$. Using that these triangulation covers are good, specializing~\eqref{eq:epi2} we have the epimorphism
    \beq\label{eq:epi3}
\Delta(\Sigma,G)\twoheadrightarrow \Bun_G(\Sigma), \quad (\tri,\rho)\mapsto P_{u_\tri,\rho}. 
    \eeq
Let us describe the morphisms in the associated presentation of $\Bun_G(\Sigma)$, which we will refer to as the \emph{triangulation presentation}. The description is similar to that of the morphisms in the \v Cech cocycle presentation, but under the equivalence relation on morphisms in ~\eqref{eq: cech morphisms}, we can assume that the mutual refinement of covers corresponds to a mutual refinement $T$ of the underlying triangulations $\tri_1$ and $\tri_2$, giving giving $v_{T,i} \colon Y_T \to Y_{\tri_i}, i = 1,2$. Hence, we have
$$
\Delta(\Sigma,G)\times_{\Bun_G(\Sigma)}\Delta(\Sigma,G)\simeq \{ [T, h\colon Z_T\to G]  \mid (v_T)_2^*\rho_2\cdot p_2^*h=p_1^*h\cdot (v_T)_1^*\rho_1\},
$$
where $[T, h] = [T', h']$ if there exists a mutual refinement $T''$ of the triangulations $T$ and $T'$ such that $h$ and $h'$ are equal when pulled back to the associated open cover $Y_{T''}$.

There is a natural action of $\Diff(\Sigma)$ on $\Delta(\Sigma, G)$, defined as follows. Given a triangulation $\tri$ and a diffeomorphism $f\colon \Sigma\to \Sigma$, we obtain a new triangulation $f^{-1}(\tri)$ (where the inverse is to give a right action by $\Diff(\Sigma)$). The map $f$ itself determines a diffeomorphism of open covers $f_\tri\colon Y_{f^{-1}(\tri)}\to Y_{\tri}$ covering~$f$. Then we define 
\beq\label{Eq:triangulationDiff}
&&\Delta(\Sigma,G)\times \Diff(\Sigma)\to \Delta(\Sigma,G),\qquad     (\tri, \rho) \cdot f = (f^{-1}(\tri), \rho \circ (f_\tri \times f_\tri))=(f^{-1}(\tri), f_\tri^*\rho).
\eeq
\end{ex}

\begin{prop}\label{prop: triangulation equivariance}
The epimorphism $\Delta(\Sigma, G) \twoheadrightarrow\Bun_G(\Sigma)$ is $\Diff(\Sigma)$-equivariant, and hence the triangulation presentation of $\Bun_G(\Sigma)$ carries a strict $\Diff(\Sigma)$-action. 
\end{prop}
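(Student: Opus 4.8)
The plan is to follow the same strategy as the proof of Proposition~\ref{prop: cech equivariance}, exploiting that the triangulation presentation is the restriction of the \v Cech cocycle presentation along the inclusion $\Delta(\Sigma,G)\hookrightarrow \check C^1(\Sigma,G)$, $(\tri,\rho)\mapsto (u_\tri,\rho)$. Producing the equivariance data amounts to exhibiting, for each $f\in\Diff(\Sigma)$ and each $(\tri,\rho)\in\Delta(\Sigma,G)$, an isomorphism of $G$-bundles $P_{f^{-1}(\tri),\,f_\tri^*\rho}\EquivTo f^*P_{u_\tri,\rho}$ that is natural and compatible with the group law of $\Diff(\Sigma)$. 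The one genuinely new feature compared to the \v Cech case is that here $f$ actually changes the cover---replacing the star cover $u_\tri\colon Y_\tri\to\Sigma$ by the star cover $u_{f^{-1}(\tri)}\colon Y_{f^{-1}(\tri)}\to\Sigma$ of the pulled-back triangulation---and pulls the cocycle back along the induced diffeomorphism of covers, rather than merely reinterpreting the structure map to $\Sigma$.

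First I would record the basic compatibility of covers: the diffeomorphism $f_\tri\colon Y_{f^{-1}(\tri)}\to Y_\tri$ covers $f$, i.e. $u_\tri\circ f_\tri=f\circ u_{f^{-1}(\tri)}$, equivalently $u_{f^{-1}(\tri)}=f^{-1}\circ u_\tri\circ f_\tri$. Since $f_\tri^*\rho=\rho\circ(f_\tri\times f_\tri)$, the pair $(u_{f^{-1}(\tri)},f_\tri^*\rho)$ is the pullback of the \v Cech datum $(f^{-1}\circ u_\tri,\rho)$ along the diffeomorphism of covers $f_\tri$, so by the mechanism of Example~\ref{rmk: pullback gives morphism} there is a canonical isomorphism $P_{f^{-1}(\tri),\,f_\tri^*\rho}\EquivTo P_{f^{-1}\circ u_\tri,\rho}$. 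Post-composing with the \v Cech equivariance isomorphism $P_{f^{-1}\circ u_\tri,\rho}\EquivTo f^*P_{u_\tri,\rho}$ supplied by Proposition~\ref{prop: cech equivariance} yields the desired datum, which unwinds to the explicit formula
\[
P_{f^{-1}(\tri),\,f_\tri^*\rho}\EquivTo f^*P_{u_\tri,\rho},\qquad [y,g]\mapsto\big(u_{f^{-1}(\tri)}(y),\,[f_\tri(y),g]\big),
\]
well defined because $f_\tri^*\rho(y_1,y_2)=\rho(f_\tri(y_1),f_\tri(y_2))$ carries $\sim_{f_\tri^*\rho}$ to $\sim_\rho$.

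Next I would check compatibility with composition in $\Diff(\Sigma)$, the analogue of the final paragraph of the proof of Proposition~\ref{prop: cech equivariance}. This reduces to two strict identities at the level of the set $\Delta(\Sigma,G)$: that $f\mapsto f^{-1}(\tri)$ is a right action on triangulations, $(f_1f_2)^{-1}(\tri)=f_2^{-1}(f_1^{-1}(\tri))$, and that the assignment of the diffeomorphism of star covers is functorial, $(f_1f_2)_\tri=(f_1)_\tri\circ (f_2)_{f_1^{-1}(\tri)}$. Both hold on the nose because taking the star of each vertex is a combinatorial operation carried functorially by diffeomorphisms, so the $\Diff(\Sigma)$-action on the set $\Delta(\Sigma,G)$ defined in~\eqref{Eq:triangulationDiff} is strict. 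Granting these, the agreement of the two chains of isomorphisms over $f_2^*f_1^*$ versus $(f_1f_2)^*$ follows from the universal property of pullbacks exactly as in the \v Cech case.

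Finally, with equivariance of the epimorphism $\Delta(\Sigma,G)\twoheadrightarrow\Bun_G(\Sigma)$ in hand, the ``hence'' clause is immediate: $\Delta(\Sigma,G)$ is an equivariant cover in the sense of Proposition~\ref{prop: equivariant cover}, so the associated presentation carries a strict $\Diff(\Sigma)$-action. I expect the main obstacle to be not any single hard idea but the bookkeeping in the composition check---ensuring the functoriality $(f_1f_2)_\tri=(f_1)_\tri\circ(f_2)_{f_1^{-1}(\tri)}$ is a \emph{strict} equality and that it threads correctly through the pullback identifications---so that the whole construction is strict rather than merely coherent up to isomorphism.
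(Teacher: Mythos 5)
Your proposal is correct and follows essentially the same route as the paper: identify $(u_{f^{-1}(\tri)},f_\tri^*\rho)$ with the pullback of $(f^{-1}\circ u_\tri,\rho)$ along the refinement $f_\tri$ via Example~\ref{rmk: pullback gives morphism}, compose with the \v Cech equivariance isomorphism from Proposition~\ref{prop: cech equivariance}, and check compatibility with composition of diffeomorphisms. Your explicit formula for the isomorphism and the two strict identities $(f_1f_2)^{-1}(\tri)=f_2^{-1}(f_1^{-1}(\tri))$ and $(f_1f_2)_\tri=(f_1)_\tri\circ(f_2)_{f_1^{-1}(\tri)}$ correctly fill in the step the paper leaves as ``straightforward to check.''
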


\begin{proof}
Analogously to the proof of Proposition \ref{prop: cech equivariance}, we need to provide a compatible family of isomorphisms $P_{u_{f^{-1}(\tri)}, f_\tri^*\rho} \EquivTo f^*(P_{u_\tri, \rho})$ indexed by $(f \in \Diff(\Sigma), (\tri, \rho) \in \Delta(\Sigma, G))$. By Example \ref{rmk: pullback gives morphism}, $(f^{-1} \circ u_{\tri}, \rho)$ is canonically isomorphic to its pullback along the refinement $f_\tri \colon Y_{f^{-1}(\tri)} \to Y_\tri$:
\begin{align*}
    (f^{-1} \circ u_\tri, \rho) \cong (f^{-1} \circ u_\tri \circ f_{\tri}, f_\tri^*\rho)= (u_{f^{-1}(\tri)}, f_\tri^* \rho ).
\end{align*}
We compose the resulting isomorphism of $G$-bundles with the equivariance isomorphism from the proof of Proposition \ref{prop: cech equivariance}, to obtain
\begin{align*}
   P_{u_{f^{-1}(\tri)}, f_\tri^*\rho} \EquivTo P_{f^{-1} \circ u_\tri, \rho} \EquivTo f^*(P_{u_\tri, \rho}), 
\end{align*}
as desired. It is straightforward to check the compatibility of these morphisms with composition of diffeomorphisms. 
\end{proof}

\begin{ex}[The holonomy presentation] \label{ex: hol pres}
Suppose $X$ is a connected manifold with universal cover $\tilde{u} \colon \tilde{X} \to X$. As $G$-bundles are particular instances of covering spaces, all $G$-bundles trivialize when pulled back to~$\tilde{X}$. Hence \eqref{eq:epi2} remains an epimorphism when restricted to pairs $(\tilde{u}, \rho)$. Furthermore, we have an isomorphism 
\beq\label{eq:fiberedproductforuniv}
\tilde{X}\times \pi_1X \cong \tilde{X}\times_X\tilde{X}, \qquad (y, a) \mapsto (y, y\cdot a),
\eeq
which induces a bijection between homomorphisms $\rhohol \colon \pi_1 X \to G$ and (locally constant) cocycles $\rho \colon \tilde{X} \times_X \tilde{X} \to G$ via the formula $\rho(y, y\cdot a) = \rhohol(a), y \in \tilde{X}, a \in \pi_1X$. This provides an embedding 
\begin{align}\label{eq: holonomy-cech embedding}
\iota \colon \Hom(\pi_1X,G)\hookrightarrow \check{C}^1(X,G),
\end{align}
and induces from ~\eqref{eq:epi2} an epimorphism
\begin{align}\label{eq:epihom}
    \Hom(\pi_1X,G)\twoheadrightarrow \Bun_G(X),\qquad \rhohol\mapsto P_{\rhohol} = P_{\tilde{u}, \rho},
\end{align}
for $(\tilde u,\rho) = \iota ({\rhohol})$. We note that the equivalence relation $\sim_{\rho}$ on $\tilde{X} \times G$ used in defining $P_{\tilde{u}, \rho}$ \eqref{eq:epi2} can be formulated in terms of ${\rhohol}$: $(y\cdot a, g) \sim_\rho (y, {\rhohol}(a)g)$ for $y \in \tilde{X}, a \in \pi_1 X, g \in G$.  

Let us compute the fiber product of~\eqref{eq:epihom} over itself, in order to describe the morphisms in the associated presentation of $\Bun_G(\Sigma)$, the \emph{holonomy presentation}. We have
\beq\label{eq:BunGfibprod}
\Hom(\pi_1X,G)\times_{\Bun_G(X)} \Hom(\pi_1X,G)\simeq \Hom(\pi_1X,G)\times G,
\eeq
using that an isomorphism $\varphi\colon P_{\rhohol}\to P_{\rhohol'}$ of $G$-bundles is determined by $g\in G$ with $g\rhohol g^{-1}=\rhohol'$, e.g., by viewing $g\in G$ as an automorphism of the trivial $G$-bundle $\tilde{X}\times G$. This affords the equivalence
\beq\label{eq:homgrpddescription}
\iota \colon  \Hom(\pi_1X,G)\sq G \to \Bun_G(X)
\eeq
for the conjugation $G$-action on homomorphisms. In turn, we can identify the action groupoid $\Hom(\pi_1 X, G)\sq G$ with the groupoid of functors $\Fun(* \sq \pi_1X, * \sq G)$.

The action of $\Diff(X)$ on $\Hom(\pi_1X,G)$ is given by precomposing with the isomorphism $f_*\colon \pi_1 X\to \pi_1X$ associated to a diffeomorphism $f\colon X\to X$. As $\pi_1X$ is discrete, this action factors through $\pi_0\Diff(X)$, the mapping class group. 
\end{ex}

We caution that the inclusion of sets $\Hom(\pi_1X,G)\hookrightarrow \check{C}^1(X,G)$ is not $\Diff(X)$-equivariant; however, it will follow from Proposition \ref{prop: cech equivariance} above and Proposition \ref{prop: holonomy equivariance} that the two actions of $\Diff(X)$ agree up to isomorphism in $\Bun_G(X)$. 

\begin{prop}\label{prop: holonomy equivariance}
The epimorphism $\Hom(\pi_1X, G) \twoheadrightarrow \Bun_G(X)$ is $\Diff(X)$-equivariant, and hence the holonomy presentation of $\Bun_G(X)$ carries a strict $\Diff(X)$-action. 
\end{prop}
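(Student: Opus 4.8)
The plan is to verify the hypotheses of Proposition~\ref{prop: equivariant cover}: the group $\Diff(X)$ already acts on the set $\Hom(\pi_1X,G)$ by precomposition with $f_*$, so it remains only to promote the epimorphism~\eqref{eq:epihom} to an equivariant functor. Unwinding the definition of an equivariant functor, this amounts to producing, for each $f\in\Diff(X)$ and each $\rhohol\in\Hom(\pi_1X,G)$, an isomorphism of $G$-bundles $P_{\rhohol\circ f_*}\EquivTo f^*P_{\rhohol}$, compatibly with composition in $\Diff(X)$. Rather than construct these by hand, I would leverage the embedding $\iota$ of~\eqref{eq: holonomy-cech embedding} together with the already-established \v{C}ech equivariance of Proposition~\ref{prop: cech equivariance}.

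Concretely, write $\iota(\rhohol)=(\tilde u,\rho)$, where $\tilde u\colon\tilde X\to X$ is the universal cover and $\rho(z,z\cdot b)=\rhohol(b)$ under~\eqref{eq:fiberedproductforuniv}. On the one hand, Proposition~\ref{prop: cech equivariance} supplies a canonical isomorphism $f^*P_{\rhohol}\cong P_{(f^{-1}\circ\tilde u,\rho)}$, i.e.\ $f^*P_\rhohol\cong P_{\iota(\rhohol)\cdot f}$. On the other hand, $P_{\rhohol\circ f_*}=P_{(\tilde u,\rho')}$ with $\rho'(y,y\cdot a)=\rhohol(f_*a)$. The crux is to compare the two \v{C}ech cocycles $(\tilde u,\rho')$ and $(f^{-1}\circ\tilde u,\rho)$. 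For this I would fix a lift $\tilde f\colon\tilde X\to\tilde X$ of $f$ to the universal cover, so that $\tilde u\circ\tilde f=f\circ\tilde u$ and $\tilde f(y\cdot a)=\tilde f(y)\cdot f_*a$. Then $f^{-1}\circ\tilde u\circ\tilde f=\tilde u$ exhibits $\tilde f$ as a refinement of the cover $\tilde u$ over the cover $f^{-1}\circ\tilde u$, and a direct computation gives $\tilde f^*\rho=\rho'$, since $(\tilde f^*\rho)(y,y\cdot a)=\rho(\tilde f(y),\tilde f(y)\cdot f_*a)=\rhohol(f_*a)$. By Example~\ref{rmk: pullback gives morphism} this refinement induces a canonical isomorphism $P_{(f^{-1}\circ\tilde u,\rho)}\EquivTo P_{(\tilde u,\rho')}=P_{\rhohol\circ f_*}$; composing its inverse with the \v{C}ech equivariance isomorphism yields the desired $P_{\rhohol\circ f_*}\EquivTo f^*P_\rhohol$.

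It remains to check these isomorphisms are compatible with composition in $\Diff(X)$, so that Proposition~\ref{prop: equivariant cover} applies and produces the strict action on the holonomy presentation. I expect this to be the main obstacle, precisely for the reason flagged before the statement: the embedding $\iota$ is not $\Diff(X)$-equivariant, and the comparison isomorphism above depends on the auxiliary choice of lift $\tilde f$. Lifts compose only up to a deck transformation, so $f\mapsto\tilde f$ is not literally multiplicative; this is the same ambiguity that makes $f_*$ well defined only up to inner automorphism. I would resolve this by working with the fixed assignment $f\mapsto f_*$ underlying the action on $\Hom(\pi_1X,G)$ (which makes that action strict by construction) and choosing the lifts $\tilde f$ to realize these automorphisms. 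With these choices the composite refinement for $f_1\circ f_2$ agrees with the successive refinements for $f_1$ and $f_2$---both being refinement morphisms with trivial transition function $h=1$ in the sense of~\eqref{eq: cech morphisms}---so the compatibility reduces, exactly as in the proofs of Propositions~\ref{prop: cech equivariance} and~\ref{prop: triangulation equivariance}, to the functoriality of pullback. Since the resulting action factors through the mapping class group $\pi_0\Diff(X)$, this bookkeeping is the only genuinely delicate point.
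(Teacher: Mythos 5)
Your proposal is correct and follows essentially the same route as the paper's proof: choose a lift $\tilde f$ of $f$ to the universal cover, view it as a refinement of covers, compute that $f^{-1}\circ\tilde u\circ\tilde f=\tilde u$ and $\tilde f^*\rho=\rhohol\circ f_*$, and compose the resulting canonical isomorphism with the \v{C}ech equivariance isomorphism from Proposition~\ref{prop: cech equivariance}. If anything, you are more explicit than the paper about the one delicate point---that lifts compose only up to deck transformations---which the paper dismisses as a straightforward compatibility check.
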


\begin{proof}
    Analogously to the proofs of Propositions \ref{prop: cech equivariance} and \ref{prop: triangulation equivariance}, we need to provide a compatible family of isomorphisms $P_{{\rhohol}\circ f_*} \EquivTo f^* P_{\rhohol}$ indexed by $(f \in \Diff(X), {\rhohol} \in \Hom(\pi_1X, G))$. Choose a lift $\tilde f \colon \tilde X \to \tilde X$ of $f$, and view it as a refinement of the cover $\tilde X \to X$. Let $\iota({\rhohol}) = (\tilde u, \rho)$, and pull back the cocycle $(f^{-1} \circ \tilde u, {\rho})$ along the refinement $\tilde f$. As in Example \ref{rmk: pullback gives morphism}, we obtain an isomorphism
    \begin{align*}
        (f^{-1} \circ \tilde u, \rho) \cong (f^{-1}\circ \tilde u \circ \tilde f, \tilde f^* \rho) .
    \end{align*}
    We calculate that $f^{-1}\circ \tilde u \circ \tilde f = u$, and that $\tilde f^*\rho$ acts on a point $(y , y\cdot a) \in \tilde X \times_X \tilde X$ (with $y \in \tilde X, a \in \pi_1 X$ by
    \begin{align*}
        \tilde f^* \rho(y , y\cdot a) = \rho (\tilde f(y ), \tilde f(y\cdot a)) = \rho(\tilde f(y) , \tilde f(y)\cdot f_*(a)) = {\rhohol} \circ f_* (a). 
    \end{align*}
    In other words, we have a natural isomorphism $P_{\rhohol \circ f_*(a)} \cong P_{f^{-1} \circ \tilde u, \rho}$, and we compose it with the morphism $P_{f^{-1} \circ \tilde u, \rho} \to f^*(P_{\tilde u, \rho)}=f^*(P_{\rhohol})$ from the proof of Proposition \ref{prop: cech equivariance} to get the desired isomorphism. Again, it is straightforward to check the compatibility of these morphisms with composition of diffeomorphisms. 
\end{proof}

\begin{rmk}[A skeletal presentation] 
We remark that as in Example \ref{eg:coprodofgrps}, a choice of representative of each conjugacy class of homomorphisms $\pi_1 X\to G$ gives a presentation 
$$
\Hom(\pi_1X,G)\sq G\simeq \coprod_{[\rhohol]} \pt\sq \Stab_G(\rhohol)
$$
for the coproduct indexed by the chosen representatives of conjugacy classes, and $\Stab_G(\rhohol)<G$ the stabilizer of $\rhohol$. This presentation is convenient for many applications and is often used throughout the literature, but it is not suitable for our purposes because it is not compatible with the action of $\Diff(X)$.
\end{rmk}

\subsection{2-groups and 2-group bundles}\label{sec:2grp}

\begin{defn} \label{defn:2group}
A (discrete) \emph{2-group} is a monoidal groupoid $(\cG,\otimes,\one)$ where every object is (weakly) $\otimes$-invertible, meaning for every object $x$ there exists an object $x^{-1}$ and isomorphisms $x\otimes x^{-1}\simeq \one\simeq x^{-1}\otimes x.$ A \emph{1-homomorphism} between 2-groups is a lax monoidal functor.
A \emph{2-homomorphism} between 1-homomorphisms is a lax monoidal transformation.
\end{defn}

The collection of 2-groups, 1-homomorphisms, and 2-homomorphisms has the structure of a bicategory. 

\begin{ex}\label{ex:ordinarygroup}
An ordinary group $G$ determines a 2-group whose underlying groupoid has only identity morphisms, with monoidal structure given by the group multiplication on~$G$. A homomorphism of groups is equivalent data to a monoidal functor between the corresponding monoidal categories. Hence, the category of groups and homomorphisms admits a faithful embedding into the bicategory of 2-groups. 
\end{ex}

\begin{ex}\label{ex:ptmodA}
When $G=A$ is abelian, the action groupoid $\pt\sq A$ for the trivial action on $\{*\}$ has a single object and morphisms $A$. It has the structure of a 2-group with monoidal structure determined by multiplication in $A$.
\end{ex}

In her thesis \cite{sinh}, Ho\`ang gives the following classification of 2-groups as a combination of these examples. 
\begin{prop}\label{prop:2grpconstr}
A 2-group $\cG$ is determined up to 1-isomorphism by
\begin{enumerate}
\item a group $G=\pi_0(\cG)$, the set of isomorphism classes of objects in $\cG$ with group structure inherited from the monoidal structure on~$\cG$; 
    \item an abelian group $A=\Aut(1_G)$;
    \item a $G$-action on $A$, $G=\pi_0(\cG) \to \Aut(A)$;
    \item a class $[\alpha]\in \rmH^3(G;A)$.
\end{enumerate}
Note that this can be described in the form of a sequence of 1-homomorphisms~\eqref{eq:catextension}
\beq\label{eq:catextension}
1\to \pt\sq A\xrightarrow{\iota} \cG\xrightarrow{\pi_0} G\to 1,
\eeq
where $G$ is regarded as a 2-group, see Example~\ref{ex:ordinarygroup}. 
\end{prop}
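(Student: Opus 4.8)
The plan is to extract the four invariants directly from $\cG$ and then show that, up to 1-isomorphism, $\cG$ is reconstructed from them via a \emph{skeletal model}, following Ho\`ang's original argument. First I would record the group and module data. The set $\pi_0(\cG)$ of isomorphism classes of objects inherits a multiplication from $\otimes$, and weak invertibility of objects makes every class invertible, so $G := \pi_0(\cG)$ is a group. The automorphism group $A := \Aut_\cG(\one)$ of the unit carries two unital binary operations---categorical composition and $\otimes$---which share the unit $\id_\one$ and obey the interchange law, so by the Eckmann--Hilton argument they coincide and $A$ is abelian. Tensoring with a chosen object $x$ of class $g$ and its weak inverse $x^{-1}$ defines a conjugation map $a\mapsto x\otimes a\otimes x^{-1}$ on $A$; because $A$ is abelian this is independent of the representative $x$ and of the chosen invertibility isomorphisms, yielding a well-defined action $G\to\Aut(A)$.

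Next I would produce the cohomology class. Since every monoidal groupoid is monoidally equivalent to a skeletal one, I may replace $\cG$ by an equivalent skeletal 2-group in which isomorphic objects coincide; the objects are then exactly $G$, the tensor product realizes the group law on objects, and every automorphism group is identified with $A$ compatibly with the action above. After normalizing the unit constraints (so that the unitors become identities, using Mac Lane coherence), the only remaining structure is the associator, which at a triple $(g,h,k)$ is an automorphism of $ghk$, hence an element $\alpha(g,h,k)\in A$. The pentagon axiom for the associator translates verbatim into the twisted 3-cocycle identity $d\alpha=1$ in the complex computing $\rmH^\bullet(G;A)$, the $G$-action entering precisely through the way $g\otimes(-)$ acts on automorphisms; thus $\alpha\in Z^3(G;A)$, and the unit normalization makes it a \emph{normalized} cocycle.

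To see that $[\alpha]$ is the promised invariant I would check independence of choices: a different skeletal model, or a different section $g\mapsto x_g$ together with multiplication isomorphisms $\mu_{g,h}\colon x_g\otimes x_h\to x_{gh}$, changes $\alpha$ by the coboundary of the 2-cochain recording the discrepancy, so $[\alpha]\in\rmH^3(G;A)$ is well-defined. For the converse I would build, from $(G,A,\text{action},[\alpha])$, the skeletal 2-group with object set $G$, automorphisms $A$ at each object, tensor product the group law, and associator $\alpha$; the cocycle condition is exactly the pentagon axiom, so this is a genuine 2-group realizing the data, and two such models are monoidally equivalent precisely when their cocycles are cohomologous. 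This exhibits $(1)$--$(4)$ as a complete invariant and simultaneously identifies the extension sequence~\eqref{eq:catextension}, with $\pt\sq A\to\cG$ the inclusion of automorphisms of the unit and $\cG\to G$ the map $\pi_0$.

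Finally, the main obstacle: the delicate part is the pentagon-to-cocycle bookkeeping together with the proof that $[\alpha]$ depends on no choice. One must track the $G$-action carefully so as to land in \emph{twisted} rather than untwisted cohomology, and the normalization of the unitors requires a coherence argument to guarantee the associator is captured by a normalized cocycle; verifying that rechoosing the section and the $\mu_{g,h}$ alters $\alpha$ by an exact coboundary---and by no more than that---is where essentially all of the content resides.
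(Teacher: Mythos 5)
Your argument is correct and is the standard skeletalization proof: Eckmann--Hilton for the abelianness of $A$, conjugation for the $G$-action, the pentagon axiom yielding a normalized twisted $3$-cocycle, and coboundaries accounting for the choices. The paper itself offers no proof of this proposition---it attributes the classification to Ho\`ang's thesis and to \cite[Theorem 43]{BL04}---and your sketch reproduces exactly the argument given in those cited sources, so there is nothing to compare beyond noting the agreement.
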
 
\begin{rmk}

The above classification also appears as \cite[Theorem 43]{BL04}. For categorical central extensions (where the $G$-action on $A$ is trivial) it also follows from \cite[Theorem 99]{SP11}. 

\end{rmk}

Hereafter, we will assume $A$ is a trivial $G$-module, i.e., the extension~\eqref{eq:catextension} is central. As any class $[\alpha]\in \rmH^3(G;A)$ can be represented by a normalized 3-cocycle, 
$$
\alpha(1_G,g_1,g_2)=\alpha(g_1,1_G,g_2)=\alpha(g_1,g_2,1_G)=1_A, \text{ for all } g_1, g_2\in G,
$$
we shall furthermore assume that $\alpha$ is normalized. 

\begin{convention}\label{notation:GAalpha}
From now on, we let $\cG=\cG(G, A, \alpha)$ be the 2-group associated to a discrete group $G$, trivial $G$-module $A$, and a normalized $A$-valued 3-cocycle~$\alpha$. \end{convention}

\begin{ex}\label{ex: alpha}
For $N\in \N$, the inclusion $\mu_N\colon \Z/N\Z\hookrightarrow S^1$ as the $N$th roots of unity determines 2-group extensions of $\Z/N$ classified by the image of the generator,
\beq\label{eq:CPinfinity}
\Z\simeq \rmH^4(\CP^\infty;\Z) \simeq \rmH^4(B\U(1);\Z)\xrightarrow{\mu_N^*}\rmH^4(B\Z/N\Z;\Z) \simeq \rmH^3(\Z/N\Z;\U(1)).
\eeq
Below we use the explicit 3-cocycle $\alpha\colon \Z/N\Z\times \Z/N\Z\times \Z/N\Z\to \U(1)$ representative of this generator given by\footnote{The second author thanks her student Toby Caouette, who taught her this convenient formula for a representative of the generator during an undergraduate research project.}
\begin{align}\label{eq:Znalpha}
    \alpha(j + N\Z, k + N\Z, l + N\Z) = \left\{ \begin{array}{ll}
       1  & \text{ if }k + l < N \\
       e^{\frac{2j\pi i}{N} }  & \text{ if } k + l \ge N,
    \end{array}\right.  
\end{align}
for representatives $j,k,l \in \{0, 1, \ldots, N-1\}$.
\end{ex}

\begin{ex}
Given a character $\varphi \colon G\to \U(1)$, we obtain a 2-group classified by the pullback of the generator~\eqref{eq:Znalpha} along~$\varphi$. As $\varphi$ factors through a finite subgroup of $G$, in fact this pulls back from an extension $\cG(\Z/|G|\Z,\U(1),\alpha)$ classified by~\eqref{eq:Znalpha}. 
\end{ex}

Just as there are a variety of presentations of $G$-bundles, there are several ways to construct the 2-groupoid of $\cG$-bundles $\Bun_\cG(X)$ for $\cG$ a categorical central extention~\eqref{eq:catextension}. We start from the definition that is most concrete, namely the generalization of the \v{C}ech description of $\Bun_G(X)$ from Example~\ref{ex:Cech}. However, one can show that this is equivalent to various other definitions, including as zig-zags of functors between $X$ and $* \sq \cG$ (Remark \ref{rmk:bundle as functor}), as stacks over $X$ with $\cG$-action (Remark \ref{rmk:otherdefs}), or in terms of weak representations, providing higher holonomy data (Proposition \ref{prop:weakrepn}).

\begin{defn}\label{defn:prinbund1}
Let $\calg=\calg(G, A, \alpha)$ and let $X$ be a manifold. 
    The bicategory $\Bun_\cG(X)$ of flat $\cG$-bundles on $X$ consists of:
    \begin{itemize}
        \item \textbf{objects:} $(u, \rho, \gamma)$, where $u\colon Y\to X$ is a surjective submersion; 
        $\rho\colon Y^{[2]}\to G$ is a locally constant map satisfying the (ordinary) cocycle condition
            \beq \label{eq: condition for rho}
            p_{13}^*\rho = p_{12}^*\rho\cdot p_{23}^*\rho \colon Y^{[3]}\to G
            \eeq
            for $p_{12},p_{23}, p_{13}\colon Y^{[3]}\to Y^{[2]}$ the projections and the composition; 
        and $\gamma\colon Y^{[3]}\to A$ is a locally constant map satisfying the conditions
            \beq\label{eq:highercompatcond2}
            &&\rho^*\alpha=d \gamma\colon Y^{[4]}\to A,\qquad \gamma(y_1, y_2, y_2)=\gamma(y_2,y_2, y_3)=1_A, \label{eq: condition for gamma}
            \eeq
            for all $(y_1, y_2, y_3)\in Y^{[3]}$, where $d$ is the \v{C}ech differential on $A$-valued cochains.

        \item \textbf{1-morphisms:} $(u_1, \rho_1, \gamma_1) \to (u_2, \rho_2, \gamma_2) $ is given by data $(Z, v_1, v_2, h, \eta)$, where $v_i \colon Z \to Y_i, i = 1,2$ are smooth maps such that $u_1 \circ v_1 = u_2 \circ v_2$ are surjective submersions;
        $h\colon Z\to G$ is a locally constant map satisfying 
            \beq\label{eq: h condition}
            v_2^*\rho_2 \cdot p_2^* h =  p_1^* h \cdot v_1^*\rho_1 \colon Z^{[2]} \to G; 
            \eeq 
            for $p_1, p_2\colon Z^{[2]}\to Z$ the target and source maps;
        and $\eta\colon Z^{[2]}\to A$ is a locally constant map satisfying
        \beq\label{eq:eta condition}
         && d\eta (z_1,z_2, z_3) = \frac{v_1^*\gamma_1(z_1,z_2,z_3)}{v_2^*\gamma_2(z_1,z_2,z_3)}\cdot\frac{\alpha(v_2^*\rho_2(z_1,z_2), h(z_2),v_1^*\rho_1(z_2,z_3))}{\alpha(h(z_1),v_1^*\rho_1(z_1,z_2), v_1^*\rho_1(z_2,z_3)) \alpha(v_2^*\rho_2(z_1,z_2), v_2^*\rho_2(z_2,z_3), h(z_3))},
        \eeq
        for all $(z_1,z_2,z_3)\in Z^{[3]}$.

        \item \textbf{2-morphisms:}  Let $(Z_j, v_{1j}, v_{2j}, h_j, \eta_j), j = 3,4,$ be two 1-morphisms between objects $(u_1, \rho_1, \gamma_1)$ and $(u_2, \rho_2, \gamma_2)$. A 2-morphism is given by data $(Z, v_3, v_4, \omega)$, where 
        $v_j \colon Z \to Z_j$ are essential equivalences such that
        $ v_{i3} \circ v_3 = v_{i4} \circ v_4$, $i=1,2$;
        and $\omega\colon Z\to A$ is a locally constant map satisfying 
            \beq\label{eq: omega condition}
            \frac{\omega(z_2)}{\omega(z_1)} = \frac{v_3^*\eta_3(z_1, z_2)}{v_4^*\eta_4(z_1, z_2)}
        \eeq 
        for $(z_1, z_2) \in Z^{[2]}$.
        Note that such a 2-morphism exists only if $v_3^*h_3=v_4^*h_4$.
        
        Two sets of data of the form $(Z, v_3, v_4, \omega)$ represent the same 2-morphism if they agree ``upon refinement,'' that is after being pulled back along compatible essential equivalences.  

    \end{itemize}
Note that all 1-morphisms and 2-morphisms are invertible, i.e., $\Bun_\cG(X)$ is a 2-groupoid.
\end{defn}

\begin{notation}\label{notation:calp}
To make better contact with the geometric ideas, below we use the notation $\calp_{u,\rho,\gamma}$ to denote the object $(u,\rho, \gamma)\in \Bun_{\cG}(X)$, or simply $\calp\in \Bun_{\cG}(X)$ when the data $(u,\rho,\gamma)$ is clear in context. Similarly, we use the notation $\iso_{Z,h,\eta}$ or simply $\varphi$ for the 1-morphism $(Z, v_1, v_2, h,\eta)$ in $\Bun_\cG(X)$. Finally, we use the shorthand notation $\omega\colon \iso\Rightarrow\iso'$ for a 2-morphism $(Z, v_3, v_4, \omega)$ in $\Bun_\cG(X)$. We note that for every $\cG$-bundle $\cP_{u,\rho,\gamma}$, we have an underlying $G$-bundle $P_{u,\rho}$.

\end{notation}

\begin{rmk}\label{rmk:bundle as functor}
  An object in Definition~\ref{defn:prinbund1} is equivalent to a zig-zag~\cite{BCMNP} 
  \beq\label{eq:Gbundasfunctor0}
 X\xleftarrow{\sim} \tilde{Y} \rightarrow \pt\sq \cG
\eeq
for an essential equivalence $\tilde{Y}\xrightarrow{\sim} X$ of Lie groupoids and $\pt\sq \cG$ regarded as a discrete Lie 2-groupoid. Indeed, for $X$ a smooth manifold, any essential equivalence is equivalent to one of the form $\tilde{Y}\to X$, where $\tilde{Y}=\check{C}(Y)$ is the \v Cech groupoid associated to a surjective submersion $u \colon Y\to X$. A functor $\check{C}(Y) \to * \sq \cG$ is exactly the data $(u \colon Y \to X, \rho, \gamma)$, and a natural transformation of two such functors is exactly the data of a 1-morphism of the form $(Y, \id_Y, \id_Y, h, \eta)$. 
The zig-zags~\eqref{eq:Gbundasfunctor0} give \emph{flat} $\cG$-bundles as $\pt\sq\cG$ is a \emph{discrete} Lie 2-groupoid. 
\end{rmk}

\begin{rmk}\label{rmk:otherdefs}
As Notation~\ref{notation:calp} suggests, one can define a 2-group bundle as a stack $\cP\to X$ with $\cG$-action (e.g., see \cite{SP11}). As this paper only treats discrete 2-groups, the full apparatus of $\cG$-stacks is a bit overkill for our intended applications and we stick with the more hands-on definition of $\cG$-bundles above. 
\end{rmk}

\begin{rmk}\label{rmk:2gerbetriv2}
Definition~\ref{defn:prinbund1} can be rephrased in the language of higher differential geometry, following \cite[Theorem 1.4]{BCMNP}. The data $(u,\rho)$ provides a $G$-bundle $P_{u,\rho}\to X$. The 3-cocycle $\alpha\in Z^3(G;\U(1))$ determines a (flat) \emph{2-gerbe} over the stack $\pt\sq G$, and the data of $\gamma$ satisfying~\eqref{eq:highercompatcond2} is a trivialization of this 2-gerbe pulled back to $X\to \pt\sq G$ along the map classifying $P_{u, \rho}$. Trivializations of (flat) 2-gerbes form a bicategory. The 1-morphisms and 2-morphisms in Definition~\ref{defn:prinbund1} can be understood in terms of this bicategory of trivializations of 2-gerbes, but enhanced by pulling back the trivializations along isomorphisms of $G$-bundles $P_{u,\rho}\xrightarrow{\sim} P'_{u',\rho'}$ over $X$. 
\end{rmk}

\begin{ex}[The groupoid of ordinary $G$-bundles]\label{eq:ordinaryG}
When $A=\{e\}$ and $\cG=G$ is a finite discrete group, there is an equivalence
$$
\Bun_\cG(X)\simeq \Bun_G(X)
$$
with the 1-groupoid of ordinary $G$-bundles on $X$. Indeed, $\gamma$ is no additional data, and the data of~$\cP_{u,\rho,\gamma}$ is equivalent to $P_{u,\rho}$. 
\end{ex}

\begin{ex}[$A$-gerbes]\label{ex:Agerbes}
    In the case where $\cG=\pt\sq A$ for $A$ an abelian group, unpacking Definition~\ref{defn:prinbund1} identifies $\BibunGX$ with the standard cocycle description for the bicategory of (flat) $A$-gerbes on $X$,
    $$
    \Gerbe_A(X)\simeq \Bun_{\pt\sq A}(X). 
    $$
    The symmetric monoidal structure of $\Gerbe_A(X)$ given by tensoring $A$-gerbes corresponds to a symmetric monoidal structure on $\Bun_{* \sq A}(X)$ given by multiplying cocycles $\gamma, \gamma'$ (after pulling back to a refinement of covers). 
\end{ex}

\begin{lem}\label{lemma: pi}
There is a (forgetful) 2-functor
\beq\label{eq:functorpi}
\pi\colon \Bun_\cG(X)\to \Bun_G(X),
\eeq
which associates to each $\cG$-bundle $\cP_{u, \rho, \gamma}$ the underlying $G$-bundle $P_{u, \rho}$.
\end{lem}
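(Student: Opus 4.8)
The plan is to define $\pi$ level by level---on objects, $1$-morphisms, and $2$-morphisms---by discarding the $\gamma$, $\eta$, and $\omega$ data, and then to verify that what remains is precisely the \v Cech cocycle presentation of $\Bun_G(X)$ from Example~\ref{ex:Cech}. On objects we set $\pi(\cP_{u,\rho,\gamma}) = P_{u,\rho}$; this is well defined because condition~\eqref{eq: condition for rho} is exactly the ordinary $G$-valued cocycle condition, so $(u,\rho)$ is a legitimate object of the \v Cech presentation. On $1$-morphisms we send $(Z, v_1, v_2, h, \eta)$ to the class $\varphi_{Z,h}$, observing that the intertwining condition~\eqref{eq: h condition} on $h$ coincides verbatim with the condition in~\eqref{eq: cech morphisms} characterizing morphisms of $G$-bundles in the \v Cech presentation.

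The key point is the treatment of $2$-morphisms. Since $\Bun_G(X)$ is an ordinary groupoid---a bicategory whose only $2$-morphisms are identities---a $2$-functor into it must send any pair of $1$-morphisms related by a $2$-morphism to the \emph{same} morphism of $G$-bundles. This is guaranteed by the structure of Definition~\ref{defn:prinbund1}: a $2$-morphism $(Z, v_3, v_4, \omega)$ between $1$-morphisms $(Z_3,\ldots,h_3,\eta_3)$ and $(Z_4,\ldots,h_4,\eta_4)$ exists only when $v_3^*h_3 = v_4^*h_4$. As the maps $v_3, v_4$ exhibit $Z$ as a common refinement compatible with the projections to $Y_1$ and $Y_2$, the equality $v_3^*h_3 = v_4^*h_4$ is precisely the condition under which $\varphi_{Z_3,h_3} = \varphi_{Z_4,h_4}$ in $\Bun_G(X)$, by the equivalence relation on morphisms in~\eqref{eq: cech morphisms}. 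Hence $\pi$ is well defined on $2$-morphisms, sending each to the identity.

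It remains to check that $\pi$ respects composition and identities. The identity $1$-morphism on $\cP_{u,\rho,\gamma}$ has underlying datum $h = 1$, which maps to the identity of $P_{u,\rho}$. For composition, one unwinds the composite of two $1$-morphisms in $\Bun_\cG(X)$---formed over the fiber product of the intermediate covers---and observes that its underlying $h$-datum is the pointwise product of the two intertwiners pulled back to the common refinement, so that $\pi$ of the composite equals the \v Cech composite $\varphi_{Z',h'}\circ\varphi_{Z,h}$ of the images. Because the target bicategory is locally discrete, the compositor and unitor $2$-isomorphisms that a pseudofunctor would carry are forced to be identities, and the coherence conditions reduce to these strict equalities of $1$-morphism images; thus $\pi$ is in fact a strict $2$-functor.

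I expect the main subtlety to be bookkeeping rather than conceptual: verifying that the composition law in $\Bun_\cG(X)$ (defined on the $(\rho,\gamma,h,\eta)$ data over successive refinements of covers) projects cleanly onto the \v Cech composition in $\Bun_G(X)$, and confirming that the refinement-equivalence relations on both sides are compatible so that $\pi$ descends to well-defined classes. The genuinely load-bearing observation---that a $2$-morphism can only connect $1$-morphisms whose underlying $G$-bundle morphisms already agree---is built into the definition via the constraint $v_3^*h_3 = v_4^*h_4$, so once that is highlighted the remaining verifications are routine.
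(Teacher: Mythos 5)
Your proposal is correct and follows essentially the same route as the paper, which only sketches the argument (``$\pi$ extracts the data $(Z,h)$'') and defers the details to \cite[Lemma 4.17]{BCMNP}. You supply exactly those omitted verifications, and your load-bearing observation --- that the constraint $v_3^*h_3 = v_4^*h_4$ in Definition~\ref{defn:prinbund1} forces $2$-morphisms to connect $1$-morphisms with the same underlying $G$-bundle isomorphism under the refinement-equivalence of \eqref{eq: cech morphisms} --- is precisely what makes the $2$-functor well defined into a locally discrete target.
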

\begin{proof}[Proof sketch.]
The value on objects being given above, it remains to specify the value on morphisms: the functor $\pi$ extracts the data $(Z,h)$ in~\eqref{eq: h condition} to construct an isomorphism of $G$-bundles $\varphi_{Z,h}\colon P_{u,\rho}\to P_{u',\rho'}$ via the \v{C}ech 1-cochain $h$ on $X$. We refer to \cite[Lemma 4.17]{BCMNP} for details.
\ep

\begin{rmk}\label{rmk: surfaces}
For a fixed $G$-bundle $P\to X$ and categorical central extension $\cG$ of $G$, it is possible that there are no $\cG$-bundles with underlying $G$-bundle $P$---in other words, the forgetful functor~\eqref{eq:functorpi} need not be essentially surjective. Indeed, there is an obstruction given by the class $[\rho^*\alpha]\in \rmH^3(X;\U(1))$ gotten by pulling back the degree 3 class $[\alpha]\in \rmH^3(\pt\sq G;\U(1))$ along the map $\rho\colon X\to \pt\sq G$ classifying $P$. Following Remark~\ref{rmk:2gerbetriv2}, this is precisely the obstruction to trivializing the 2-gerbe~$\rho^*\alpha$. However, when $X$ is a surface the cohomology group $\rmH^3(X;\U(1))$ vanishes for degree reasons. Hence, for surfaces the functor~\eqref{eq:functorpi} is essentially surjective. 
\end{rmk}

We can also give a higher holonomy description of $\Bun_\cG(X)$, compatible with the holonomy presentation of $\Bun_G(X)$ from Example \ref{ex: hol pres}.
Let $\cG= \cG(G, A, \alpha)$ be as in Notation~\ref{notation:GAalpha}. 

\begin{prop}\label{prop:weakrepn}
For $\Sigma$ a connected closed oriented surface of genus $g\ge 1$, the bicategory of flat $\cG$-bundles on~$\Sigma$ is equivalent to the bicategory of $\cG$-valued (weak) 2-homomorphisms $\pi_1\Sigma\to \cG$ and conjugations 
    \begin{align}\label{eq: holonomy equivalence}
    \BibunGSig \simeq \Bicat(\pt\sq\pi_1\Sigma, \pt\sq\cG).
    \end{align}
    Furthermore, under this equivalence, the forgetful functor $\pi \colon \BibunGSig\to\Bun_G(\Sigma)$, see ~\eqref{eq:functorpi}, becomes the functor that extracts an ordinary homomorphism from a 2-homomorphism
    $$
\pi_1\Sigma\to \cG\xrightarrow{\pi_0} G \implies \pi^\hol\colon \Hom(\pt\sq \pi_1\Sigma,\cG)\to \Hom(\pt\sq \pi_1\Sigma,\pt\sq G). 
    $$
\end{prop}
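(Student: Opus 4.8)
The plan is to adapt the holonomy presentation of $\Bun_G(\Sigma)$ from Example~\ref{ex: hol pres} to the categorical setting, using the universal cover $\uc\to\Sigma$ as a distinguished cover. Since $g\ge 1$, the surface is aspherical and $\uc\cong\R^2$ is contractible; consequently, paralleling the epimorphism $\Hom(\pi_1\Sigma,G)\twoheadrightarrow\Bun_G(\Sigma)$, I would show that restricting the \v{C}ech data of Definition~\ref{defn:prinbund1} to the single cover $\tilde u\colon\uc\to\Sigma$ yields a cover of $\Bun_\cG(\Sigma)$, so that the full sub-bicategory on universal-cover objects is equivalent to all of $\Bun_\cG(\Sigma)$. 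Essential surjectivity is where contractibility enters: an arbitrary object $(u,\rho,\gamma)$ has flat underlying $G$-bundle, hence is isomorphic over $\uc$ to some $P_{\rhohol}$, and because $\rmH^2(\uc;A)=\rmH^3(\uc;A)=0$ the trivialization datum $\gamma$ is likewise trivializable upstairs, so the whole $\cG$-structure can be transported to $\tilde u$ (concretely, by pulling back to the common refinement $\uc\times_\Sigma Y$ as in the $\cG$-analogue of Example~\ref{rmk: pullback gives morphism} and descending). This is exactly the step that fails for $g=0$, where $\uc=S^2$ carries nontrivial gerbes.

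First I would set up the dictionary on objects. Using the identification $\uc^{[2]}\cong\uc\times\pi_1\Sigma$ of~\eqref{eq:fiberedproductforuniv} together with its higher analogues $\uc^{[n]}\cong\uc\times(\pi_1\Sigma)^{n-1}$, and using that $\uc$ is connected so that a locally constant function on $\uc^{[n]}$ depends only on its $(\pi_1\Sigma)^{n-1}$-coordinates, the data $(\tilde u,\rho,\gamma)$ becomes a pair $\rhohol\colon\pi_1\Sigma\to G$ and $\gammahol\colon\pi_1\Sigma\times\pi_1\Sigma\to A$. The cocycle condition on $\rho$ becomes the statement that $\rhohol$ is a homomorphism (exactly as in Example~\ref{ex: hol pres}), while the condition $\rho^*\alpha=d\gamma$ of~\eqref{eq:highercompatcond2} becomes $\rhohol^*\alpha=d\gammahol$ and the normalization of $\gamma$ becomes that of $\gammahol$. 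These are precisely the conditions making $(\rhohol,\gammahol)$ a weak representation in the sense of~\eqref{eq:weakrep}, because the associator of the one-object bicategory $\pt\sq\cG$ is the $3$-cocycle $\alpha$. This is supported by Remark~\ref{rmk:bundle as functor}: an object supported on $\tilde u$ is a $2$-functor $\check{C}(\uc)\to\pt\sq\cG$, and $\check{C}(\uc)\simeq\pt\sq\pi_1\Sigma$.

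Next I would run the same reduction on $1$- and $2$-morphisms. Representing a $1$-morphism between universal-cover objects with $Z=\uc$ and $v_1=v_2=\id$, a locally constant $h\colon\uc\to G$ is a constant $g\in G$, and~\eqref{eq: h condition} reduces to the conjugation relation $\rhohol_2=g\,\rhohol_1\,g^{-1}$; the remaining datum $\eta\colon\uc^{[2]}\cong\uc\times\pi_1\Sigma\to A$ becomes $\etahol\colon\pi_1\Sigma\to A$, and I would check that~\eqref{eq:eta condition} becomes exactly the coherence axiom for a pseudonatural transformation (a ``conjugation'') between the corresponding $2$-functors. Similarly a $2$-morphism reduces to a constant $a\in A$ and~\eqref{eq: omega condition} becomes the defining equation of a modification. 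Matching all three levels identifies the universal-cover sub-bicategory with $\Bicat(\pt\sq\pi_1\Sigma,\pt\sq\cG)$, giving~\eqref{eq: holonomy equivalence}. Compatibility with $\pi$ is then immediate: by Lemma~\ref{lemma: pi} the forgetful functor discards $\gamma,\eta,\omega$ and retains $\rho,h$, which under the dictionary is postcomposition with $\pi_0\colon\cG\to G$, sending $(\rhohol,\gammahol)\mapsto\rhohol$.

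The main obstacle I anticipate is the $1$-morphism step: verifying that~\eqref{eq:eta condition}, with its three separate insertions of the associator $\alpha$, corresponds term-by-term to the coherence axiom of a monoidal (pseudo)natural transformation valued in $\cG$. This is the point at which the twisting by $\alpha$ must be bookkept precisely, and where the bulk of the computation lies; the object-level and $2$-morphism-level matchings, and the descent to the universal cover, are comparatively routine once contractibility of $\uc$ is invoked. I would also take care that each reduction respects the ``upon refinement'' equivalence relation on $1$- and $2$-morphisms from Definition~\ref{defn:prinbund1}, so that the resulting comparison is a well-defined, fully faithful biequivalence rather than merely a bijection on equivalence classes.
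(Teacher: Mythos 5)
Your proposal is correct and follows essentially the same route as the paper: pull everything back to the contractible universal cover, use the identifications $\uc^{[n]}\cong\uc\times(\pi_1\Sigma)^{n-1}$ to convert the locally constant \v{C}ech data $(\rho,\gamma,h,\eta,\omega)$ into the group-theoretic data $(\rhohol,\gammahol,\hhol,\etahol,\omegahol)$, and match the resulting conditions with the explicit description of $\Bicat(\pt\sq\pi_1\Sigma,\pt\sq\cG)$. The only difference is that the paper outsources the coherence-axiom matching (the step you flag as the main obstacle) to the explicit description of weak 2-homomorphisms in the cited reference of Ganter--Usher rather than verifying it term by term.
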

\bp
We compare $\Bun_\cG(\Sigma)$ from Definition~\ref{defn:prinbund1} with the bicategory $\Bicat(\pt\sq \pi_1\Sigma,\pt\sq \cG)$ as described explicitly in~\cite{GanterUsher}; below we recall the objects, referring to~\cite{GanterUsher} for details on 1- and 2-morphisms. 

As $\Sigma$ has genus $g\ge 1$, its universal cover $\tilde{\Sigma}$ is contractible and every $\cG$-bundle trivializes when pulled back along $\tilde{u} \colon \tilde{\Sigma}\to \Sigma$. Indeed, the $G$-bundle trivializes (so $\rho$ and hence $\rho^*\alpha$ are trivial); the choices of $\gamma$ comprise the set of 2-cocycles, and all such 2-cocycles are exact. Iterating the fiber products in~\eqref{eq:fiberedproductforuniv}, a $\cG$-bundle on $\Sigma$ defined relative to the universal cover provides the group theoretic data
\begin{align}\label{eq: definition of gammahol}
\begin{array}{cc}
    \rhohol \colon \pi_1 \Sigma \to G, & \qquad \rhohol(a) = \rho(y, y\cdot a)\\
    \gammahol \colon \pi_1 \Sigma \times \pi_1 \Sigma \to A, & \qquad \gammahol(a,b) = \gamma(y, y\cdot a, y \cdot ab).
    \end{array}
\end{align}
Then the conditions \eqref{eq: condition for rho} and \eqref{eq:highercompatcond2} on $\rho$ and $\gamma$ imply
\beq\label{eqn: hol condition for gamma}
\rhohol(gg')=\rhohol(g)\rhohol(g'), \quad \rhohol^*\alpha=d\gammahol, \quad \gammahol(a, 1) = \gammahol(1, b) = 1_G.
\eeq
The data and conditions~\eqref{eqn: hol condition for gamma} are precisely those of a strong monoidal functor $\pi_1\Sigma \to \cG$~\cite[Section 3.1.1]{GanterUsher}:  $\rhohol$ gives the value of the functor on objects and the value on morphisms is no data as $\pi_1\Sigma$ is a discrete 2-group. The map $\gammahol$ gives the compatibility with the monoidal structure, and the condition that $\rhohol^*\alpha=d\gammahol$ is the condition that $\gammahol$ is compatible with the associator for $\cG$.
This gives the equivalence $\Bicat(\pt\sq\pi_1\Sigma, \pt\sq\cG) \to \Bun_\cG(\Sigma)$ on the level of objects; we will denote the bifunctor in this direction by $\iota$ as we did in the case of ordinary holonomy for $G$-bundles. 

Similarly, a 1-isomorphism between flat $\cG$-bundles is equivalent to an element $\hhol\in G$ and a map $\etahol\colon \pi_1\Sigma\to A$ such that the composition $\tilde{\Sigma} \times \pi_1\Sigma \to \pi_1\Sigma \to A$ satisfy~\eqref{eq:eta condition}. Comparing with~\cite[Section 3.1.2]{GanterUsher}, this is equivalent to a natural isomorphism between functors from $\pi_1X\to \cG$. 

Finally, a 2-isomorphism between $\cG$-bundles is equivalent to an element $\omegahol\in A$, where a 2-isomorphism between 1-isomorphism exists if and only if $\eta=\eta'$ and $h=h'$. This recovers invertible modifications between natural isomorphisms of functors; see~\cite[Section~3.1.3]{GanterUsher}. 
This gives an equivalence of bicategories $\BibunGSig \simeq \Bicat(\pt\sq \pi_1\Sigma, \pt\sq \cG)$.
\ep

\begin{rmk}
The \v{C}ech complex relative to the universal cover $\tilde{\Sigma}\to \Sigma$ is the bar complex for $\pi_1\Sigma$ that computes group (co)homology. In this language, $\tilde{\gamma}$ in~\eqref{eqn: hol condition for gamma} is a coboundary of the group cohomology class $[\rho^*\alpha]\in H^3_{\rm grp}(\pi_1\Sigma;A)$, and $\tilde{\eta}$ changes this coboundary by an exact term. 
\end{rmk}

\begin{rmk}
We expect a version of the above proposition to hold for more general manifolds $X$; for this is it necessary to replace the fundamental group $\pi_1X$ with the \emph{fundamental 2-group} $\pi_{\le 2}X$. For $\Sigma$ as in the proposition, the fundamental 2-group is equivalent to the ordinary fundamental group. 
\end{rmk}

\section{Constructing a $\U(1)$-bundle over $\Bun_G(\Sigma)$ from the geometry of 2-group bundles}\label{sec: construct P}

The goal in this section is to use the functor $\pi \colon \Bun_\cG(X) \to \Bun_G(X)$ to construct a principal $\U(1)$-bundle $\sP_\cG$ over $\Bun_G(X)$, in the special case that $\cG= \cG(G, \U(1), \alpha)$ and $X$ is a connected closed oriented surface $\Sigma$ of genus $\ge 1$. In section~\ref{sec:proof of main thm} we will show that the line bundle associated to this principal $\U(1)$-bundle is isomorphic to the Freed--Quinn line bundle. 

Morally, the $\U(1)$-bundle $\sPG$ is the shadow of a higher categorical structure on $\Bun_\cG(\Sigma)$: indeed, for a general smooth manifold $X$ and 2-group of the form $\cG = \cG(G, A, \alpha)$, the bifunctor $\pi \colon \Bun_\cG(X) \to \Bun_G(X)$ is a 2-fibration over its essential image, which we denote by $B$. Furthermore, there is a natural action of the symmetric monoidal bicategory $\Gerbe_A(X)$ on $\Bun_\cG(X)$, given by twisting a principal bundle by a gerbe, making $\Bun_\cG(X)$ into a principal $\Gerbe_A(X)$-bundle over $B$. There are also actions of the group $\Diff(X)$ on the base $B$, the total space $\Bun_\cG(X)$, and the ``structure group'' $\Gerbe_A(X)$ (all given by pulling back along diffeomorphisms of $X$) making $\Bun_\cG(X) \to B$ into a $\Diff(X)$-equivariant principal $\Gerbe_A(X)$-bundle. 

It is beyond the scope of this article to define rigorously all of these notions, but we will take a truncation of the bicategory $\Bun_\cG(X)$ along the fibers of $\pi$, to produce a category $\sP_\cG$, reduce the ``structure group'' from the symmetric monoidal bicategory $\Gerbe_A(X)$ to the (ordinary) group of isomorphism classes of gerbes $\check{\rmH}^2(X;A)$, and prove that the resulting action of $\check{\rmH}^2(X;A)$ on the category $\cP_\cG$ does indeed produce a $\Diff(X)$-equivariant principal bundle.

Besides being essential for the comparison to the Freed--Quinn line bundle, the specialization from a general smooth manifold $X$ to an oriented surface $\Sigma$ yields three simplifications for us: (1) it ensures that the bifunctor $\pi$ is essentially surjective, so that its essential image $B$ is equal to $\Bun_G(\Sigma)$ (see Remark \ref{rmk: surfaces}; (2) it allows us to use the higher holonomy presentation of principal $\cG$-bundles (Proposition \ref{prop:weakrepn}), which simplifies some proofs; and (3) a choice of orientation of $\Sigma$ determines an isomorphism of groups $\check{\rmH}^2(\Sigma;A)\cong A$, so that we obtain a principal $A$-bundle over $\Bun_G(\Sigma)$. The specialization from a general abelian group $A$ to $\U(1)$ (with the discrete topology) is necessary only for comparing with Freed--Quinn, and does not otherwise affect any of the proofs. 

\subsection{The fibers of $\pi\colon \Bun_\cG(X)\to \Bun_G(X)$}

As a first step towards constructing a principal bundle over $\Bun_G(X)$ out of the functor $\pi$~\eqref{eq:functorpi}, we analyze the fibers of this functor. The results of this section hold in the general setting of a $2$-group $\cG(G, A, \alpha)$ and a smooth manifold $X$.

\begin{lem}\label{lem: gamma ratio is closed}
For a fixed $(u,\rho)\in \Bun_G(X)$, consider a pair of objects $(u,\rho,\gamma), (u,\rho,\gamma')\in \Bun_{\cG}(X)$. Then the ratio $\gamma/\gamma'$ of \v{C}ech 2-cochains determines a 2-cocycle, and hence has an underlying cohomology class
\beq\label{Eq:cohomclass}
[\gamma/\gamma']\in \check{\rmH}^2(X;A)
\eeq
in the \v{C}ech cohomology of $X$ with coefficients in $A$ (with its discrete topology). 
\end{lem}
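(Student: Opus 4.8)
The plan is to observe that the ratio $\gamma/\gamma'$ is automatically closed under the \v{C}ech differential, because the two cochains $\gamma$ and $\gamma'$ have the \emph{same} differential. First I would recall that, by the defining condition~\eqref{eq:highercompatcond2} for objects of $\Bun_\cG(X)$, both $(u,\rho,\gamma)$ and $(u,\rho,\gamma')$ satisfy $d\gamma = \rho^*\alpha$ and $d\gamma' = \rho^*\alpha$ as maps $Y^{[4]}\to A$. The crucial point is that the two objects share the same underlying data $(u,\rho)$: the surjective submersion $u\colon Y\to X$ and the $G$-cocycle $\rho$ are identical, so the pulled-back 3-cochain $\rho^*\alpha$ is literally the same function on $Y^{[4]}$ in both equations, and in particular the pointwise ratio $\gamma/\gamma'$ is a well-defined map $Y^{[3]}\to A$ without any passage to a common refinement of covers.

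Next I would use that, since $A$ is abelian (written multiplicatively), the \v{C}ech differential $d$ is a group homomorphism on $A$-valued cochains and hence respects this pointwise ratio. Thus
\[
d(\gamma/\gamma') = (d\gamma)/(d\gamma') = \rho^*\alpha / \rho^*\alpha = 1_A \colon Y^{[4]}\to A,
\]
which shows that $\gamma/\gamma'$ is a \v{C}ech 2-cocycle relative to the cover $u\colon Y\to X$. One also checks directly from the normalization conditions in~\eqref{eq:highercompatcond2} that $\gamma/\gamma'$ is normalized, though this is not needed to extract a cohomology class. A closed 2-cochain relative to a cover determines a class in the \v{C}ech cohomology $\check{\rmH}^2(X;A)$, defined in the colimit over refinements, and this produces the class~\eqref{Eq:cohomclass}.

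There is no serious obstacle: the entire content of the statement is that $\gamma$ and $\gamma'$ share a differential, which is forced by their sharing $(u,\rho)$. The only step requiring a small amount of care is the bookkeeping with covers---verifying that both cochains are genuinely defined over the same $Y^{[3]}$ so that their ratio and its closedness can be read off on the nose---which is exactly what the hypothesis that the two objects have identical underlying surjective submersion and $G$-cocycle guarantees.
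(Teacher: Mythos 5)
Your proposal is correct and matches the paper's argument exactly: both use that $d\gamma = \rho^*\alpha = d\gamma'$ (forced by the shared data $(u,\rho)$) together with the fact that the \v{C}ech differential respects pointwise ratios of $A$-valued cochains, so $d(\gamma/\gamma') = 1$. The extra remarks on normalization and on both cochains living over the same $Y^{[3]}$ are accurate but not needed beyond what the paper records.
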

\bp
Using the defining relation $\rho^*\alpha=d\gamma$ for $(u,\rho,\gamma)$ to be an object of $\Bun_{\cG}(X)$, we have
$$
d(\gamma/\gamma')=(\rho^*\alpha)/(\rho^*\alpha)=1, 
$$
as claimed. 
\ep
\begin{rmk}\label{remark about gerbes}
    In other words, Lemma \ref{lem: gamma ratio is closed} tells us that two objects $(u, \rho, \gamma)$ and $(u, \rho, \gamma')$ in the same fiber of $\pi$ differ by an $A$-gerbe. 
\end{rmk}

The following theorem provides a lifting of isomorphisms of $G$-bundles to isomorphisms of $\cG$-bundles. 

\begin{thm}\label{thm: lifting lemma}
Let $\varphi_{Y,h} \colon P_{u, \rho_1} \to P_{u, \rho_2}$ be an isomorphism of $G$-bundles defined with respect to the same surjective sumersion $u \colon Y \to X$, and let $\cP_{u, \rho_1, \gamma_1}$ be principal $\cG$-bundle living over $P_{u, \rho_1}$. 
\begin{enumerate}
    \item There exists a principal $\cG$-bundle $\cP_{u, \rho_2, {}^h\gamma_1}$ together with a 1-morphism $\varphi_{Y, h, \eta} \colon \cP_{u, \rho_1, \gamma_1} \to \cP_{u, \rho_2, {}^h\gamma_1}$ such that $\pi(\varphi_{Y, h, \eta}) = \varphi_{Y, h}$.

    \begin{center}
        \begin{tikzcd}
            \cP_{u,\rho_1,\gamma_1}\arrow[d, mapsto] \arrow[r, dashed, "\varphi_{Y,h,\eta}"]& \cP_{u,\rho_2, {}^h\gamma_1}\arrow[d, dashed, mapsto]\\
            P_{u,\rho_1} \arrow[r, "\varphi_{Y,h}"']& P_{u,\rho_2}
        \end{tikzcd}
    \end{center}
    \item For $\cP_{u, \rho_2, \gamma_2}$ another principal $\cG$ bundle living over $P_{u, \rho_2}$, there exists a 1-morphism $\varphi_{Y, h, \eta'} \colon \cP_{u, \rho_1, \gamma_1} \to \cP_{u, \rho_2, \gamma_2}$ with $\pi(\varphi_{Y, h, \eta})= \varphi_{Y, h}$ if and only if ${\gamma_2} = {}^h\gamma_1 d\eta''$ for some $\eta'' \colon Y\times_X Y \to A$; in particular, 
    \begin{align*}
        \left[{}^h\gamma_1/{\gamma_2}\right] = 1 \in \check{\rmH}^2(X;A).
    \end{align*}
\end{enumerate}
\end{thm}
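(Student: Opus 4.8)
The plan is to work entirely at the level of the explicit \v{C}ech cochains of Definition~\ref{defn:prinbund1}. First I would unwind the data: since $\varphi_{Y,h}$ is defined relative to $u\colon Y\to X$ with $v_1=v_2=\id_Y$, the function $h\colon Y\to G$ satisfies the relation \eqref{eq: h condition}, which on $Y^{[2]}$ reads $\rho_2(y_1,y_2)=h(y_1)\rho_1(y_1,y_2)h(y_2)^{-1}$. A $1$-morphism $\varphi_{Y,h,\eta}\colon \cP_{u,\rho_1,\gamma_1}\to\cP_{u,\rho_2,\gamma_2}$ lying over $\varphi_{Y,h}$ is then exactly a locally constant $\eta\colon Y^{[2]}\to A$ solving \eqref{eq:eta condition}, and by Lemma~\ref{lemma: pi} every $1$-morphism with $\pi(-)=\varphi_{Y,h}$ arises this way. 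This reduces the entire theorem to the solvability of \eqref{eq:eta condition}.

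For part (1) I would read off the target forced by the choice $\eta=1$: define ${}^h\gamma_1:=\gamma_1\cdot c$, where
\[
c(y_1,y_2,y_3)=\frac{\alpha(\rho_2(y_1,y_2),h(y_2),\rho_1(y_2,y_3))}{\alpha(h(y_1),\rho_1(y_1,y_2),\rho_1(y_2,y_3))\,\alpha(\rho_2(y_1,y_2),\rho_2(y_2,y_3),h(y_3))}
\]
is the $\alpha$-factor on the right-hand side of \eqref{eq:eta condition}. With this definition \eqref{eq:eta condition} holds tautologically for $\eta=1$, so it remains only to check that ${}^h\gamma_1$ is a legitimate object, i.e.\ that $d({}^h\gamma_1)=\rho_2^*\alpha$ and that ${}^h\gamma_1$ is normalized. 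Normalization is quick: using $\rho_i(y,y)=1_G$ (a consequence of \eqref{eq: condition for rho}) together with normalization of $\alpha$, each of the three $\alpha$-factors of $c$ degenerates to $1$ on the tuples $(y_1,y_2,y_2)$ and $(y_2,y_2,y_3)$, so $c$, and hence ${}^h\gamma_1=\gamma_1 c$, inherits normalization from $\gamma_1$.

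The cocycle identity $d({}^h\gamma_1)=\rho_2^*\alpha$ is the technical heart and the step I expect to be the main obstacle. Since $d\gamma_1=\rho_1^*\alpha$, it is equivalent to the purely $\alpha$-theoretic identity $dc=\rho_2^*\alpha/\rho_1^*\alpha$ on $Y^{[4]}$. I would prove this by expanding $dc$ as the product of the \v{C}ech differentials of the three $\alpha$-factors and then repeatedly invoking the $3$-cocycle relation $d\alpha=1$, applied to the appropriate $4$-tuples of group elements assembled from $\rho_1$, $\rho_2$, and $h$ (using $\rho_2=h\rho_1 h^{-1}$ to pass between $\rho_2$- and $\rho_1$-evaluations). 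This is a finite but lengthy cancellation; conceptually it says that transporting a trivialization $\gamma_1$ of $\rho_1^*\alpha$ along the gauge transformation $h$ yields a trivialization of the cohomologous cocycle $\rho_2^*\alpha$, with $c$ the explicit transport $2$-cochain. Once this is established, the tuple $(Y,\id_Y,\id_Y,h,1)$ is a well-defined $1$-morphism $\cP_{u,\rho_1,\gamma_1}\to\cP_{u,\rho_2,{}^h\gamma_1}$ with $\pi=\varphi_{Y,h}$, proving part (1).

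Part (2) then follows formally. For an arbitrary target $\cP_{u,\rho_2,\gamma_2}$, rewriting \eqref{eq:eta condition} by means of ${}^h\gamma_1=\gamma_1 c$ turns it into $d\eta'={}^h\gamma_1/\gamma_2$. Hence a $1$-morphism over $\varphi_{Y,h}$ exists if and only if ${}^h\gamma_1/\gamma_2$ is an exact \v{C}ech $2$-cochain, i.e.\ iff $\gamma_2={}^h\gamma_1\cdot d\eta''$ with $\eta''=(\eta')^{-1}$, which is precisely the stated criterion. Finally, since $d({}^h\gamma_1)=\rho_2^*\alpha=d\gamma_2$, the ratio ${}^h\gamma_1/\gamma_2$ is closed by (the argument of) Lemma~\ref{lem: gamma ratio is closed}, so its exactness is equivalent to the vanishing of its class, giving the asserted $[{}^h\gamma_1/\gamma_2]=1\in\check{\rmH}^2(X;A)$.
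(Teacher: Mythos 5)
Your proposal is correct and follows essentially the same route as the paper: you define ${}^h\gamma_1$ by multiplying $\gamma_1$ by exactly the $\alpha$-correction factor appearing in \eqref{eq:eta condition} (the reciprocal of the paper's $\beta$ in \eqref{eq:betaeq}), verify $d({}^h\gamma_1)=\rho_2^*\alpha$ via the same identity $d\beta=\rho_1^*\alpha/\rho_2^*\alpha$ that the paper also leaves as an assertion from the $3$-cocycle condition, and take $\eta=1$. The only (harmless) deviations are that you check normalization of ${}^h\gamma_1$ explicitly, and you obtain part (2) by directly rewriting \eqref{eq:eta condition} as $d\eta'={}^h\gamma_1/\gamma_2$ rather than by the paper's composition with $(\varphi_{Y,h,\eta'})^{-1}$; these are equivalent.
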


\bp
The first statement amounts to the claim that there exist locally constant cochains ${}^h\gamma_1\colon Y^{[3]}\to A, \eta\colon Y^{[2]}\to A$, satisfying the conditions in Definition~\ref{defn:prinbund1}. Using the 3-cocycle condition for $\alpha$ and the relation $\rho_2 \cdot p_2^*h = p_1^*h \cdot \rho_1$, we find that 
\beq\label{eq:ratioofrho}
    \frac{\rho_1^*{\alpha}}{\rho_2^*{\alpha}}(y_1, y_2, y_3,y_4) = d\beta(y_1, y_2, y_3,y_4)
\eeq
for the 3-cochain $\beta$ defined as
\beq\label{eq:betaeq}
\beta(y_1,y_2,y_3)&=&\frac{\alpha(h(y_1), \rho_1(y_1, y_2), \rho_1(y_2, y_3)) \alpha(\rho_2(y_1, y_2), \rho_2(y_2, y_3), h(y_3))}{\alpha(\rho_2(y_1, y_2), h(y_2), \rho_1(y_2, y_3))}.
\eeq

Then define
\begin{align*}
{}^h\gamma_1(y_1,y_2,y_3) &= \gamma_1(y_1,y_2,y_3) \cdot \frac{\alpha(\rho_2(y_1, y_2), h(y_2), \rho_1(y_2, y_3))}{\alpha(h(y_1), \rho_1(y_1, y_2), \rho_1(y_2, y_3)) \alpha(\rho_2(y_1, y_2), \rho_2(y_2, y_3), h(y_3))}\\
&= \frac{\gamma_1}{\beta}(y_1,y_2,y_3).
\end{align*}

Using~\eqref{eq:ratioofrho} we find $d({}^h\gamma_1)=\rho_2^*\alpha$ and so $\cP_{u, \rho_2, {}^h\gamma_1}$ is an object of $\Bun_\cG(X)$. By definition, an isomorphism $\cP_{u, \rho_1,\gamma_1}\to \cP_{u, \rho_2, {}^h\gamma_1}$ covering the isomorphism of $G$-bundles is given by $\eta \colon Y^{[2]} \to A$ satisfying~\eqref{eq:eta condition}. From the definition of ${}^h\gamma_1$, this relation is satisfied for any $\eta$ with $d\eta=1$.

For the second statement, we consider the composition
\begin{align*}
    \cP_{u, \rho_2, \gamma_2}\xrightarrow{(\varphi_{Y, h, \eta'})^{-1}} \cP_{u, \rho_1, \gamma_1} \xrightarrow{\varphi_{Y, h, \eta}} \cP_{u, \rho_2, {}^h\gamma_1};
\end{align*}
we obtain $\varphi_{Y,1_G, \eta/\eta'} \colon \cP_{u, \rho_2, \gamma_2} \to \cP_{u, \rho_2, {}^h\gamma_1}$. Setting $\eta'' = \eta/\eta'$, condition \eqref{eq:eta condition} becomes $d\eta'' = \gamma_2/{}^h\gamma_1$, as claimed. Conversely, given $\eta''$ satisfying this condition, we take $\eta'= \eta/\eta''$ and set $\varphi_{Y, h, \eta'} = \varphi_{Y, 1_G, \eta''}^{-1} \circ \varphi_{Y, h, \eta}$. 
\ep

\begin{rmk}
    For $\Sigma$ a closed oriented surface of genus $g \ge 1$, we can restate Theorem \ref{thm: lifting lemma} in terms of the higher holonomy description of $\Bun_\cG(\Sigma)$. We will use the notation ${}^h\gammahol_1$ for the resulting lift, which is unique up to multiplication by an exact cocycle $d \tilde \eta$.
\end{rmk}

\begin{cor}\label{cor: lifting lemma with refinements}
    Let $\cP_{u, \rho, \gamma}, \cP_{u, \rho, \gamma'}$ be two principal $\cG$-bundles living over the principal $G$-bundle $P_{u, \rho}$, and defined with respect to the same surjective submersion $u \colon Y \to X$. Then there is an isomorphism $\varphi_{Z, 1_G, \eta} \colon \cP_{u, \rho, \gamma} \to \cP_{u, \rho, \gamma'}$ (possibly defined over a refinement of $Y$) covering $\id_{P_{u, \rho}}$ if and only if 
    \begin{align*}
        [\gamma/\gamma'] = 1 \in \check{\rmH}^2(X;A).
    \end{align*}
\end{cor}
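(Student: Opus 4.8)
The plan is to reduce the statement to a direct unwinding of the $1$-morphism condition~\eqref{eq:eta condition}, combined with the characterization of \v{C}ech cohomology via refinements. The starting point is Lemma~\ref{lem: gamma ratio is closed}: the ratio $\gamma/\gamma'$ is a genuine \v{C}ech $2$-cocycle for the cover $u\colon Y\to X$, representing the class $[\gamma/\gamma']\in\check{\rmH}^2(X;A)$, and this class vanishes precisely when $\gamma/\gamma'$ becomes exact after pulling back along some refinement of $Y$.

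First I would analyze what a $1$-morphism $\varphi_{Z,1_G,\eta}$ covering $\id_{P_{u,\rho}}$ amounts to. Such a morphism is data $(Z,v_1,v_2,1_G,\eta)$; since it covers the identity of the underlying $G$-bundle, the equivalence relation on morphisms in the \v{C}ech presentation~\eqref{eq: cech morphisms} lets me pass to a common refinement on which $v_1=v_2=:w\colon Z\to Y$ is a single refinement map. The crucial simplification is that, with $h=1_G$ and $\rho_1=\rho_2=\rho$, every $\alpha$-factor on the right-hand side of~\eqref{eq:eta condition} contains an entry equal to $1_G$ and hence equals $1_A$, because $\alpha$ is normalized (Convention~\ref{notation:GAalpha}). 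Thus~\eqref{eq:eta condition} collapses to
\[
d\eta \;=\; w^*\gamma/w^*\gamma' \;=\; w^*(\gamma/\gamma'),
\]
so the morphism data is exactly a refinement $w$ together with a trivialization of $w^*(\gamma/\gamma')$; the remaining condition~\eqref{eq: h condition} holds automatically since $v_1^*\rho=v_2^*\rho=w^*\rho$.

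With this identification both directions are immediate. For the ``if'' direction, if $[\gamma/\gamma']=1$ then by definition of $\check{\rmH}^2(X;A)$ there is a refinement $w\colon Z\to Y$ and a cochain $\eta\colon Z^{[2]}\to A$ with $d\eta=w^*(\gamma/\gamma')$, and the tuple $(Z,w,w,1_G,\eta)$ is the desired isomorphism. For the ``only if'' direction, any such isomorphism produces, via the collapsed identity above, a refinement on which $\gamma/\gamma'$ is exact, whence $[\gamma/\gamma']=1$. Alternatively, one may deduce both directions from Theorem~\ref{thm: lifting lemma}(2) applied after pulling the two $\cG$-bundles back along $w$, using that pullback along a refinement is an isomorphism of $\cG$-bundles and that ${}^{1_G}\gamma=\gamma$ by the same normalization argument.

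The one genuinely substantive point, as opposed to routine bookkeeping, is the passage between ``exact on the fixed cover $Y$'' and ``trivial in $\check{\rmH}^2(X;A)$'': Theorem~\ref{thm: lifting lemma}(2) with $h=1_G$ already yields the criterion $\gamma'=\gamma\,d\eta''$ over $Y$ itself, and the content of the corollary is precisely that allowing the isomorphism to live over a refinement relaxes this to the honest cohomological condition. I therefore expect the care to be needed in justifying the reduction to $v_1=v_2$ and in confirming that $w^*(\gamma/\gamma')$ represents the image of $[\gamma/\gamma']$ under the refinement map, so that its exactness detects vanishing of the class.
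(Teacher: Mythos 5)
Your proposal is correct and follows essentially the same route as the paper, whose entire proof is to invoke Theorem~\ref{thm: lifting lemma}(2) with $h=1_G$ while allowing refinements of the cover; your direct collapse of~\eqref{eq:eta condition} via normalization of $\alpha$ is precisely the content of that theorem in the case $h=1_G$ (where $\beta\equiv 1$ and ${}^{1_G}\gamma=\gamma$), and you correctly identify the only real content as the passage from exactness over a refinement to vanishing of the class in $\check{\rmH}^2(X;A)$.
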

\bp
This follows from part (2) of Theorem \ref{thm: lifting lemma} upon setting $h=1_G$ and allowing refinements of the cover $Y$. 
\ep

\begin{rmk}
    In the case that $X$ in a non-orientable surface, ${\rm H}^2(X; A)$ is trivial. Therefore it follows from Corollary \ref{cor: lifting lemma with refinements} that up to isomorphism there is a unique principal $\cG$-bundle living over any given principal $G$-bundle. 
\end{rmk}

\subsection{A groupoid $\sP_\cG$ over $\Bun_G(\Sigma)$ constructed from $\Bun_{\cG}(\Sigma)$}
We now define a truncation of the bicategory $\Bun_\cG(X)$ as follows:

\begin{defn}\label{defn:Pline}
Define the groupoid $\sPG$ as having objects equivalence classes $(u,\rho,[\gamma])$ for $\cP_{u,\rho,\gamma}\in \Bun_\cG(X)$ and the equivalence relation on the datum $\gamma$ defined by
\beq\label{eq:equivalencerelation}
\gamma\sim \gamma'\iff \text{ there is an isomorphism $\cP_{u, \rho, \gamma} \to \cP_{u, \rho, \gamma'}$ covering $\id_{P_{u, \rho}}$} . 
\eeq
Define $\sPG$ to have morphisms 
\beq\label{eq:morphismsinsP}
&&\Hom_{\sPG}((u_1,\rho_1,[\gamma_1]),(u_2,\rho_2,[\gamma_2])=\left\{\begin{array}{cc} \{\varphi_{Z,h}\colon P_{u_1,\rho_1}\to P_{u_2,\rho_2}\} & {\rm if} \ \exists \,\varphi_{Z,h,\eta}\colon \cP_{u_1,\rho_1,\gamma_1}\to \cP_{u_2,\rho_2,\gamma_2}\\ 
\emptyset & {\rm else}.\end{array}\right.
\eeq
Composition in $\sPG$ is inherited from composition in $\Bun_{\cG}(X)$. 
\end{defn}

\begin{rmk}\label{remark: equivalence relation in cohomology}
By Corollary \ref{cor: lifting lemma with refinements}, we have
\beq
\gamma\sim \gamma'\iff[\gamma/\gamma']=1\in \check{\rmH}^2(X;A).
\eeq
\end{rmk}

\begin{lem}
Definition~\ref{defn:Pline} does indeed produce a groupoid $\sPG$ that furthermore factors~\eqref{eq:functorpi} via
\begin{center}
    \begin{tikzcd}
          \Bun_\cG(X) \arrow[rr]\arrow[dr, "\pi"'] & & \sPG \arrow[dl, "\overline{\pi}"] \\
          & \Bun_G(X)
    \end{tikzcd}
\end{center}
for $\overline{\pi}(u,\rho,[\gamma])=P_{u,\rho}\in \Bun_G(X)$. 
\end{lem}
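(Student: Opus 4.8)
The plan is to verify, in order, that $\sim$ is an equivalence relation, that the hom-sets in~\eqref{eq:morphismsinsP} are independent of the chosen representatives $\gamma_i$, and that the resulting data satisfy the groupoid axioms; the factorization through $\overline\pi$ is then essentially formal. The three inputs I would use are the functoriality of $\pi$ (Lemma~\ref{lemma: pi}), Theorem~\ref{thm: lifting lemma}, and the cohomological description of $\sim$ from Remark~\ref{remark: equivalence relation in cohomology}. That $\sim$ is an equivalence relation is immediate from Remark~\ref{remark: equivalence relation in cohomology}: since $[\gamma/\gamma']=1$ in the abelian group $\check{\rmH}^2(X;A)$, reflexivity, symmetry and transitivity follow from $[\gamma/\gamma]=1$, $[\gamma'/\gamma]=[\gamma/\gamma']^{-1}$ and $[\gamma/\gamma'']=[\gamma/\gamma'][\gamma'/\gamma'']$.

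The crux is representative-independence of the hom-sets. Fixing a $G$-bundle isomorphism $\varphi_{Z,h}\colon P_{u_1,\rho_1}\to P_{u_2,\rho_2}$, I would show it lifts to some $\varphi_{Z,h,\eta}\colon \cP_{u_1,\rho_1,\gamma_1}\to \cP_{u_2,\rho_2,\gamma_2}$ if and only if it lifts for any other choices $\gamma_1\sim\gamma_1'$, $\gamma_2\sim\gamma_2'$. By definition of $\sim$ there are isomorphisms $\psi_1\colon \cP_{u_1,\rho_1,\gamma_1}\to \cP_{u_1,\rho_1,\gamma_1'}$ and $\psi_2\colon \cP_{u_2,\rho_2,\gamma_2}\to \cP_{u_2,\rho_2,\gamma_2'}$ covering the respective identities; given a lift $\varphi_{Z,h,\eta}$, the conjugate $\psi_2\circ\varphi_{Z,h,\eta}\circ\psi_1^{-1}$ is again a $\cG$-bundle morphism, and $\pi(\psi_i)=\id$ together with functoriality of $\pi$ shows it covers $\varphi_{Z,h}$. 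Hence it is a lift with respect to the primed data, and the converse is symmetric. In particular, after choosing representatives, a hom-set of $\sPG$ is exactly the set of those $\varphi_{Z,h}\in \Hom_{\Bun_G(X)}(P_{u_1,\rho_1},P_{u_2,\rho_2})$ that lift.

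The groupoid axioms then follow by transporting lifts along $\pi$. Identities lift, since $\id_{P_{u,\rho}}$ is covered by $\id_{\cP_{u,\rho,\gamma}}$; composites lift, because fixing one representative $\gamma_2$ for the middle object lets us compose lifts $\cP_{u_1,\rho_1,\gamma_1}\to \cP_{u_2,\rho_2,\gamma_2}\to \cP_{u_3,\rho_3,\gamma_3}$ to a lift of $\psi\circ\varphi$; and inverses lift because every $1$-morphism of $\Bun_\cG(X)$ is invertible. Associativity and unitality are inherited from $\Bun_G(X)$ via the assignment $\varphi_{Z,h,\eta}\mapsto\varphi_{Z,h}$, so $\sPG$ is a groupoid. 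For the factorization, I would take $\overline\pi(u,\rho,[\gamma])=P_{u,\rho}$ on objects and $\varphi_{Z,h}\mapsto\varphi_{Z,h}$ on morphisms, which is manifestly functorial since morphisms of $\sPG$ are literally morphisms of $\Bun_G(X)$, and let the top functor send $\cP_{u,\rho,\gamma}\mapsto(u,\rho,[\gamma])$ and $\varphi_{Z,h,\eta}\mapsto\varphi_{Z,h}$; the latter is well defined on $2$-isomorphism classes because a $2$-morphism of $\Bun_\cG(X)$ exists only when the underlying $G$-bundle maps agree up to refinement. Composing the two recovers $\cP_{u,\rho,\gamma}\mapsto P_{u,\rho}$ and $\varphi_{Z,h,\eta}\mapsto\pi(\varphi_{Z,h,\eta})$, as required.

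I expect the representative-independence of the hom-sets to be the main obstacle: it is the only place where one genuinely needs that $\sim$-equivalent objects are joined by isomorphisms covering the identity, and where functoriality of $\pi$ is essential to ensure that conjugating a lift leaves the underlying $G$-bundle isomorphism unchanged. Everything else is bookkeeping with lifts and the fact that $\pi$ is a functor.
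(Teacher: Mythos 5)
Your proposal is correct and follows essentially the same route as the paper: the paper's proof also isolates the well-definedness of the hom-sets as the only substantive point, establishing it by composing a putative lift with the isomorphisms covering the identities that witness $\gamma_i\sim\gamma_i'$ (your $\psi_2\circ\varphi_{Z,h,\eta}\circ\psi_1^{-1}$), and dismisses the remaining groupoid axioms and the factorization through $\overline\pi$ as routine. The extra details you supply (the equivalence relation via Remark~\ref{remark: equivalence relation in cohomology}, lifting of identities, composites, and inverses) are exactly the bookkeeping the paper leaves implicit.
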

\bp
To see that the hom-sets~\eqref{eq:morphismsinsP} are well-defined, take different choices of representatives $(u_1,\rho_1,\gamma_1)$ and $(u_1, \rho_1, \gamma_1')$, respectively $(u_2, \rho_2, \gamma_2)$ and $(u_2, \rho_2, \gamma_2')$, i.e., there exist 1-morphisms
$$\cP_{u_1, \rho_1, \gamma_1} \simeq \cP_{u_1, \rho_1, \gamma_1'} \text{ and } \cP_{u_2, \rho_2, \gamma_2} \simeq \cP_{u_2, \rho_2, \gamma_2'}$$
covering the identity morphisms of the underlying $G$-bundles. Then by transitivity of isomorphisms, we see that a 1-morpshism $\cP_{u_1, \rho_1, \gamma_1}\xrightarrow{\sim} \cP_{u_2, \rho_2, \gamma_2}$ in $\Bun_\cG(X)$ exists if and only if there is a 1-isomorphism $\cP_{u_1, \rho_1, \gamma_1'}\xrightarrow{\sim} \cP_{u_2, \rho_2, \gamma_2'}$. Hence the hom sets are well-defined. Checking that composition is well-defined is routine. 

Finally, define the functor $\overline{\pi} \colon \sPG \to \Bun_G(X)$ as sending $(u, \rho, [\gamma]) \mapsto (u, \rho)$ on objects, and as the identity on morphisms. We observe that the functor $\pi \colon \Bun_\cG(X) \to \Bun_G(X)$ factors through $\sPG$.
\ep

\begin{rmk}
  For $X=\Sigma$ a connected closed oriented surface of genus $g \ge 1$, we also have a higher holonomy description of $\sP_\cG$, using Proposition \ref{prop:weakrepn}. That is, $\sP_\cG$ is equivalent to a groupoid $\sP_\cG^\hol$ with objects $({\rhohol}, [\gammahol])$ for $({\rhohol}, \gammahol) \colon * \sq \pi_1 \Sigma \to * \sq \cG$, where 
\begin{align*}
    \gammahol \sim \gammahol' \iff \text{ there is a natural transformation $ (1_G, \etahol) \colon ({\rhohol}, \gammahol) \to ({\rhohol}, \gammahol')$.} 
\end{align*}
The morphisms in $\sP_\cG^\hol$ are defined analogously to the definition of $\sP_\cG$. The equivalence $\iota \colon \Bicat(\pt\sq\pi_1\Sigma, \pt\sq\cG) \to \Bun_\cG(\Sigma)$ of Proposition \ref{prop:weakrepn} induces an equivalence $\overline{\iota} \colon \sP_\cG ^\hol \to \sP_\cG$, under which the functor $\overline{\pi}$ corresponds to a functor $\overline{\pi}^\hol$ which sends $(\rhohol, [\gammahol])$ to $P_\rhohol \in \Bun_G(\Sigma).$ 
\end{rmk}

\begin{ex}\label{ex:U(1)gerbe}
This is a continuation of Example~\ref{ex:Agerbes}, where we observed that for $\cG=\pt\sq \U(1)$, we have an equivalence $\Bun_{\pt\sq \U(1)}(X) \simeq \Gerbe_{\U(1)}(X)$. 
Multiplication of cocycles $\gamma, \gamma'$ induces a monoidal structure on $\sP_{* \sq \U(1)}$, corresponding to the tensor product of flat $\U(1)$-gerbes. The following boils down to the standard fact that isomorphism classes of (flat) gerbes on a manifold $X$ are in bijection $\check\rmH^2(X;\U(1))$.
\end{ex}

\begin{prop}\label{prop: PU(1) is H2 or U(2)}
\begin{enumerate}
\item Viewing $\check{\rmH}^2(X;\U(1))$ as a discrete category with the monoidal structure given by the usual group operation, there is a canonical strict equivalence of monoidal categories $\sP_{* \sq \U(1)} \simeq \check{\rmH}^2(X;\U(1))$.
\item If $X=\Sigma$ is an orientable surface, an orientation of $\Sigma$ determines a strict equivalence of monoidal categories
$\sP_{* \sq \U(1)} \simeq \U(1)$, where $\U(1)$ is viewed as a discrete monoidal category under the usual multiplication. 
\end{enumerate}
\end{prop}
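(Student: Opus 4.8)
The plan is to identify $\sP_{\pt\sq\U(1)}$ directly with the discrete monoidal category of degree-$2$ cohomology classes, and then (for part (2)) post-compose with the orientation isomorphism. First I would unpack Definition~\ref{defn:Pline} in the case $G=\{e\}$, $A=\U(1)$. Since $G$ is trivial, the datum $\rho$ is trivial, the underlying bundle $P_{u,\rho}$ is the (unique up to unique isomorphism) trivial bundle, and the conditions of Definition~\ref{defn:prinbund1} reduce $\gamma$ to a normalized $\U(1)$-valued \v Cech $2$-cocycle relative to the cover $u\colon Y\to X$; thus objects of $\sP_{\pt\sq\U(1)}$ are exactly equivalence classes of flat $\U(1)$-gerbes, compatibly with Examples~\ref{ex:Agerbes} and~\ref{ex:U(1)gerbe}. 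Because $\Bun_{\{e\}}(X)$ is the terminal groupoid, every non-empty hom-set of $\sP_{\pt\sq\U(1)}$ is a singleton: by~\eqref{eq:morphismsinsP} it consists of the unique morphism $\varphi_{Z,h}$ of the trivial bundle, present exactly when a $1$-morphism $\varphi_{Z,h,\eta}\colon \cP_{u_1,\gamma_1}\to\cP_{u_2,\gamma_2}$ exists.

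Next I would define the functor $\Phi\colon \sP_{\pt\sq\U(1)}\to \check{\rmH}^2(X;\U(1))$ sending $(u,[\gamma])$ to the cohomology class $[\gamma]$ and sending each (necessarily unique) morphism to the witnessed equality of classes. Well-definedness on objects is Remark~\ref{remark: equivalence relation in cohomology}: the equivalence relation defining $[\gamma]$ is precisely $[\gamma/\gamma']=1$, so $\Phi$ descends to these classes. For full faithfulness, note that with $h$ forced to be trivial the $1$-morphism condition~\eqref{eq:eta condition} becomes $d\eta=\gamma_1/\gamma_2$ after pullback to a common refinement, which is solvable precisely when $[\gamma_1]=[\gamma_2]$; hence $\Hom_{\sP}\big((u_1,[\gamma_1]),(u_2,[\gamma_2])\big)$ is a singleton when $[\gamma_1]=[\gamma_2]$ and empty otherwise, matching the hom-sets of the discrete category $\check{\rmH}^2(X;\U(1))$. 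Essential surjectivity follows because every class admits a normalized cocycle representative on a suitable cover (the gerbe classification cited in Example~\ref{ex:U(1)gerbe}). Finally, the monoidal structure of Example~\ref{ex:U(1)gerbe} is multiplication of cocycles, and the cohomology class of a product is the product of classes independently of the refinement used to form $\otimes$; thus $\Phi(A\otimes B)=\Phi(A)\cdot\Phi(B)$ on the nose and $\Phi$ is strictly monoidal, giving part~(1).

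For part~(2), a choice of orientation of the closed connected surface $\Sigma$ selects a fundamental class $[\Sigma]\in \rmH_2(\Sigma;\Z)$, and evaluation against it gives the group isomorphism $\check{\rmH}^2(\Sigma;\U(1))\xrightarrow{\sim}\U(1)$, $[\gamma]\mapsto\langle[\gamma],[\Sigma]\rangle$. Viewing both sides as discrete monoidal categories under multiplication this is a strict monoidal isomorphism, and post-composing with $\Phi$ yields the desired strict monoidal equivalence $\sP_{\pt\sq\U(1)}\simeq \U(1)$.

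I expect the one genuinely delicate point to be the \emph{strictness} claims rather than any single computation: one must check that the refinement choices hidden in the tensor product on $\sP_{\pt\sq\U(1)}$ do not obstruct $\Phi$ from being strictly (rather than merely weakly) monoidal. This works precisely because passing to cohomology classes erases the dependence on covers, so the structure isomorphisms of $\Phi$ can be taken to be identities. Everything else is an application of the gerbe classification together with Corollary~\ref{cor: lifting lemma with refinements}.
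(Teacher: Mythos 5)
Your proposal is correct and follows essentially the same route as the paper: define the functor $(u,[\gamma])\mapsto[\gamma]$, use Remark~\ref{remark: equivalence relation in cohomology} (equivalently Corollary~\ref{cor: lifting lemma with refinements}) to see that objects map well-definedly and that hom-sets are singletons exactly when the classes agree, observe that multiplication of cocycles descends to multiplication of classes, and obtain strictness from the target being discrete; part (2) is then the orientation-induced isomorphism $\check{\rmH}^2(\Sigma;\U(1))\simeq\U(1)$. The extra detail you supply on the $\eta$-condition and on why refinements cause no trouble for strict monoidality is consistent with, and slightly more explicit than, the paper's argument.
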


\bp
We define a functor $\sP_{*\sq \U(1)} \to \check{\rmH}^2(X;\U(1))$ by sending $(u, [\gamma])$ to the class of the cocycle $\gamma$ in $ \check{\rmH}^2(X;\U(1))$. By Remark \ref{remark: equivalence relation in cohomology}, this is well-defined. Furthermore, we note that $\Hom_{\sP_{*\sq \U(1)}}((u_1, [\gamma_1]), (u_2, [\gamma_2])$ is empty unless $[\gamma_1] =[\gamma_2]$, in which case it contains a single morphism corresponding to the identity automorphism of the principal $\{*\}$-bundle $X \times \{*\}\to X$. Our functor will then send this morphism to $\id_{[\gamma_1]}$. It then follows directly that this functor is essentially surjective and fully-faithful. It is also clear that it is compatible with the monoidal structures; strictness is immediate because the target is a discrete category.

Finally, in the case that $X=\Sigma$ is 2-dimensional and oriented we use that $\check{\rmH}^2(\Sigma;\U(1))\simeq \U(1)$, where the isomorphism of groups is specified by the choice of orientation. 
\ep

\begin{rmk}
In fact, in the case of $X = \Sigma$ a closed oriented surface, the monoidal category $\sP_{* \sq \U(1)}^\hol$ is \emph{equal} to the quotient of the group $2$-cocycles by exact $2$-cocyles, i.e. to the group cohomology $\rmH^2(\pi_1\Sigma; \U(1))$, and hence (because $\Sigma$ is a $K(\pi, 1)$) to the singular cohomology $\rmH^2(\Sigma; \U(1))$. 
\end{rmk}

\subsection{The $\U(1)$-action on $\sP_\cG$}
From now on, let $\cG= \cG(G, \U(1), \alpha)$, and let $\Sigma$ be a connected closed oriented surface of genus $g \ge 1$. In this subsection, we will construct an action of $\U(1)$ (or more specifically, $\sP_{*\sq\U(1)}$) on $\sP_\cG$. Morally, this is induced from a higher categorical action of $\Gerbe_{\U(1)}(\Sigma)$ on $\Bun_\cG(\Sigma)$, defined by twisting a principal bundle by a gerbe (see Proposition 4.24 of \cite{BCMNP}). 

For covers $u\colon Y\to \Sigma$ and $u'\colon Y'\to \Sigma$, let $u\cdot u'$ denote the mutual refinement $Y\times_\Sigma Y'\to \Sigma$. More generally, we recall from Example \ref{ex:Cech} the notation for a refinement of covers $u_i\colon Y_i\to \Sigma$,
$$
Y_1 \xleftarrow{v_1} Z\xrightarrow{v_2} Y_2,\quad  u_1\circ v_1=u_2\circ v_2\colon Z\to \Sigma.
$$

\begin{defn}\label{defn:action of U on sPG}
Define the functor $\gact \colon \sPG \times \sP_{*\sq\U(1)} \to \sPG$ whose value on objects and morphisms is 
\beq\label{eq:actfunctor}
    \begin{array}{ccc} \gact((u,\rho,[\gamma]),(u',[\gamma']))&:=& (u\cdot u',\rho,[\gamma\cdot \gamma'])\\
    \gact((Z,v_1, v_2, h), (Z', v_1', v_2')) &:=& (Z\times_\Sigma Z',v_1\cdot v_1', v_2\cdot v_2', h).
    \end{array}
\eeq
where $\gamma, \gamma', \rho$ and $h$ have all been pulled back to the refinement $u\cdot u'$ of the covers $u$ and $u'$. 
\end{defn}

\begin{lem}
    The assignments~\eqref{eq:actfunctor} determine a functor. 
\end{lem}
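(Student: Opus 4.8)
The plan is to verify the four conditions for $\gact$ to be a functor: that the object formula lands in $\sPG$ and is independent of the chosen representatives $\gamma,\gamma'$; that the morphism formula produces a genuine morphism of $\sPG$ with the correct source and target; and that $\gact$ respects identities and composition. First I would treat the object assignment. The second factor $(u',[\gamma'])$ lives in $\sP_{*\sq\U(1)}$, where the underlying group is trivial, so $\gamma'$ is an honest $2$-cocycle with $d\gamma'=1$. Pulling $\gamma$ and $\gamma'$ back to the mutual refinement $u\cdot u'$ and combining the object condition $\rho^*\alpha=d\gamma$ from Definition~\ref{defn:prinbund1} with $d\gamma'=1$ gives $d(\gamma\cdot\gamma')=\rho^*\alpha$; normalization is preserved since a product of normalized cochains is normalized, so $(u\cdot u',\rho,\gamma\cdot\gamma')$ is an object of $\Bun_\cG(\Sigma)$. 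For independence of representatives, if $\gamma\sim\tilde\gamma$ and $\gamma'\sim\tilde\gamma'$ then by Remark~\ref{remark: equivalence relation in cohomology} both ratios are cohomologically trivial, and since $[(\gamma\cdot\gamma')/(\tilde\gamma\cdot\tilde\gamma')]=[\gamma/\tilde\gamma]\cdot[\gamma'/\tilde\gamma']=1$, the class $[\gamma\cdot\gamma']$ is well defined.

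The crux is the morphism assignment, and this is the step I expect to be the main obstacle. A morphism of $\sPG$ is an isomorphism $\varphi_{Z,h}$ of underlying $G$-bundles admitting a lift to $\Bun_\cG(\Sigma)$; by part (2) of Theorem~\ref{thm: lifting lemma} such a lift exists precisely when $[{}^h\gamma_1/\gamma_2]=1$ in $\check{\rmH}^2(\Sigma;\U(1))$. The key observation is that the twisting operation distributes over multiplication by a gerbe cocycle: from the formula ${}^h\gamma_1=\gamma_1/\beta$ in the proof of Theorem~\ref{thm: lifting lemma}, the correction $\beta$ in~\eqref{eq:betaeq} depends only on $h$, $\rho_1$, $\rho_2$, and $\alpha$, not on $\gamma_1$, whence ${}^h(\gamma_1\cdot\gamma_1')={}^h\gamma_1\cdot\gamma_1'$. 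Therefore
\[
\left[\frac{{}^h(\gamma_1\cdot\gamma_1')}{\gamma_2\cdot\gamma_2'}\right]=\left[\frac{{}^h\gamma_1}{\gamma_2}\right]\cdot\left[\frac{\gamma_1'}{\gamma_2'}\right]=1,
\]
where the first factor is trivial because $\varphi_{Z,h}$ is a morphism of $\sPG$ and the second because a morphism of $\sP_{*\sq\U(1)}$ exists only when $[\gamma_1']=[\gamma_2']$ (Proposition~\ref{prop: PU(1) is H2 or U(2)}). By Theorem~\ref{thm: lifting lemma} again this guarantees a lift of $\varphi_{Z\times_\Sigma Z',h}$, so the output $(Z\times_\Sigma Z',v_1\cdot v_1', v_2\cdot v_2', h)$ is a bona fide morphism of $\sPG$. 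A direct comparison of its source and target against the object formula shows they agree with $\gact$ applied to the source and target pairs, completing this step.

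Finally I would verify functoriality. Since composition in $\sPG$ is inherited from $\Bun_\cG(\Sigma)$ and the morphism formula simply takes fibre products of the domains $Z,Z'$ while retaining the cochain $h$, compatibility with identities is immediate, and compatibility with composition reduces to the associativity and functoriality of fibre products together with the fact that composing the underlying isomorphisms $\varphi_{Z,h}$ is unaffected by the gerbe data in the second factor. These are routine bookkeeping checks, so the only genuine content is the cohomological argument of the previous paragraph.
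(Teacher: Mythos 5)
Your proof is correct, but the key step is handled by a genuinely different argument than the paper's. For the morphism assignment the paper works constructively: from the assumed lifts $\varphi_{Z,h,\eta}$ in $\Bun_\cG(\Sigma)$ and $\varphi_{Z',\eta'}$ in $\Gerbe_{\U(1)}(\Sigma)$ it directly forms the product $1$-morphism $\varphi_{Z\times_\Sigma Z',\,h,\,\eta\cdot\eta'}\colon \cP_{u_1\cdot u_1',\rho_1,\gamma_1\gamma_1'}\to\cP_{u_2\cdot u_2',\rho_2,\gamma_2\gamma_2'}$ (this is the gerbe action on $\cG$-bundles, citing \cite[Proposition 4.24]{BCMNP}), and the mere existence of this lift shows the output is a morphism of $\sPG$. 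You instead run everything through the cohomological obstruction of Theorem~\ref{thm: lifting lemma}(2): you note that the correction $\beta$ of~\eqref{eq:betaeq} is independent of $\gamma_1$, so ${}^h(\gamma_1\gamma_1')={}^h\gamma_1\cdot\gamma_1'$, and then the obstruction class factors as $[{}^h\gamma_1/\gamma_2]\cdot[\gamma_1'/\gamma_2']$, each factor vanishing by hypothesis. Both routes are valid and of comparable length; yours is self-contained within the paper's Theorem~\ref{thm: lifting lemma} and avoids having to verify that $\eta\cdot\eta'$ satisfies condition~\eqref{eq:eta condition}, while the paper's version produces an explicit lift (which is reused elsewhere, e.g.\ in the proof of Proposition~\ref{prop:U(1)bundle}) rather than only its existence. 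One small point of care in your version: ${}^h\gamma_1$ is only well defined up to an exact cocycle, so your identity ${}^h(\gamma_1\gamma_1')={}^h\gamma_1\cdot\gamma_1'$ should be read at the level of the specific representative $\gamma/\beta$, or of cohomology classes of ratios --- which is all you use, so the argument goes through.
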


\bp
It is easy to see that this is well-defined on equivalence classes of objects. 

Given morphisms $(Z,v_1, v_2, h)\colon (u_1, \rho_1, [\gamma_1]) \to (u_2, \rho_2, [\gamma_2])$ in $\sPG$ and $(Z', v_1', v_2')\colon (u_1',[\gamma_1'])\to (u_2',[\gamma_2'])$ in $\sP_{\pt\sq\U(1)}$, by definition there exist 1-morphisms
\begin{align}\label{eq: choice of representatives}
\varphi_{Z,h,\eta}\colon \cP_{u_1,\rho_1,\gamma_1}\to \cP_{u_2,\rho_2,\gamma_2}\in \Bun_\cG(\Sigma),\qquad \varphi_{Z',\eta'}\colon \mathcal{A}_{u_1',\gamma_1'}\to \mathcal{A}_{u_2',\gamma_2'}\in \Gerbe_{\U(1)}(\Sigma) 
\end{align}
From these lifts, we form the 1-morphism in $\Bun_\cG(\Sigma)$~\cite[Proposition 4.24]{BCMNP}
$$
\varphi_{Z\times_\Sigma Z',h,\eta\cdot \eta'}\colon \cP_{u_1\cdot u_1',\rho_1,\gamma_1\cdot\gamma_1'}\to \cP_{u_2\cdot u_2',\rho_2,\gamma_2\cdot \gamma_2'}
$$
where $u_i\cdot u_i'\colon Y_i\times_\Sigma Y_i'\to \Sigma$ are the refinements of covers. The existence of $\varphi_{Z\times_\Sigma Z',h,\eta\cdot \eta'}$ implies that $(Z\times_\Sigma Z',v_1 \cdot v_1', v_2 \cdot v_2', h)$ is indeed a morphism $(u_1\cdot u_1',\rho_1,[\gamma_1\cdot\gamma_1'])\to  (u_2\cdot u_2',\rho_2,[\gamma_2\cdot\gamma_2'])$ in $\sPG$. It is easy to see that \eqref{eq:actfunctor} preserves identity morphisms and composition. 
\ep

 \begin{ex}\label{ex: action for same cover}
     For $(u,\rho,[\gamma])\in \sPG$ and $(u',[\gamma'])\in\sP_{*\sq \U(1)}$, if the surjective submersions agree, i.e. $u=u'$, then $\gact((u, \rho, [\gamma]), (u', [\gamma'])) \cong (u, \rho, [\gamma\cdot \gamma'])$ in $\sPG$. 

     Indeed, by definition $\gact((u, \rho, [\gamma]), (u, [\gamma'])) = (Y \times_\Sigma Y \to \Sigma, p_1^* \rho, [p_1^* \gamma \cdot p_2^*\gamma'])$;  however, $Y \to \Sigma$ factors through the diagonal embedding as 
     \begin{align*}
         Y \xrightarrow{\Delta} Y \times_\Sigma Y \xrightarrow{u\cdot u} Y,
     \end{align*}
     so that 
     \begin{align*}
         \gact((u, \rho, [\gamma]), (u, [\gamma']))  & = (Y \times_\Sigma Y \to \Sigma, p_1^* \rho, [p_1^* \gamma \cdot p_2^*\gamma']) \\
         & \cong \Delta^*(Y \times_\Sigma Y \to \Sigma, p_1^* \rho, [p_1^* \gamma \cdot p_2^*\gamma']) \\
         & = (Y \to \Sigma, \rho, [\gamma \cdot \gamma'])
     \end{align*}
     as claimed. 
\end{ex}

Using the natural equivalences $\sP^\hol_\cG \simeq \sP_\cG$ and $\sP^\hol_{*\sq \U(1)} \simeq \sP_{*\sq \U(1)}$, the induced action functor in the holonomy presentations behaves as follows.
\begin{defn}
    Define the functor $\gact^\hol\colon\sP^\hol_\cG\times \sP^\hol_{*\sq\U(1)}\to \sP^\hol_\cG$ whose value on objects and morphisms is

   \beq\label{eq:actholfunctor}
    \begin{array}{ccc} \gact^\hol((\rhohol, [\gammahol]),([\gammahol']))&:=& (\rhohol,[\gammahol\cdot \gammahol'])\\
    \gact^\hol(\hhol, 1) &:=& \hhol.
    \end{array}
\eeq    
\end{defn}

\begin{lemma}
    Under the equivalence $\sP^\hol_{*\sq \U(1)} \to \sP_{*\sq \U(1)}$, the equivalence $\overline{\iota} \colon \sP^\hol_\cG \to \sP_\cG$ is equivariant. 
\end{lemma}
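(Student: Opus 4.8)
The plan is to produce the natural isomorphism witnessing equivariance directly on objects and then check naturality and coherence. By the definition of equivariant functor, I must compare the two functors $\sP^\hol_\cG \times \sP^\hol_{*\sq\U(1)} \to \sP_\cG$ obtained by (i) first applying $\gact^\hol$ and then $\overline{\iota}$, versus (ii) first applying $\overline{\iota}$ together with the companion equivalence $\sP^\hol_{*\sq\U(1)}\to\sP_{*\sq\U(1)}$ (the trivial-$G$ case of $\overline\iota$), and then applying $\gact$.

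First I would evaluate both composites on an object $((\rhohol,[\gammahol]),[\gammahol'])$. Recall from Proposition~\ref{prop:weakrepn} that $\overline\iota$ sends a weak representation to the $\cG$-bundle defined relative to the universal cover $\tilde u\colon \tilde\Sigma\to\Sigma$, via $\rho(y,y\cdot a)=\rhohol(a)$ and $\gamma(y,y\cdot a,y\cdot ab)=\gammahol(a,b)$ as in~\eqref{eq: definition of gammahol}; the companion equivalence sends $[\gammahol']$ to $(\tilde u,[\gamma'])$ by the same formula. Since $\gact^\hol$ keeps the universal cover fixed and simply multiplies cocycle classes, composite (i) yields $(\tilde u,\rho,[\gamma\cdot\gamma'])$, using that the pointwise product $\gammahol\cdot\gammahol'$ transports to $\gamma\cdot\gamma'$ under the universal-cover formula. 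On the other hand, $\gact$ takes the mutual refinement $\tilde u\cdot\tilde u=\tilde\Sigma\times_\Sigma\tilde\Sigma\to\Sigma$, so composite (ii) yields $(\tilde u\cdot\tilde u,\,p_1^*\rho,\,[p_1^*\gamma\cdot p_2^*\gamma'])$. These two objects carry the same cohomology class and differ only in the chosen cover.

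This is exactly the situation of Example~\ref{ex: action for same cover}: both factors feeding into $\gact$ use the single cover $\tilde u$, so pulling back along the diagonal $\Delta\colon\tilde\Sigma\to\tilde\Sigma\times_\Sigma\tilde\Sigma$ gives a canonical isomorphism $(\tilde u\cdot\tilde u,p_1^*\rho,[p_1^*\gamma\cdot p_2^*\gamma'])\cong(\tilde u,\rho,[\gamma\cdot\gamma'])$ in $\sP_\cG$. I take this diagonal comparison as the equivariance isomorphism $\Phi$ on objects. Naturality in morphisms is then a routine check: a morphism in either $\sP^\hol_\cG$ or $\sP^\hol_{*\sq\U(1)}$ transports to \v Cech data of the form $(Z,v_1,v_2,h)$ on the nose, and the diagonal comparison intertwines the two descriptions because $\Delta$ is natural with respect to refinements.

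The main thing to verify---and the only place where any real work is needed---is coherence: that $\Phi$ is compatible with the associator and unitor data built into the two monoidal actions $\gact^\hol$ and $\gact$. Concretely, iterating the action produces on the holonomy side the single cover $\tilde u$ again, and on the \v Cech side the triple refinement $\tilde\Sigma\times_\Sigma\tilde\Sigma\times_\Sigma\tilde\Sigma\to\Sigma$; the required compatibility reduces to the statement that the two ways of comparing this triple fiber product back to $\tilde\Sigma$ (via the double diagonal, or via a single diagonal after a partial diagonal) agree. This is precisely the functoriality of pullback of \v Cech cochains along the diagonal maps, which follows from the universal property of the fiber products, so the coherence diagrams commute and $\overline\iota$ is equivariant.
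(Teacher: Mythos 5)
Your argument is correct and follows essentially the same route as the paper: the paper's proof simply cites the definitions of the two action functors together with Example~\ref{ex: action for same cover} (the diagonal comparison of $\tilde u\cdot\tilde u$ with $\tilde u$), which is exactly the isomorphism you construct and verify. Your write-up just makes explicit the naturality and coherence checks that the paper leaves implicit.
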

\bp
This follows from the definition of the actions, together with Example \ref{ex: action for same cover}.
\ep

From this, the following is immediate.

\begin{lem}\label{prop: H2 and U1 act}
    The functor $\gact^\hol \colon \sP^\hol_\cG \times \sP^\hol_{* \sq \U(1)} \to \sP^\hol_\cG$ determines a strict action of $\sP^\hol_{*\sq\U(1)}$ on $\sP^\hol_\cG$.

    Hence, the functor $\gact\colon \sPG\times \sP_{*\sq\U(1)}\to \sPG$ gives a (weak) action of the monoidal category $\sP_{*\sq\U(1)}$ on the category $\sPG$, and by Proposition~\ref{prop: PU(1) is H2 or U(2)} the group $\U(1)$ acts on $\sP_\cG$.
\end{lem}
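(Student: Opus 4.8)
The plan is to prove the strict action claim directly in the holonomy presentation via the explicit formula~\eqref{eq:actholfunctor}, then transport the conclusion to $\sP_\cG$ along the equivalence $\overline{\iota}$, and finally invoke Proposition~\ref{prop: PU(1) is H2 or U(2)} to replace the structure monoidal category by the group $\U(1)$.

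For the first assertion, recall that a strict (right) action of a monoidal category $(\mathcal{M},\otimes,\one)$ on a category $\mathcal{C}$ is a functor $\act\colon \mathcal{C}\times\mathcal{M}\to\mathcal{C}$ for which the associativity and unit constraints $\act(\act(c,m),m')=\act(c,m\otimes m')$ and $\act(c,\one)=c$ hold on the nose. Having already established that $\gact^\hol$ is a functor, I would verify these two identities. On objects, the monoidal product on $\sP^\hol_{*\sq\U(1)}$ is multiplication of $\U(1)$-valued group $2$-cochains, so
\[
\gact^\hol(\gact^\hol((\rhohol,[\gammahol]),[\gammahol']),[\gammahol''])=(\rhohol,[\gammahol\cdot\gammahol'\cdot\gammahol''])=\gact^\hol((\rhohol,[\gammahol]),[\gammahol'\cdot\gammahol'']),
\]
and for the class $[1]$ of the trivial cochain (the monoidal unit) one has $\gact^\hol((\rhohol,[\gammahol]),[1])=(\rhohol,[\gammahol])$; both identities hold strictly because cochain multiplication is strictly associative and unital. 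On morphisms there is nothing further to check: $\sP^\hol_{*\sq\U(1)}$ has only identity morphisms (it is equivalent to the discrete monoidal category $\U(1)$), and $\gact^\hol$ leaves the datum $\hhol$ untouched, so compatibility with composition and units is immediate. The pentagon and triangle coherences hold automatically since all constraint isomorphisms are identities.

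For the ``hence'' I would transport the strict action along the preceding lemma, which asserts that $\overline{\iota}\colon \sP^\hol_\cG\to\sP_\cG$ is an equivalence of categories, equivariant for the equivalence $\sP^\hol_{*\sq\U(1)}\to\sP_{*\sq\U(1)}$. It is a general fact that transporting a strict action of a monoidal category along an equivalence produces a (generally weak) action, whose associator and unitor are assembled from the equivariance isomorphism together with the unit and counit of $\overline{\iota}$; this yields the weak action $\gact$ of $\sP_{*\sq\U(1)}$ on $\sP_\cG$. Finally, part (2) of Proposition~\ref{prop: PU(1) is H2 or U(2)} gives, for the chosen orientation of $\Sigma$, a strict monoidal equivalence $\sP_{*\sq\U(1)}\simeq\U(1)$ with $\U(1)$ the discrete monoidal category, and precomposing with it produces a weak action of the group $\U(1)$ on $\sP_\cG$.

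The only genuinely non-formal point is the passage from a strict to a merely weak action: since $\overline{\iota}$ is an equivalence but not an isomorphism of categories, strictness is not preserved, and I would take care that the coherence data on $\sP_\cG$ is recorded correctly. Everything else reduces to the strict associativity and unitality of multiplication of $\U(1)$-valued cochains, which is why the statement is essentially immediate once the holonomy presentation is available.
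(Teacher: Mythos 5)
Your proposal is correct and follows essentially the same route as the paper, which simply declares the lemma ``immediate'' from the preceding equivariance lemma: strictness of $\gact^\hol$ comes from the strict associativity and unitality of multiplying $\U(1)$-valued group $2$-cochains, and the weak action on $\sP_\cG$ is obtained by transporting along the equivariant equivalence $\overline{\iota}$ and then applying Proposition~\ref{prop: PU(1) is H2 or U(2)}. You have merely filled in the details the paper leaves implicit, including the correct observation that strictness is generally lost under transport along an equivalence.
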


\begin{prop}\label{prop:U(1)bundle}
    The functor $\gact\colon \sPG\times\sP_{\pt\sq\U(1)} \to\sPG$ witnesses $\sPG$ as a principal $U(1)$-bundle over $\Bun_G(\Sigma)$.
\end{prop}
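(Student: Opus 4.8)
The plan is to verify directly that $\overline{\pi}\colon \sPG\to\Bun_G(\Sigma)$, together with the $\U(1)$-action encoded by $\gact$ (via the identification $\sP_{*\sq\U(1)}\simeq\U(1)$ of Proposition~\ref{prop: PU(1) is H2 or U(2)} and Lemma~\ref{prop: H2 and U1 act}), satisfies the three defining properties of a principal bundle: $\overline{\pi}$ is a covering, its fibers are $\U(1)$-torsors, and it is locally trivial over an atlas for $\Bun_G(\Sigma)$. Throughout I would work in the holonomy presentation $\sPG^\hol$ with $\gact^\hol$ from Proposition~\ref{prop:weakrepn}, which is legitimate by the equivariant equivalence $\overline{\iota}$ and has the advantage of a single canonical cover, so that $\gact^\hol$ multiplies cocycles on the nose and no refinement bookkeeping is needed (compare Example~\ref{ex: action for same cover}).

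First, essential surjectivity of $\overline{\pi}$ is immediate: every $\rhohol\colon\pi_1\Sigma\to G$ admits a lift $\gammahol$ with $d\gammahol=\rhohol^*\alpha$, since $\rho^*\alpha$ represents a class in $\rmH^3(\Sigma;\U(1))=0$ (see Remark~\ref{rmk: surfaces}), so the fiber over each $\rhohol$ is nonempty. Second, I would show each fiber is a $\U(1)$-torsor. Fixing $\rhohol$, the objects of $\sPG^\hol$ over it are the classes $[\gammahol]$, and by Remark~\ref{remark: equivalence relation in cohomology} (via Corollary~\ref{cor: lifting lemma with refinements}) two lifts represent the same object exactly when $[\gammahol/\gammahol']=1\in\check{\rmH}^2(\Sigma;\U(1))$; hence the fiber is a torsor under $\check{\rmH}^2(\Sigma;\U(1))$, which an orientation identifies with $\U(1)$ by Proposition~\ref{prop: PU(1) is H2 or U(2)}. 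The action $\gact^\hol$ sends $(\rhohol,[\gammahol])$ to $(\rhohol,[\gammahol\cdot\gammahol'])$, which is precisely the regular action of $\check{\rmH}^2(\Sigma;\U(1))$ on this torsor, so it is free and transitive on each fiber.

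Third, I would establish unique lifting of morphisms together with $\U(1)$-equivariance, which simultaneously yields local triviality. Given a morphism $\varphi\colon P_{\rhohol_1}\to P_{\rhohol_2}$ of $G$-bundles (in holonomy terms, $g\in G$ conjugating $\rhohol_1$ to $\rhohol_2$) and a lift $(\rhohol_1,[\gammahol_1])$ of its source, Theorem~\ref{thm: lifting lemma}(1) produces the lift ${}^g\gammahol_1$ and a $1$-morphism covering $\varphi$, while Theorem~\ref{thm: lifting lemma}(2) shows that $[\gammahol_2]=[{}^g\gammahol_1]$ is the unique class over the target admitting such a lift. Thus $\varphi$ lifts to a unique morphism of $\sPG^\hol$ out of each object over its source. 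Equivariance follows from the explicit twist formula: since ${}^g\gammahol_1=\gammahol_1/\beta$ with $\beta$ depending only on $g,\rhohol_1,\rhohol_2$ and not on $\gammahol_1$ (the formula~\eqref{eq:betaeq}), we obtain ${}^g(\gammahol_1\cdot\gammahol')={}^g\gammahol_1\cdot\gammahol'$, so the lifting map intertwines the $\U(1)$-action. Finally, choosing for each $\rhohol$ a representative $\gammahol$ defines a section of $\overline{\pi}^\hol$ over the object-set $\Hom(\pi_1\Sigma,G)$; by the torsor structure this trivializes $\sPG^\hol$ over the atlas $\Hom(\pi_1\Sigma,G)\twoheadrightarrow\Bun_G(\Sigma)$ as $\Hom(\pi_1\Sigma,G)\times\U(1)$, establishing local triviality, while the unique equivariant lifts supply the transition data as a $\U(1)$-valued cocycle.

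The main obstacle is the bookkeeping in this last step: one must check that the unique lifts are functorial, i.e. respect composition of morphisms in $\Bun_G(\Sigma)$, and are independent of the auxiliary choices ($\eta$, refinements, and the chosen representatives $\gammahol$), so that they genuinely assemble into a well-defined principal-bundle structure rather than merely a fiberwise torsor. Once the twist formula reduces equivariance to the $\gammahol$-independence of $\beta$, this becomes a direct—if somewhat tedious—verification using the composition law in $\Bun_\cG(\Sigma)$ and the defining relations of Definition~\ref{defn:prinbund1}, which I would organize so as to reuse the computations already carried out in the proofs of Theorem~\ref{thm: lifting lemma} and Lemma~\ref{prop: H2 and U1 act}.
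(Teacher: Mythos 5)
Your proposal is correct and follows essentially the same strategy as the paper's proof: fix a section of $\overline{\pi}$ over a set-cover of $\Bun_G(\Sigma)$, use Theorem~\ref{thm: lifting lemma} to show that the induced equivariant functor from $(\text{cover})\times\U(1)$ to the weak pullback of $\sPG$ is essentially surjective, and use the fact that the relevant hom-sets are empty or singletons for full faithfulness. The only cosmetic difference is that you trivialize over the holonomy cover $\Hom(\pi_1\Sigma,G)$ while the paper uses the \v{C}ech cover $\check{C}^1(\Sigma,G)$; since Definition~\ref{defn:U1bundle} only requires one trivializing epimorphism, either choice is legitimate.
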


\bp
See Definition~\ref{defn:U1bundle} for the definition of a $\U(1)$-bundle over the groupoid $\Bun_G(\Sigma)$. We take the epimorphism $\cc\colon \check{C}^1(\Sigma,G)\to \Bun_G(\Sigma)$ as in Example~\ref{ex:Cech}, and it remains to show that the pullback $\cc^*\sP_\cG=\cY$ is equivariantly equivalent to $\check{C}^{1}(\Sigma,G)\times \U(1)$. In fact we will show that it is equivariantly equivalent to $\check{C}^{1}(\Sigma, G) \times \sP_{* \sq U(1)}$ and then compose with the monoidal equivalence $\sP_{* \sq U(1)} \simeq \U(1)$. Consider the 2-pullback square

 \begin{center}
     \begin{tikzcd}
     \cY \arrow[r]\arrow[d] & \sP_\cG\arrow[d]\\
     \check{C}^1(\Sigma,G) \arrow[r]& \Bun_G(\Sigma).
    \end{tikzcd}
 \end{center}

 We begin by unpacking the pullback $\cY$. The objects of $\cY$ consist of triples $(u,\rho)\in \check{C}^1(\Sigma,G), (u',\rho', [\gamma'])\in\sP_\cG$ and $\iso\colon P_{u',\rho'}\to P_{u,\rho}$ in $\Bun_G(\Sigma)$. As $\check{C}^1(\Sigma,G)$ is a set, a morphism in $\cY$ consists of a morphism $(Z,v_1,v_2,h)\colon (u'_1,\rho'_1, [\gamma'_1])\to (u'_2,\rho'_2,[\gamma'_2])$ in $\sP_\cG$ that is compatible with the morphisms $\iso_1,\iso_2$; i.e., it fits in a commutative diagram
\begin{center}
    \begin{tikzcd}
        P_{u_1', \rho'_1,}\arrow[rr, "\iso_{Z,h}"'] \arrow[dr, "\iso_1"'] & &P_{u'_2,\rho'_2}\arrow[dl,"\iso_2"]\\
        & P_{u, \rho}.
    \end{tikzcd}
\end{center}
We remark that because of the requirement imposed by this diagram, the morphisms between each pair of objects of $\cY$ are unique if they exist; thus $\cY$ is equivalent to a set. The action of $\sP_{* \sq \U(1)}$ on $\cY$ is given as follows: 
\begin{align*}
    ((u, \rho), (u', \rho', [\gamma']), \varphi) \cdot (u'', [\gamma'']) = ((u, \rho), (u'\cdot u'', \rho', [\gamma'\cdot \gamma'']), \varphi \circ \varphi_{Y' \times_X Y'', \proj_{Y'}, \id_{Y' \times_X Y''}, 1_g}^{-1} ),
\end{align*}
where $\varphi_{Y' \times_X Y'', \proj_{Y'}, \id_{Y' \times_X Y''}, 1_g} \colon P_{u', \rho'} \to P_{u'\cdot u'', \rho'}$ is the canonical isomorphism of Example \ref{rmk: pullback gives morphism}.

To define a functor from $\check{C}^1(\Sigma, G) \times \sP_{* \sq U(1)}$ to $\cY$, we choose a section of $\overline{\pi}$ over $\check{C}^1(\Sigma, G)$, which is equivalent to a functor $\sigma \colon \check{C}^1(\Sigma, G) \to \sP_\cG$ that for each $(u, \rho) \in \check{C}^1(\Sigma, G)$ fixes $(u, \rho, [\gamma_\rho]) \in \sPG$. Then we consider the diagram

\begin{center}
    \begin{tikzcd}
    \check{C}^1(\Sigma,G)\times\sP_{* \sq U(1)}\arrow[ddr, bend right=20, "p_1"']\arrow[dr, dashed] \arrow[r, "\sigma"] & \sPG \times \sP_{* \sq \U(1) \arrow[dr, "\gact"]}&\\
        &\cY \arrow[r]\arrow[d] & \sP_\cG\arrow[d, "\overline{\pi}"]\\
        &\check{C}^1(\Sigma,G) \arrow[r]& \Bun_G(\Sigma).
    \end{tikzcd}
\end{center}

It is easy to see that the outer square commutes weakly, with 2-commuting data given by the morphisms induced by refinements of covers from Example \ref{rmk: pullback gives morphism}; hence this diagram induces the dashed arrow. Because the composition $\check{C}^1(\Sigma, G) \times \sP_{* \sq U(1)} \to \sPG$ is $\sP_{* \sq U(1)}$-equivariant, the dashed arrrow is too. It remains to show that it is an equivalence of categories. 

We can see that it is essentially surjective as follows. Given an object $((u, \rho), (u', \rho', [\gamma'], \varphi)) \in \cY$, choose representatives $\gamma'$ of the class $[\gamma']$ and $(Z, v_1, v_2, h)$ of the morphism $\varphi$, and pull the data $\rho, \rho', \gamma'$ back to the cover $Z$. Theorem \ref{thm: lifting lemma} provides lifts $(u, \rho, {}^h\gamma')$ of $(u, \rho)$ and $\varphi_{Z, h, \eta} \colon \cP_{u', \rho', \gamma'} \to \cP_{u, \rho, {}^h\gamma'}$ of $\varphi_{Z, h}$. In particular, $(Z, [{}^h\gamma'/\gamma_\rho]) \in \cP_{* \sq \U(1)}$, so that $((u, \rho), (Z \to \Sigma,  [{}^h\gamma'/\gamma_\rho]))$ is an object of $\check{C}^1(\Sigma, G) \times \sP_{* \sq \U(1)}$; it is straightforward to see that its image under the dashed arrow is isomorphic to $((u, \rho), (u', \rho', [\gamma'], \varphi)) \in \cY$. 

It is also easy to check that the dashed arrow is fully faithful: since the hom-sets of both source and target are either empty or singleton sets, it suffices to observe that the dashed arrow sends a non-isomorphic pair of objects in the source to a non-isomorphic pair of objects in the target. 
\ep

\begin{rmk}
    Morally, the ideas of this proof work on the level of the $\Gerbe_{\U(1)}(\Sigma)$-action on $\Bun_\cG(\Sigma)$: beginning with a section $\sigma \colon \check{C}^1(\Sigma, G) \to \Bun_\cG(\Sigma)$ of $\pi$, one obtains a trivialization of the pullback of $\Bun_\cG(\Sigma)$ to $\check{C}^1(\Sigma, G)$. 
\end{rmk}

\subsection{The mapping class group action on $\sPG$} 

From \cite[Corollary~4.11]{BCMNP}, the bicategory of $\cG$-bundles forms a 2-stack, and in particular $\cG$-bundles and automorphisms of $\cG$-bundles pull back along maps between smooth manifolds. This defines an action of $\Diff(\Sigma)$ on $\Bun_\cG(\Sigma)$, which descends to an action on $\sP_\cG$ as follows. 

\begin{defn}
Viewing the set $\Diff(\Sigma)$ as a discrete category, define a functor
\beq\label{eq:Diffaction}
\sP_\cG\times \Diff(\Sigma) \to \sP_\cG,\qquad ((u,\rho,[\gamma]), f)\mapsto (f^{-1}\circ u,\rho,[\gamma])
\eeq
that on representatives of equivalence classes pulls back $\cG$-bundles and $\cG$-bundle isomorphisms along a diffeomorphism $f\colon \Sigma\to \Sigma$. Here we identify $\Sigma \times_{f,\Sigma, u} Y \to \Sigma$ with $Y \xrightarrow{f^{-1} \circ u} \Sigma$ via the projection map $\Sigma \times_{f, \Sigma, u} Y \to Y$.

On morphisms, note that at the level of $\cG$-bundles, we have
\beq
&&\Big((Z, v_1, v_2, (h, \eta)) \colon (u_1 \colon Y_1 \to \Sigma, \rho_1, \gamma_1) \to (u_2 \colon Y_2 \to \Sigma, \rho_2, \gamma_2)\Big)\cdot f\nonumber\\
&&=\Big((Z, v_1, v_2, (h, \eta)) \colon (f^{-1} \circ u_1 \colon Y_1 \to \Sigma, \rho_1, \gamma_1) \to (f^{-1}\circ u_2 \colon Y_2 \to \Sigma, \rho_2, \gamma_2)\Big).\nonumber
\eeq
This uses the equality $f^{-1} \circ u_1 \circ v_1 = f^{-1} \circ u_2 \circ v_2$ which follows from $Z$ being compatible with the covers $u_1$ and $u_2$, i.e., $u_1 \circ v_1 = u_2 \circ v_2$. Taking equivalence classes in $\sP_\cG$ removes the data $\eta$ and passes to a map between the equivalence classes associated to $[\gamma_1]$ and $[\gamma_2]$: 
\begin{align*}
    \Big((Z, v_1, v_2, h) \colon (u_1, \rho_1, [\gamma_1]) \to (u_2, \rho_2, [\gamma_2] \Big) \cdot f = \Big((Z, v_1, v_2, h) \colon (f^{-1} \circ u_1, \rho_1, [\gamma_1]) \to (f^{-1} \circ u_2, \rho_2, [\gamma_2]) \Big).
\end{align*}
\end{defn}
The following is then immediate. 

\begin{lem}
The functor \eqref{eq:Diffaction} is well-defined: the pullback of $\cG$-bundles and $\cG$-bundle isomorphisms is well-defined on equivalence classes in $\sP_\cG$. Furthermore, this functor gives a strict action of $\Diff(\Sigma)$ on $\sP_\cG$, and the functor $\overline{\pi} \colon \sPG \to \Bun_G(\Sigma)$ is strictly $\Diff(\Sigma)$-equivariant. 
\end{lem}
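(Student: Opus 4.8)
The plan is to reduce all three assertions to the already-established facts for the \v Cech cocycle presentation, using the key observation that the functor~\eqref{eq:Diffaction} alters only the cover map $u\mapsto f^{-1}\circ u$ and leaves every cochain datum $\rho,\gamma,h,\eta$ literally unchanged. This is the $\cG$-bundle analogue of the action on $\check{C}^1(\Sigma,G)$ from Example~\ref{ex:Cech}, and the three claims will follow in parallel with Proposition~\ref{prop: cech equivariance}.

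First, for well-definedness on equivalence classes, I would note that an isomorphism $\varphi_{Z,1_G,\eta}\colon \cP_{u,\rho,\gamma}\to \cP_{u,\rho,\gamma'}$ witnessing $\gamma\sim\gamma'$ in~\eqref{eq:equivalencerelation} is, after pulling back along $f$, the same tuple $(Z,v_1,v_2,1_G,\eta)$ now regarded as an isomorphism $\cP_{f^{-1}\circ u,\rho,\gamma}\to \cP_{f^{-1}\circ u,\rho,\gamma'}$ covering $\id_{P_{f^{-1}\circ u,\rho}}$. The condition~\eqref{eq:eta condition} refers only to the maps $v_i$ and the cochains, never to the cover map $u$ itself, so it continues to hold; hence the relation~\eqref{eq:equivalencerelation} is preserved and~\eqref{eq:Diffaction} descends to equivalence classes. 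Equivalently, via Remark~\ref{remark: equivalence relation in cohomology}, the class $[\gamma/\gamma']\in\check{\rmH}^2(\Sigma;\U(1))$ is sent to itself under the isomorphism on \v Cech cohomology induced by $f$, so its triviality is preserved.

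Next, strictness follows from the same computation as in Proposition~\ref{prop: cech equivariance}: since $((u,\rho,[\gamma])\cdot f_1)\cdot f_2=(f_2^{-1}\circ f_1^{-1}\circ u,\rho,[\gamma])=((f_1\circ f_2)^{-1}\circ u,\rho,[\gamma])$ on the nose, and the morphism data $(Z,v_1,v_2,h)$ are untouched, the assignment is a strict right action rather than a pseudo-action. Equivariance of $\overline{\pi}$ is then immediate on objects and morphisms: both $\overline{\pi}((u,\rho,[\gamma])\cdot f)$ and $\overline{\pi}(u,\rho,[\gamma])\cdot f$ equal $P_{f^{-1}\circ u,\rho}$, matching the strict $\Diff(\Sigma)$-action on the \v Cech presentation, and $\overline{\pi}$ acts as the identity on the morphisms $\varphi_{Z,h}$, which the $\Diff(\Sigma)$-action does not change.

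The one point deserving care---and the reason the lemma is not purely formal---is that forgetting $\eta$ in the passage from $\Bun_\cG(\Sigma)$ to $\sP_\cG$ must commute with pullback along $f$. This holds because both operations act trivially on the $(Z,v_1,v_2,h)$ part of a $1$-morphism, and the $2$-stack property \cite[Corollary~4.11]{BCMNP} guarantees that pullback is a genuine $2$-functor to begin with; so there is no compatibility obstruction, in line with the claim that the result is immediate.
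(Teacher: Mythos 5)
Your proof is correct and matches the paper's intent: the paper offers no written proof at all (the lemma is declared ``immediate'' after the definition), and your argument correctly supplies the details, with the key observation being exactly the right one---the action of $f$ changes only the structure map $u\mapsto f^{-1}\circ u$ while leaving the fiber products $Y^{[n]}$, $Z^{[n]}$ and all cochain data $\rho,\gamma,h,\eta$ (hence all defining conditions and the equivalence relation~\eqref{eq:equivalencerelation}) literally unchanged, which yields well-definedness, strictness, and strict equivariance of $\overline{\pi}$ in one stroke, in parallel with Proposition~\ref{prop: cech equivariance}.
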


\begin{defn}\label{eq: diff action on hol P}
    Viewing the set $\Diff(\Sigma)$ as a discrete category, define a functor 
    \begin{align*}
        \sPG^\hol \times \Diff(\Sigma) & \to \sPG^\hol\\
        \text{ on objects: } ((\rhohol, [\gammahol]), f) & \mapsto (\rhohol \circ f_*, [\gammahol \circ (f_*\times f_*)]) \\
        \text{ on morphisms: } (\hhol, f) & \mapsto (\hhol \circ f_*),
    \end{align*}
    for $f_* \colon \pi_1 \Sigma \to \pi_1 \Sigma$ the induced homomorphism. 
\end{defn}

\begin{lem}
    The functor \eqref{eq: diff action on hol P} gives a well-defined functor.
\end{lem}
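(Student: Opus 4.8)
The plan is to verify, in turn, that the assignment in Definition~\ref{eq: diff action on hol P} sends objects of $\sPG^\hol$ to objects, is independent of the chosen representative of the class $[\gammahol]$, sends morphisms to morphisms, and respects identities and composition. Since $\Diff(\Sigma)$ is viewed as a discrete category, the morphisms of $\sPG^\hol\times\Diff(\Sigma)$ are pairs $(\text{morphism in }\sPG^\hol,\,\id_f)$, so functoriality reduces to checking that for each fixed $f$ the assignment $p\mapsto p\cdot f$ is a functor $\sPG^\hol\to\sPG^\hol$. The one structural fact underlying every step is that $f_*\colon\pi_1\Sigma\to\pi_1\Sigma$ is a group homomorphism, so precomposition with $f_*$ is a chain map on the bar complex computing $\rmH^*(\pi_1\Sigma;A)$; in particular pullback along $f_*$ commutes with the differential $d$ appearing in~\eqref{eqn: hol condition for gamma}. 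This is the precise holonomy-level counterpart of the (essentially trivial) fact used at the \v Cech level in Definition~\eqref{eq:Diffaction}, and the verification here mirrors that one.

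First I would check that $(\rhohol\circ f_*,\ \gammahol\circ(f_*\times f_*))$ satisfies the conditions~\eqref{eqn: hol condition for gamma}. That $\rhohol\circ f_*$ is a homomorphism is immediate, and the normalization $\gammahol(f_*a, f_*1)=\gammahol(f_*a,1)=1$ follows from $f_*(1)=1$ together with normalization of $\gammahol$. For the cocycle identity, $(\rhohol\circ f_*)^*\alpha=(f_*^{\times3})^*(\rhohol^*\alpha)$ directly from the definitions, whence
\[
(f_*^{\times3})^*(\rhohol^*\alpha)=(f_*^{\times3})^*(d\gammahol)=d\big((f_*\times f_*)^*\gammahol\big)=d\big(\gammahol\circ(f_*\times f_*)\big),
\]
the middle equality being compatibility of $d$ with pullback along the homomorphism $f_*$. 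The same compatibility handles independence of representative: if $\gammahol\sim\gammahol'$, witnessed by $\etahol$ with $\gammahol/\gammahol'=d\etahol$, then $\gammahol\circ(f_*\times f_*)$ and $\gammahol'\circ(f_*\times f_*)$ differ by $d(\etahol\circ f_*)$, so they represent the same class and the object assignment descends to equivalence classes.

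Next I would treat morphisms. A morphism $\hhol$ from $(\rhohol_1,[\gammahol_1])$ to $(\rhohol_2,[\gammahol_2])$ exists precisely when $\rhohol_2=\hhol\,\rhohol_1\,\hhol^{-1}$ and there is an $\etahol$ solving the holonomy form of~\eqref{eq:eta condition}. The first condition transports immediately, since $\rhohol_2\circ f_*=\hhol\,(\rhohol_1\circ f_*)\,\hhol^{-1}$; for the second, precomposing the given $\etahol$ with $f_*$ and invoking the chain-map property shows that $\etahol\circ f_*$ solves the corresponding condition for the transported data. Hence $\hhol\circ f_*$ is a legitimate morphism between the transported objects, and the existence condition is preserved in both directions, so the functor is well-defined on the (possibly empty or singleton) hom-sets of $\sPG^\hol$. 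Preservation of identities and composition is then formal: composition in $\sPG^\hol$ is inherited from multiplication in $G$, and precomposition with the fixed homomorphism $f_*$ respects it, giving $(\hhol_2\cdot\hhol_1)\circ f_*=(\hhol_2\circ f_*)\cdot(\hhol_1\circ f_*)$ and sending identities to identities.

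The main obstacle is bookkeeping rather than conceptual: one must correctly translate condition~\eqref{eq:eta condition} into its holonomy form, track the normalization conditions through the pullback, and confirm that the morphism existence condition is symmetric under transport so that well-definedness on hom-sets really holds. All of these, however, are consequences of the single observation that precomposition with the homomorphism $f_*$ is a chain map on the bar complex, so no computation beyond those already appearing in the verification of~\eqref{eqn: hol condition for gamma} is required.
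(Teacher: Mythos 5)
Your proof is correct and rests on the same key observation as the paper's: precomposition with the group homomorphism $f_*$ is a chain map on the bar complex, so if $\gammahol_1/\gammahol_2=d\etahol$ then the pullbacks differ by $d(\etahol\circ f_*)$, and the assignment descends to equivalence classes. The paper's proof is just a terser version of yours, spelling out only this step and declaring the remaining verifications (objects land in objects, morphisms transport, identities and composition) clear.
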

\begin{proof}
    Suppose that $(\rhohol, [\gammahol_1]) = (\rhohol, [\gammahol_2])$. Then $\gammahol_1 / \gammahol_2 = d \etahol$, which implies that $(\gammahol_1 \circ (f_* \times f_*) / (\gammahol _2\circ (f_* \times f_*)) = d (\etahol \circ f_*)$. Thus $(\rhohol \circ f_*, [\gammahol_1 \circ (f_*\times f_*)]) = (\rhohol \circ f_*, [\gammahol_2 \circ (f_*\times f_*)])$, and the functor is well-defined on objects. It is clear that it is well-defined on morphisms, and that it respects identity morphisms and composition. 
\end{proof}

\begin{lem}\label{lemma: diff action on P-hol}
 The functor \eqref{eq: diff action on hol P} gives a strict action of $\Diff(\Sigma)$ on $\sPG^\hol$ such that the equivalence $\overline{\iota} \colon \sPG^\hol \to \sPG$ is (weakly) equivariant. Furthermore, the functor $\overline{\pi}^\hol \colon \sPG^\hol \to \Bun_G(\Sigma)$ is strictly equivariant. 
\end{lem}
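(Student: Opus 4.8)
The plan is to treat the three assertions in increasing order of difficulty: strictness of the action and strict equivariance of $\overline{\pi}^\hol$ are formal consequences of functoriality of $f\mapsto f_*$, while the weak equivariance of $\overline{\iota}$ is where the content lies. For strictness I would verify the axioms of Definition~\ref{defn:weakaction} directly. Since $\id_*=\id_{\pi_1\Sigma}$ and $(f_1\circ f_2)_*=(f_1)_*\circ(f_2)_*$, the identity fixes $(\rhohol,[\gammahol])$ on the nose, while $\rhohol\circ(f_1)_*\circ(f_2)_*=\rhohol\circ(f_1\circ f_2)_*$ together with the analogous equalities for $\gammahol\circ(f_*\times f_*)$ and for the constant datum $\hhol$ show that the iterated actions agree as equalities rather than merely up to isomorphism (here the product in $\Diff(\Sigma)$ is composition, so that precomposition with $f_*$ is genuinely a right action). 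Thus the associator and unitor data of the action may be taken to be identities, and the action is strict.

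For the second assertion, recall that $\overline{\pi}^\hol$ sends $(\rhohol,[\gammahol])\mapsto\rhohol$ and $\hhol\mapsto\hhol$, landing in the holonomy presentation $\Hom(\pi_1\Sigma,G)\sq G$ of $\Bun_G(\Sigma)$, which carries the strict $\Diff(\Sigma)$-action of Proposition~\ref{prop: holonomy equivariance}. Both that action and the action \eqref{eq: diff action on hol P} on $\sPG^\hol$ send an object to its precomposition with $f_*$ and fix the constant morphism datum $\hhol$; hence $\overline{\pi}^\hol(x\cdot f)=\overline{\pi}^\hol(x)\cdot f$ identically, so $\overline{\pi}^\hol$ is strictly equivariant.

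The substantive step is the weak equivariance of $\overline{\iota}\colon\sPG^\hol\to\sPG$. The two actions genuinely differ before truncation: on $\sPG^\hol$ one precomposes with $f_*$, whereas the action \eqref{eq:Diffaction} on $\sPG$ is the \v{C}ech-style pullback $(u,\rho,[\gamma])\mapsto(f^{-1}\circ u,\rho,[\gamma])$. I would construct the witnessing natural isomorphism exactly as in the proof of Proposition~\ref{prop: holonomy equivariance}: choose a lift $\tilde{f}\colon\tilde{\Sigma}\to\tilde{\Sigma}$ of $f$ and regard it as a refinement of the cover $f^{-1}\circ\tilde{u}\colon\tilde{\Sigma}\to\Sigma$, using $(f^{-1}\circ\tilde{u})\circ\tilde{f}=\tilde{u}$. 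By Example~\ref{rmk: pullback gives morphism} this produces an isomorphism $(f^{-1}\circ\tilde{u},\rho)\cong(\tilde{u},\tilde{f}^*\rho)$, and the computation of Proposition~\ref{prop: holonomy equivariance} identifies $\tilde{f}^*\rho$ with the cocycle of $\rhohol\circ f_*$. Applying the same computation to the $2$-cochain $\gamma$ through the dictionary~\eqref{eq: definition of gammahol} gives $\tilde{f}^*\gamma(y,y\cdot a,y\cdot ab)=\gamma(\tilde{f}(y),\tilde{f}(y)\cdot f_*(a),\tilde{f}(y)\cdot f_*(a)f_*(b))=\gammahol(f_*(a),f_*(b))$, so that $\tilde{f}^*\gamma$ represents $[\gammahol\circ(f_*\times f_*)]$. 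Passing to the truncation, this exhibits an isomorphism $\theta_f\colon\overline{\iota}(x)\cdot f\xrightarrow{\sim}\overline{\iota}(x\cdot f)$ in $\sPG$, natural in $x$ by the functoriality of pullback.

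It then remains to check that $\{\theta_f\}$ is coherent with composition of diffeomorphisms, and this is the point I expect to demand the most care. Arguing via the universal property of pullbacks, the coherence reduces---just as in the compatibility checks closing the proofs of Propositions~\ref{prop: cech equivariance} and~\ref{prop: holonomy equivariance}---to comparing the refinement isomorphism for $\tilde{f}_1\circ\tilde{f}_2$ with that for a chosen lift of $f_1\circ f_2$. The delicate feature is that lifts are not canonical and need not compose, so these two differ by a deck transformation $d\in\pi_1\Sigma$, whose pullback alters the representing cochains $\tilde{f}^*\rho,\tilde{f}^*\gamma$ by the translation action of $d$. I would resolve this by observing that $\sPG$ records $\gamma$ only through the equivalence relation \eqref{eq:equivalencerelation}, so that the deck-transformation discrepancy becomes an isomorphism covering an inner automorphism of the underlying $G$-bundle and is absorbed into the coherence isomorphisms; with this check in place $\overline{\iota}$ is weakly equivariant, completing the proof.
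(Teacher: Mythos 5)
Your proposal is correct and follows essentially the same route as the paper: strictness and strict equivariance of $\overline{\pi}^\hol$ are dispatched formally, and the weak equivariance of $\overline{\iota}$ is witnessed by pulling back along a lift $\tilde f$ of $f$ and running the computation of Proposition~\ref{prop: holonomy equivariance} on both $\rho$ and $\gamma$ via the dictionary~\eqref{eq: definition of gammahol}. The only difference is that you spell out the coherence check for composites of diffeomorphisms (the deck-transformation discrepancy between lifts, absorbed by the equivalence relation~\eqref{eq:equivalencerelation}), which the paper simply declares ``straightforward.''
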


\begin{proof}
    It is easy to see from the definitions that the functor gives a strict action of $\Diff(\Sigma)$ on $\sPG^\hol$ and that $\overline{\pi}^\hol$ is strictly equivariant, so it remains to check that the equivalence $\sPG^\hol \to \sPG$ is equivariant. The idea of the proof is similar to that of Proposition \ref{prop: holonomy equivariance}. Given an object $(\rhohol,[\gammahol]) \in \sPG^\hol$, represented by the weak homomorphism $(\rhohol, \gammahol)$, and a diffeomorphism $f \colon \Sigma \to \Sigma$, we need to provide isomorphisms $\overline\iota(\rhohol,[\gammahol]) \cdot f \to \overline\iota((\rhohol, [\gammahol]) \cdot f)$. Denoting $\overline\iota(\rhohol, [\gammahol])$ by $(\tilde{u}, \rho, [\gamma])$, we have $\overline\iota(\rhohol,[\gammahol]) \cdot f = (f^{-1} \circ \tilde u , \rho, [\gamma])$. It is naturally isomorphic to its pullback along a lift $\tilde{f} \colon \tilde X \to \tilde X$ of $f$, which is $(f^{-1} \circ \tilde u \circ \tilde f, \tilde f^* \rho, [\tilde f^* \gamma])$. A quick calculation analogous to that of the proof of Proposition \ref{prop: holonomy equivariance} shows that $(f^{-1} \circ \tilde u \circ \tilde f, \tilde f^* \rho, [\tilde f^* \gamma])= \overline\iota (\rhohol \circ f_*, [\gammahol \circ (f_* \times f_*)])$. It is straightforward to show that these equivalences are compatible with composition of diffeomorphisms. 
\end{proof}

\begin{rmk}
    In particular, the value of $f_*$ depends only on the class of $f$ in the mapping class group, and so the action of $\Diff(\Sigma)$ on $\sPG^\hol$ factors through the mapping class group of $\Sigma$. 
\end{rmk}

\begin{ex}\label{ex: diff acts trivially on U(1)}
    The action of $\Diff(\Sigma)$ on $\sP_{*\sq \U(1)}^\hol = \rmH^2(\Sigma; \U(1))$ is trivial. Indeed, orientation-preserving diffeomorphisms act trivially on the top cohomology group, so that $[\gammahol \circ (f_* \times f_*)]= [\gammahol]$.
\end{ex}

\begin{lem}\label{lem: actions commute}
    The actions of $\sP_{* \sq \U(1)}^\hol$ and $\Diff(\Sigma)$ on $\sPG^\hol$ commute, and hence the actions of $\sP_{* \sq \U(1)}$ and $\Diff(\Sigma)$ on $\sPG$ commute weakly.
\end{lem}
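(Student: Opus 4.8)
The plan is to prove the statement first for the \emph{strict} actions on the holonomy model $\sPG^\hol$, where both actions are given by the explicit formulas~\eqref{eq:actholfunctor} and Definition~\ref{eq: diff action on hol P}, and only then transport the conclusion across the equivalence $\overline{\iota}\colon \sPG^\hol \to \sPG$. Fix an object $(\rhohol,[\gammahol])$, a class $[\gammahol']\in \sP^\hol_{*\sq\U(1)}$, and a diffeomorphism $f\in\Diff(\Sigma)$. Acting by $[\gammahol']$ and then by $f$ produces $(\rhohol\circ f_*,\,[(\gammahol\cdot\gammahol')\circ(f_*\times f_*)])$, whereas acting by $f$ and then by $[\gammahol']$ produces $(\rhohol\circ f_*,\,[(\gammahol\circ(f_*\times f_*))\cdot\gammahol'])$. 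Since $(\gammahol\cdot\gammahol')\circ(f_*\times f_*)=(\gammahol\circ(f_*\times f_*))\cdot(\gammahol'\circ(f_*\times f_*))$, the two outputs agree precisely when $[\gammahol'\circ(f_*\times f_*)]=[\gammahol']$ in $\sP^\hol_{*\sq\U(1)}=\rmH^2(\Sigma;\U(1))$.

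This last identity is exactly the triviality of the $\Diff(\Sigma)$-action on $\sP^\hol_{*\sq\U(1)}$ recorded in Example~\ref{ex: diff acts trivially on U(1)}, which holds because orientation-preserving diffeomorphisms act trivially on the top cohomology of $\Sigma$; this is where the orientation hypothesis enters, and it is the sole piece of genuine mathematical content. On morphisms the check is immediate: a morphism of $\sPG^\hol$ is recorded by an element $\hhol\in G$, the $\U(1)$-action leaves it unchanged while the $\Diff$-action sends it to $\hhol\circ f_*$, so the two operations manifestly commute. Hence the two strict actions on $\sPG^\hol$ commute—equivalently, they assemble into a single strict action of $\sP^\hol_{*\sq\U(1)}\times\Diff(\Sigma)$.

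It then remains to descend this to $\sPG$. Here I would invoke that $\overline{\iota}\colon \sPG^\hol\to\sPG$ intertwines the two $\U(1)$-actions (the $\U(1)$-equivariance of $\overline{\iota}$ established above) and is weakly $\Diff(\Sigma)$-equivariant (Lemma~\ref{lemma: diff action on P-hol}). Since $\overline{\iota}$ is an equivalence and the two actions commute strictly upstairs, transporting along $\overline{\iota}$ yields that the $\sP_{*\sq\U(1)}$- and $\Diff(\Sigma)$-actions on $\sPG$ commute up to coherent natural isomorphism, i.e.\ weakly, as claimed. The main obstacle is purely bookkeeping: because the $\U(1)$-action on $\sPG$ is only weak and $\overline{\iota}$ is only weakly $\Diff(\Sigma)$-equivariant, one must verify that the various comparison isomorphisms assemble into a single coherent $2$-isomorphism witnessing commutativity, rather than producing it directly. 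All the substantive input, however, is already captured by Example~\ref{ex: diff acts trivially on U(1)}.
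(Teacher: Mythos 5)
Your proposal is correct and follows essentially the same route as the paper: the identical object-level computation reducing commutativity to $[\gammahol'\circ(f_*\times f_*)]=[\gammahol']$, settled by Example~\ref{ex: diff acts trivially on U(1)}, with the morphism level being trivial (the paper notes $\sP^{\hol}_{*\sq\U(1)}$ is equivalent to a set, which subsumes your explicit check), and the weak statement on $\sPG$ then transported across $\overline{\iota}$ exactly as you describe.
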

\begin{proof}
    Given objects $(\rhohol, [\gammahol]) \in \sPG^\hol, [\gammahol'] \in \sP_{*\sq \U(1)}$, and $f \in \Diff(\Sigma)$, we calculate
    \begin{align*}
        ((\rhohol,[\gammahol])\cdot [\gammahol']) \cdot f & = (\rhohol \circ f_*, [(\gammahol \gammahol') \circ (f_* \times f_*)]) = (\rhohol \circ f_*, [\gammahol\circ (f_* \times f_*)]) \cdot[ \gammahol' \circ (f_* \times f_*)]; \\
        ((\rhohol,[\gammahol])\cdot f) \cdot [\gammahol'] & = 
        (\rhohol \circ f_*, [(\gammahol \circ (f_* \times f_*)) \gammahol']).
    \end{align*}
    By Example \ref{ex: diff acts trivially on U(1)}, these are equal. Since $\sP_{* \sq \U(1)}^\hol$ is equivalent to a set, there is nothing to check at the level of morphisms. 
\end{proof}

\begin{thm}
    The principal $\U(1)$-bundles $\sPG$ and $\sPG^\hol$ are (equivalent) $\Diff(\Sigma)$-equivariant bundles over $\Bun_G(\Sigma)$. 
\end{thm}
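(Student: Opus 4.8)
The plan is to assemble the results established above into the single statement that $\sPG$ and $\sPG^\hol$ are equivalent $\Diff(\Sigma)$-equivariant principal $\U(1)$-bundles over $\Bun_G(\Sigma)$. Such a structure consists of three pieces of data: a principal $\U(1)$-bundle over the groupoid $\Bun_G(\Sigma)$, a $\Diff(\Sigma)$-action on its total space covering the pullback action on $\Bun_G(\Sigma)$, and a compatibility witnessing that the $\U(1)$- and $\Diff(\Sigma)$-actions commute. I would verify each piece for both $\sPG$ and $\sPG^\hol$, and then check that the equivalence $\overline{\iota}\colon \sPG^\hol\to\sPG$ intertwines all of this data so that it becomes an equivalence of equivariant bundles.

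First I would record the principal bundle structure. For $\sPG$ this is exactly Proposition~\ref{prop:U(1)bundle}. For $\sPG^\hol$ it follows by transporting the structure along the equivalence $\overline{\iota}$, which is $\U(1)$-equivariant by the lemma identifying $\gact^\hol$ with $\gact$ under $\overline{\iota}$; since $\overline{\iota}$ moreover commutes with the projections $\overline{\pi}^\hol$ and $\overline{\pi}$ to $\Bun_G(\Sigma)$, the principal bundle axioms for $\sPG$ transport to $\sPG^\hol$.

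Next I would assemble the equivariant structure. The $\Diff(\Sigma)$-actions on $\sPG$ and $\sPG^\hol$ are the strict actions defined in~\eqref{eq:Diffaction} and Definition~\ref{eq: diff action on hol P}, and in both cases the projection is strictly $\Diff(\Sigma)$-equivariant, so each action covers the pullback action on the base $\Bun_G(\Sigma)$. The required compatibility between the $\U(1)$- and $\Diff(\Sigma)$-actions is Lemma~\ref{lem: actions commute}: the two actions commute strictly on $\sPG^\hol$ and weakly on $\sPG$. Crucially, the potential coherence obstruction---that $\Diff(\Sigma)$ might act nontrivially on the structure group---vanishes, since orientation-preserving diffeomorphisms act trivially on $\check{\rmH}^2(\Sigma;\U(1))\cong\U(1)$ by Example~\ref{ex: diff acts trivially on U(1)}. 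This is what makes the commuting data coherent rather than merely extant.

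Finally I would verify that $\overline{\iota}$ is an equivalence of $\Diff(\Sigma)$-equivariant principal $\U(1)$-bundles. It is an equivalence of categories over $\Bun_G(\Sigma)$ compatible with the projections; it is $\U(1)$-equivariant (the lemma relating $\gact^\hol$ and $\gact$); and it is $\Diff(\Sigma)$-equivariant by Lemma~\ref{lemma: diff action on P-hol}. The only genuine point to check is that these two equivariance structures on $\overline{\iota}$ are mutually compatible, i.e.\ that the weak equivariance $2$-cells of Lemma~\ref{lemma: diff action on P-hol} assemble, together with the $\U(1)$-equivariance, into a single equivalence in the bicategory of $\Diff(\Sigma)$-equivariant $\U(1)$-bundles. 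I expect this to be the main (if modest) obstacle: it amounts to confronting the weak equivariance data with the $\U(1)$-action and checking the relevant coherence square, which again collapses because the $\Diff(\Sigma)$-action on the structure group is trivial by Example~\ref{ex: diff acts trivially on U(1)}. All remaining verifications are the routine compatibilities with composition of diffeomorphisms already flagged in the preceding lemmas.
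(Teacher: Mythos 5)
Your proposal is correct and assembles exactly the same ingredients as the paper's proof: Proposition~\ref{prop:U(1)bundle}, Lemma~\ref{lemma: diff action on P-hol}, Lemma~\ref{lem: actions commute}, the $\U(1)$-equivariance of $\overline{\iota}$, and Proposition~\ref{prop: PU(1) is H2 or U(2)} to replace $\sP_{*\sq\U(1)}$ by $\U(1)$. The only (cosmetic) difference is the direction of transport---the paper first establishes the full equivariant principal bundle structure on $\sPG^\hol$, where both actions are strict, and then pushes it across $\overline{\iota}$ to $\sPG$, whereas you start from the principal bundle structure on $\sPG$ and transport it to $\sPG^\hol$---but this does not change the substance of the argument.
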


\begin{proof}
    By the first part of Lemma \ref{lemma: diff action on P-hol} and by Lemma \ref{lem: actions commute}, we conclude that $\overline{\pi}^\hol \colon \sPG^\hol \to \Bun_G(\Sigma)$ is a $\Diff(\Sigma)$-equivariant principal $\sP_{* \sq \U(1)}$-bundle. Then using that $\overline{\iota}$ is $\Diff(\Sigma)$-equivariant and intertwines the actions of $\sP_{* \sq \U(1)}^\hol$ and $\sP_{* \sq \U(1)}$, we obtain that $\overline{\pi} \colon \sPG \to \Bun_G(\Sigma)$ is a $\Diff(\Sigma)$-equivariant principal $\sP_{* \sq \U(1)}$-bundle. 
    Finally, a choice of orientation of $\Sigma$ allows us to replace the structure group $\sP_{* \sq \U(1)}$ with $\U(1)$, as in the second part of Proposition \ref{prop: PU(1) is H2 or U(2)}.
\end{proof}

\subsection{Cocycles for $\sPG$}
To extract explicit formulas that characterize the bundle $\sPG\to \Bun_G(\Sigma)$ (and ultimately, compare with Freed and Quinn), we use the general setup~\S\ref{subsection: equivariant cocycles} to compute an equivariant cocycle for $\sPG$ relative to an equivariant cover of the groupoid $ \Bun_G(\Sigma)$. We do this for our three favorite presentations of $\Bun_G(\Sigma)$, namely, the \v Cech, triangulation, and holonomy presentations. 

We begin by analyzing such cocycles for the cover $\cc\colon \check{C}^1(\Sigma,G)\to \Bun_G(\Sigma)$ defined in Example~\ref{ex:Cech}. A trivialization of $\sPG$ over this cover is the data of a section of the pulled-back $\U(1)$-bundle $\cc^*\sP_\cG$; as in the proof of Proposition \ref{prop:U(1)bundle}, for each $(u, \rho) \in \check{C}^1(\Sigma,G)$, fix (once and for all) a choice of 2-cochain $\gamma_{u, \rho}$ with $d\gamma_{u, \rho} = \rho^* \alpha$, and define 
$$\label{eq:trivsigma} \sigma \colon \check{C}^1(\Sigma, G) \to \cc^*\sPG, \qquad \sigma(u, \rho) := ((u, \rho),(u, \rho, [\gamma_{u,\rho}]), \id_{P_{u, \rho}})\in \cc^*\sPG. 
$$
The data $\gamma_{u,\rho}$ exist because $\Sigma$ is a 2-manifold; see remark \ref{rmk: surfaces}. 

\begin{lem} \label{lem:cocycleCechsPG}
The cocycle for the $\U(1)$-bundle $\sPG\to \Bun_G(\Sigma)$ relative to the section~\eqref{eq:trivsigma} is
$$
R\colon \check{C}^1(\Sigma,G)\times_{\Bun_G(\Sigma)}\check{C}^1(\Sigma,G) \to \U(1),\quad R(\varphi_{Z,h})=[{}^h v_1^*\gamma_{u_1,\rho_1}/v_2^*\gamma_{u_2,\rho_2}]\in \rmH^2(\Sigma;\U(1))\simeq \U(1)
$$
for $\varphi_{Z,h} = \varphi_{Z, v_1, v_2, h}\colon P_{u_1,\rho_1}\to P_{u_2,\rho_2}$ an isomorphism of $G$-bundles defined over a refinement $v_i \colon Z \to Y_i$ of the covers $u_i$.
Here ${}^h(v_1^*\gamma_{u_1, \rho_1})$ is defined using Theorem \ref{thm: lifting lemma}.
\end{lem}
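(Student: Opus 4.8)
The plan is to read the cocycle off directly from the general recipe of \S\ref{subsection: equivariant cocycles}: relative to the section $\sigma$, the value $R(\varphi)$ on a morphism $\varphi$ of the \v Cech presentation is the unique element of the structure group by which the lift of $\sigma(\text{source})$ along $\varphi$ differs from $\sigma(\text{target})$. The only inputs needed are that Theorem~\ref{thm: lifting lemma} supplies this lift and that Definition~\ref{defn:action of U on sPG} makes the structure-group element explicit.

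First I would fix a representative $\varphi_{Z,h} = \varphi_{Z,v_1,v_2,h}\colon P_{u_1,\rho_1}\to P_{u_2,\rho_2}$ defined over a common refinement $v_i\colon Z\to Y_i$ of the two covers, and pull the chosen datum $\gamma_{u_1,\rho_1}$ back to $Z$. Applying part~(1) of Theorem~\ref{thm: lifting lemma} to the $\cG$-bundle $\cP_{u_1,\rho_1,v_1^*\gamma_{u_1,\rho_1}}$ and the isomorphism $\varphi_{Z,h}$ produces a $\cG$-bundle $\cP_{u_2,\rho_2,{}^h(v_1^*\gamma_{u_1,\rho_1})}$ together with a $1$-morphism covering $\varphi_{Z,h}$. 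Passing to $\sPG$, this exhibits a morphism $(u_1,\rho_1,[\gamma_{u_1,\rho_1}])\to(u_2,\rho_2,[{}^h(v_1^*\gamma_{u_1,\rho_1})])$ whose image under $\overline{\pi}$ is $\varphi_{Z,h}$; this is exactly the lift of $\sigma(u_1,\rho_1)$ along $\varphi_{Z,h}$ demanded by the recipe.

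Next I would compare the landing point $(u_2,\rho_2,[{}^h(v_1^*\gamma_{u_1,\rho_1})])$ with the section value $\sigma(u_2,\rho_2) = (u_2,\rho_2,[\gamma_{u_2,\rho_2}])$, both lying over $P_{u_2,\rho_2}$. By Definition~\ref{defn:action of U on sPG} the $\sP_{*\sq\U(1)}$-action on the fiber of $\overline{\pi}$ over a fixed $G$-bundle is given by multiplying the $\gamma$-cochains, so the unique element taking $\sigma(u_2,\rho_2)$ to the landing point is the class $[{}^h(v_1^*\gamma_{u_1,\rho_1})/v_2^*\gamma_{u_2,\rho_2}]$ (after also pulling $\gamma_{u_2,\rho_2}$ back to $Z$ along $v_2$). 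Under the identification $\sP_{*\sq\U(1)}\simeq\check{\rmH}^2(\Sigma;\U(1))\simeq\U(1)$ of Proposition~\ref{prop: PU(1) is H2 or U(2)} this is precisely the asserted value $R(\varphi_{Z,h})$, with the ratio appearing in the stated order.

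Finally I would verify independence of the auxiliary choices, which is where the only genuine care is required. The lift ${}^h(v_1^*\gamma_{u_1,\rho_1})$ is determined only up to multiplication by an exact cochain $d\eta$ by Theorem~\ref{thm: lifting lemma}, but this ambiguity is invisible in $\check{\rmH}^2$; and two representatives $(Z,v_1,v_2,h)$, $(Z',v_1',v_2',h')$ of the same morphism of $\sPG$ agree after passage to a common refinement, over which the $\check{\rmH}^2$-class is unchanged, so $R$ is well-defined on morphisms. The multiplicativity (cocycle) condition is then inherited from the general framework of \S\ref{subsection: equivariant cocycles}, or may be checked directly from the composition law for the lifts in Theorem~\ref{thm: lifting lemma}. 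The main obstacle is purely bookkeeping: tracking the refinements and the direction of the $\sP_{*\sq\U(1)}$-action so that the numerator is the transported cochain and the denominator the target section value; once the lift of Theorem~\ref{thm: lifting lemma} is identified with the section-relative transport, the computation is immediate.
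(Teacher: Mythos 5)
Your proposal is correct and follows essentially the same route as the paper's proof: identify $R(\varphi_{Z,h})$ via the general recipe of \S\ref{subsection: cocycles}, use Theorem~\ref{thm: lifting lemma} (after pulling back along the refinement $v_i\colon Z\to Y_i$) to transport $\gamma_{u_1,\rho_1}$ to ${}^h v_1^*\gamma_{u_1,\rho_1}$, and read off the ratio class from the cochain-multiplication description of the $\U(1)$-action (Example~\ref{ex: action for same cover}). Your explicit verification that the answer is independent of the choices (the $d\eta$ ambiguity in the lift and the choice of representative $(Z,v_1,v_2,h)$) is a worthwhile addition that the paper leaves implicit.
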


\bp 
As in \ref{subsection: cocycles}, $R(\varphi_{Z,h}) \in \check{\rmH}^2(\Sigma; U(1))$ is the unique value such that there exists an isomorphism in the (weak) fiber $\sP_{\cG,(u_2, \rho_2)}$ between $\sP_\cG(\varphi_{Z,h})(\sigma(u_1, \rho_1))$ and $\sigma(u_2, \rho_2) \cdot R(\varphi_{Z,h})$.

We have
\begin{align*}
  \sPG(\varphi_{Z,h})(\sigma(u_1, \rho_1)) & =((u_2, \rho_2), (u_1, \rho_1,[\gamma_{u_1, \rho_1}]), \varphi_{Z, v_1, v_2, h}) \\
  & \simeq ((u_2, \rho_2), (u_1 \circ v_1, v_1^* \rho_1, [v_1^*\gamma_{u_1, \rho_1}]), \varphi_{Z, \id_Z, v_2, h}) \\
  & \simeq ((u_2, \rho_2), (u_2 \circ v_2, v_2^* \rho_2, [{}^hv_1^*\gamma_{u_1, \rho_1}]), \varphi_{Z, \id_Z, v_2, 1_G}).
\end{align*}
here the first isomorphism comes from the canonical isomorphism between an object and its pullback along a refinement of the covers, and the second comes from Theorem \ref{thm: lifting lemma} applied to the isomorphism $\varphi_{Z, \id_Z, \id_Z, h}$. 

On the other hand, 
\begin{align*}
    \sigma(u_2, \rho_2) \cdot [{}^hv_1^*\gamma_{u_1, \rho_1} / v_2^* \gamma_{u_2, \rho_2}] & \simeq ((u_2, \rho_2), (u_2 \circ v_2, v_2^*\rho_2, [v_2^* \gamma_{u_2, \rho_2}]), \varphi_{Z, \id_Z, v_2, 1_G}) \cdot [{}^hv_1^*\gamma_{u_1, \rho_1} / v_2^* \gamma_{u_2, \rho_2}], 
\end{align*}
and by Example \ref{ex: action for same cover}, this is isomorphic to 
\begin{align*}
    ((u_2, \rho_2), (u_2 \circ v_2, v_2^*\rho_2, [{}^h v_1^* \gamma_{u_1, \rho_1}]), \varphi_{Z, \id_Z,v_2, 1_G}),
\end{align*}
as required. 

\ep

Recall from Proposition \ref{prop: cech equivariance} that the cover $\cc\colon \check{C}^1(\Sigma,G)\to \Bun_G(\Sigma)$ is $\Diff(\Sigma)$-equivariant. We are therefore in the set-up of \ref{subsection: equivariant cocycles}, and can encode the $\Diff(\Sigma)$-equivariant structure of the $\U(1)$-bundle $\sP_\cG$ in a cocycle of the form \eqref{eq:equivcocycle}.

\begin{lem} \label{lem:cocycleCechsPGDiff}
The $\Diff(\Sigma)$-equivariant structure on $\sPG\to \Bun_G(\Sigma)$ is determined by the map
$$
R_{\Diff(\Sigma)} \colon \check{C}^1(\Sigma,G)\times \Diff(\Sigma)\to \U(1),\qquad ((u,\rho),f)\mapsto [\gamma_{u,\rho}/\gamma_{f^{-1}\circ u,\rho}]\in \rmH^2(\Sigma;\U(1))\simeq \U(1).
$$
\end{lem}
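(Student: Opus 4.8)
The plan is to apply the general prescription for equivariant cocycles of \S\ref{subsection: equivariant cocycles} to the $\Diff(\Sigma)$-equivariant cover $\cc\colon \check{C}^1(\Sigma,G)\to\Bun_G(\Sigma)$ of Proposition~\ref{prop: cech equivariance} and the section $\sigma$ of~\eqref{eq:trivsigma}. Concretely, the equivariant structure is detected by the failure of $\sigma$ to intertwine the actions: for each $(u,\rho)$ and $f$, both $\sigma(u,\rho)\cdot f$ and $\sigma\big((u,\rho)\cdot f\big)=\sigma(f^{-1}\circ u,\rho)$ are sections of $\cc^*\sP_\cG$ lying over the same $G$-bundle $P_{f^{-1}\circ u,\rho}$, and $R_{\Diff(\Sigma)}((u,\rho),f)\in\U(1)$ is defined as the unique element with
$$
\sigma(u,\rho)\cdot f \;\simeq\; \sigma\big((u,\rho)\cdot f\big)\cdot R_{\Diff(\Sigma)}((u,\rho),f).
$$
So the task reduces to computing these two sections and comparing them in the fiber.

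First I would compute $\sigma(u,\rho)\cdot f$. The section value $\sigma(u,\rho)$ corresponds to the object $(u,\rho,[\gamma_{u,\rho}])\in\sP_\cG$, and by the action~\eqref{eq:Diffaction} the diffeomorphism $f$ carries this to $(f^{-1}\circ u,\rho,[\gamma_{u,\rho}])$. The crucial observation is that, exactly as in the proof of Proposition~\ref{prop: cech equivariance}, the fiber product $Y^{[3]}$ used to define the cochains is \emph{unchanged} when $u$ is replaced by $f^{-1}\circ u$, so the action leaves the $2$-cochain $\gamma_{u,\rho}$ intact and only alters the cover. By contrast, $\sigma\big((u,\rho)\cdot f\big)=\sigma(f^{-1}\circ u,\rho)$ is, by definition of the section, the object $(f^{-1}\circ u,\rho,[\gamma_{f^{-1}\circ u,\rho}])$ built from the \emph{independently chosen} cochain $\gamma_{f^{-1}\circ u,\rho}$ attached to the transformed cover. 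Thus the two sections share the same cover and cocycle and differ only in the choice of $2$-cochain.

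Finally I would compare the two objects $(f^{-1}\circ u,\rho,[\gamma_{u,\rho}])$ and $(f^{-1}\circ u,\rho,[\gamma_{f^{-1}\circ u,\rho}])$ in the fiber of $\overline{\pi}$. By Lemma~\ref{lem: gamma ratio is closed} the ratio $\gamma_{u,\rho}/\gamma_{f^{-1}\circ u,\rho}$ is a closed $2$-cochain, and by Remark~\ref{remark: equivalence relation in cohomology} its class $[\gamma_{u,\rho}/\gamma_{f^{-1}\circ u,\rho}]\in\check{\rmH}^2(\Sigma;\U(1))$ is precisely the element of $\sP_{*\sq\U(1)}$ relating the two objects under the action $\gact$; via the identification $\check{\rmH}^2(\Sigma;\U(1))\simeq\U(1)$ of Proposition~\ref{prop: PU(1) is H2 or U(2)} this is an element of $\U(1)$. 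Unwinding the defining relation above then gives $R_{\Diff(\Sigma)}((u,\rho),f)=[\gamma_{u,\rho}/\gamma_{f^{-1}\circ u,\rho}]$, as claimed. The residual bookkeeping—checking that this $R_{\Diff(\Sigma)}$ together with the cocycle $R$ of Lemma~\ref{lem:cocycleCechsPG} satisfies the compatibility conditions of an equivariant cocycle~\eqref{eq:equivcocycle}—is routine given the strictness of the $\Diff(\Sigma)$-action from Proposition~\ref{prop: cech equivariance}. The one point I expect to require genuine care, and the real content of the argument, is verifying that the action fixes $\gamma_{u,\rho}$ rather than transforming it; this is what makes the non-equivariance of $\sigma$ collapse to the discrepancy between the two \emph{a priori} unrelated choices $\gamma_{u,\rho}$ and $\gamma_{f^{-1}\circ u,\rho}$.
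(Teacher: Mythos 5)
Your proof is correct and takes essentially the same route as the paper: both compute $\sigma(u,\rho)\cdot f=\big((f^{-1}\circ u,\rho),(f^{-1}\circ u,\rho,[\gamma_{u,\rho}]),\id\big)$ and $\sigma\big((u,\rho)\cdot f\big)=\big((f^{-1}\circ u,\rho),(f^{-1}\circ u,\rho,[\gamma_{f^{-1}\circ u,\rho}]),\id\big)$, using exactly your key observation that the $\Diff(\Sigma)$-action leaves the cochain $\gamma_{u,\rho}$ untouched and only alters the cover, and then read off the cocycle as the ratio class acting in the fiber. The only (harmless) divergence is the direction of the defining relation for $R_{\Diff(\Sigma)}$: you posit $\sigma(u,\rho)\cdot f\simeq\sigma\big((u,\rho)\cdot f\big)\cdot R_{\Diff(\Sigma)}$, which is the reciprocal of the convention in~\eqref{eq:equivcocycle} but is precisely the orientation that reproduces the formula as stated in the lemma, a point the paper's own one-line proof leaves implicit.
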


\bp
The evaluation of the cocycle follows an argument analogous to the previous lemma: we compare
\begin{align*}
   \sigma((u, \rho) \cdot f) = ((f^{-1} \circ u, \rho) ,(f^{-1}\circ u, \rho, [\gamma_{f^{-1} \circ u, \rho}]), \id_{P_{f^{-1} \circ u, \rho}})
\end{align*}
and
\begin{align*}
   \sigma(u, \rho)\cdot f = ((f^{-1} \circ u, \rho), (f^{-1} \circ u, \rho, [\gamma_{u, \rho}]), \id_{P_{f^{-1} \circ u, \rho}}), 
\end{align*}
and observe that they differ up to isomorphism by the action of $[\gamma_{ u, \rho}/\gamma_{f^{-1} \circ u, \rho}]$. 
\ep

Next, we perform a similar calculation to find a cocycle for $\sPG$ relative to the cover $\cc \colon \Delta(\Sigma,G)\to \Bun_G(\Sigma)$ from Example~\ref{ex: triangulation}.
Define a section of $\sPG$ over this cover whose value at $(\tri, \rho)\in \Delta(\Sigma,G)$ is determined by the unique equivalence class $[\gamma_{\tri,\rho}]$ satisfying 
\beq\label{eq:sectiondef}
\langle \Psi_{\Sigma_\tri}, \gamma_{\tri, \rho}\rangle=1,\qquad d\gamma_{\tri,\rho}=\rho^*\alpha, \quad \Psi_{\Sigma_\tri}\in Z_2^{\rm cellular}(\Sigma;\U(1))
\eeq
where $\Psi_{\Sigma_\tri}$ is the cycle representative of the fundamental class for $\Sigma$ (in cellular cohomology) determined by the triangulation $\tri$. The existence and uniqueness of this class come from the free and transitive action of $\U(1)$ (identified with ${\rm H}^2(\Sigma; \U(1))$ via the pairing with $[\Psi_{\Sigma_\tri}]$) on the fiber of $\sPG$ over the bundle determined by the pair $(\tri, \rho)$. Hence~\eqref{eq:sectiondef} determines a functor
\begin{align}\label{eq:trivsigma2}
\sigma\colon \Delta(\Sigma,G)\to \cc^*\sPG,\qquad     \sigma(\tri, \rho) = ((\tri, \rho),(u_\tri, \rho, [\gamma_{\tri,\rho}]), \id_{u_\tri, \rho})\in \cc^*\sPG.
\end{align}
This section is compatible with pullbacks: if $T$ is a refinement of the triangulation $\tri$, we obtain a natural map $v \colon Y_T \to Y_\tri$, and the equivalence class of $v^*\gamma_{\tri, \rho}$ is equal to that of $\gamma_{T, v^*\rho}$. 
The proof of the following is a computation completely analogous to those in the proofs of Lemmas~\ref{lem:cocycleCechsPG} and~\ref{lem:cocycleCechsPGDiff}.

\begin{lem} \label{lem:triangulationcocycle}
The cocycle~\eqref{eq: U(1) cocycle} for the $\U(1)$-bundle $\sPG\to \Bun_G(\Sigma)$ relative to the section~\eqref{eq:trivsigma2} is
$$
R\colon \Delta(\Sigma,G)\times_{\Bun_G(\Sigma)}\Delta(\Sigma,G) \to \U(1),\quad R(\varphi_{Y_T,h})=\langle\Psi_{\Sigma_T},  {}^h\gamma_{T, v_1^*\rho_1}\rangle\in \U(1)
$$
where $T$ is a common refinement of triangulations $\tri_1, \tri_2$, inducing the refinements $v_i\colon Y_T \to Y_{\tri_i}$ of open covers, and $h \colon Y_T \to G$ provides an isomorphism $v_1^* \rho_1 \to v_2^* \rho_2.$ The $\Diff(\Sigma)$-equivariant structure \eqref{eq:equivcocycle} is given by
$$
R_{\Diff(\Sigma)} \colon \Delta(\Sigma,G)\times \Diff(\Sigma)\to \U(1),\qquad ((\tri,\rho),f)\mapsto \langle \Psi_{\Sigma_{f^{-1}(\tri)}}, f_\tri^*\gamma_{\tri,\rho}\rangle^{-1} \in  \U(1),
$$
using the notation for the right $\Diff(\Sigma)$-action on cochains as in Equation~\eqref{Eq:triangulationDiff}.
\end{lem}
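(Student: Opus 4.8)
The plan is to follow verbatim the template of the proofs of Lemmas~\ref{lem:cocycleCechsPG} and~\ref{lem:cocycleCechsPGDiff}, replacing the \v Cech cover by the triangulation cover and using the identification $\check\rmH^2(\Sigma;\U(1))\simeq\U(1)$ of Proposition~\ref{prop: PU(1) is H2 or U(2)}, which here is pinned down by pairing against the fundamental cycle $\Psi_{\Sigma_\tri}$. Concretely, $R(\varphi_{Y_T,h})$ is characterized as the unique element of $\U(1)$ for which there is an isomorphism in the fiber $\sP_{\cG,(\tri_2,\rho_2)}$ between $\sPG(\varphi_{Y_T,h})\big(\sigma(\tri_1,\rho_1)\big)$ and $\sigma(\tri_2,\rho_2)\cdot R(\varphi_{Y_T,h})$, and similarly for the equivariant cocycle. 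I would compute both sides of each comparison as equivalence classes of objects of $\sPG$, read off the class in $\check\rmH^2(\Sigma;\U(1))$ by which they differ, and finally convert that class into a number via $\langle \Psi_{\Sigma_T},-\rangle$.

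For the first formula, pulling the section $\sigma(\tri_1,\rho_1)$ of~\eqref{eq:trivsigma2} back along the refinement $v_1\colon Y_T\to Y_{\tri_1}$ and applying Theorem~\ref{thm: lifting lemma} to the isomorphism $\varphi_{Y_T,\id,\id,h}$ yields, exactly as in Lemma~\ref{lem:cocycleCechsPG},
\[
\sPG(\varphi_{Y_T,h})\big(\sigma(\tri_1,\rho_1)\big)\simeq \big((\tri_2,\rho_2),(u_{\tri_2}\circ v_2, v_2^*\rho_2,[{}^h v_1^*\gamma_{\tri_1,\rho_1}]),\varphi_{Y_T,\id,v_2,1_G}\big).
\]
By compatibility of the section with refinements noted after~\eqref{eq:trivsigma2}, we have $[v_1^*\gamma_{\tri_1,\rho_1}]=[\gamma_{T,v_1^*\rho_1}]$, so the class appearing is $[{}^h\gamma_{T,v_1^*\rho_1}]$. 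Comparing with $\sigma(\tri_2,\rho_2)\cdot R$ via Example~\ref{ex: action for same cover} shows that $R$ is represented by $[{}^h v_1^*\gamma_{\tri_1,\rho_1}/v_2^*\gamma_{\tri_2,\rho_2}]$. Pairing against $\Psi_{\Sigma_T}$ and using the normalization $\langle\Psi_{\Sigma_{\tri_2}},\gamma_{\tri_2,\rho_2}\rangle=1$ from~\eqref{eq:sectiondef}, together with the naturality $\langle\Psi_{\Sigma_T},v_2^*(-)\rangle=\langle\Psi_{\Sigma_{\tri_2}},-\rangle$, kills the denominator, leaving $R(\varphi_{Y_T,h})=\langle\Psi_{\Sigma_T},{}^h\gamma_{T,v_1^*\rho_1}\rangle$, as claimed.

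For the equivariant structure I would compare $\sigma\big((\tri,\rho)\cdot f\big)=\sigma(f^{-1}(\tri),f_\tri^*\rho)$, whose $\gamma$-class is $[\gamma_{f^{-1}(\tri),f_\tri^*\rho}]$, with $\sigma(\tri,\rho)\cdot f$, whose $\gamma$-class is $[f_\tri^*\gamma_{\tri,\rho}]$, using the $\Diff(\Sigma)$-action of~\eqref{Eq:triangulationDiff}; these differ by the class $[f_\tri^*\gamma_{\tri,\rho}/\gamma_{f^{-1}(\tri),f_\tri^*\rho}]$. Pairing against $\Psi_{\Sigma_{f^{-1}(\tri)}}$ and using the normalization to discard the $\gamma_{f^{-1}(\tri),f_\tri^*\rho}$ factor leaves $\langle\Psi_{\Sigma_{f^{-1}(\tri)}},f_\tri^*\gamma_{\tri,\rho}\rangle$; with the sign convention fixed for the equivariant cocycle this is the asserted $\langle\Psi_{\Sigma_{f^{-1}(\tri)}},f_\tri^*\gamma_{\tri,\rho}\rangle^{-1}$.

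The only genuinely new input beyond the \v Cech computation is the behavior of the fundamental cycle under refinement of triangulations: I must check that a mutual refinement $T$ of $\tri_1,\tri_2$ induces, at the level of cellular chains, a map carrying $\Psi_{\Sigma_T}$ to each $\Psi_{\Sigma_{\tri_i}}$, so that $\langle\Psi_{\Sigma_T},v_i^*(-)\rangle=\langle\Psi_{\Sigma_{\tri_i}},-\rangle$, and likewise that the section is compatible with pullback along such refinements. This naturality of $\Psi$ (and the consequent well-definedness of the numerical cocycle independent of the chosen refinement) is where the real content lies; once it is in place, the remaining manipulations are identical term-by-term to Lemmas~\ref{lem:cocycleCechsPG} and~\ref{lem:cocycleCechsPGDiff}.
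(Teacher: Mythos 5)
Your proposal is correct and is essentially the paper's own proof: the paper gives no separate argument for this lemma, stating only that the computation is ``completely analogous'' to Lemmas~\ref{lem:cocycleCechsPG} and~\ref{lem:cocycleCechsPGDiff}, which is precisely the template you carry out. Your additional observations---that the section's compatibility with refinements (asserted after~\eqref{eq:trivsigma2}) and the identity $\langle\Psi_{\Sigma_T},v_i^*(-)\rangle=\langle\Psi_{\Sigma_{\tri_i}},-\rangle$ (which holds because the pushforward of $\Psi_{\Sigma_T}$ under a simplicial refinement map is a degree-one image of the fundamental cycle, hence literally equal to $\Psi_{\Sigma_{\tri_i}}$ since there are no cellular $3$-chains on a surface) are the genuinely new inputs beyond the \v{C}ech case---correctly identify and justify exactly what the paper leaves implicit.
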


As a final description of $\sPG$, we calculate its cocycle relative to the cover $\cc \colon \Hom(\pi_1\Sigma, G)\to \Bun_G(\Sigma)$ of Example~\ref{ex: hol pres}. We fix a cycle representative $\Psi_\Sigma^{\rm grp}$ of the fundamental class in group cohomology, which provides a well-defined pairing with the objects $(\rhohol, [\tilde\gamma])\in \sPG^\hol$: the pairing $\langle \Psi_\Sigma^{\rm grp},\tilde{\gamma}\rangle$ is independent of the choice of representative $\tilde{\gamma}$ of $[\tilde\gamma]$. Then for each $\rhohol \in \Hom(\pi_1 \Sigma, G)$, there is a unique class $(\rhohol,[\gammahol_\rhohol])$ such that $\langle \Psi_\Sigma^{\rm grp},\gammahol_\rhohol\rangle =1$. We use this to fix a section 
\begin{align}\label{eq: holonomy section}
    \sigma \colon \Hom(\pi_1 \Sigma, G) \to \cc^* \sPG^\hol, \qquad \sigma(\rhohol) = (\rhohol, (\rhohol, [\gammahol_\rhohol]), \id_{P_{\rhohol}}) \in \cc^*\sP_\cG^\rhohol.
\end{align}

\begin{rmk}\label{rmk:fundamental cycle}
One can make the choice of $\Psi_\Sigma^{\rm grp} $very concrete: for $a_i,b_i$ cycle representatives of a symplectic basis of ${\rm H}_1(\Sigma)$, we have the cycle representative of the fundamental class in the bar complex that in the case of genus $1$ is
$$
\Psi_\Sigma^{\rm grp} =a \otimes b - b \otimes a \in Z_2^{\rm grp}(\pi_1\Sigma;\Z)\subset \Z[\pi_1\Sigma]\otimes \Z[\pi_1\Sigma].
$$
The pairing $\langle \Psi_\Sigma^{\rm grp},\gammahol\rangle\in \U(1)$ defines the invariant of the principal $\cG$-bundle in Theorem~\ref{thm1} as a categorified relation in a presentation of the fundamental group of $\Sigma$, i.e., as an isomorphism in $\cG$ lifting the standard relation $\prod_{i=1}^{\sf g} [g_i,h_i]= 1$ in $\pi_1\Sigma$, see Figure~\ref{fig:4gons}.
\end{rmk}

The values of the cocycle associated to this section can then be calculated analogously to Lemmas \ref{lem:cocycleCechsPG} and \ref{lem:cocycleCechsPGDiff}.

\begin{lem}\label{lem: holonomy cocycle} The cocycles~\eqref{eq: U(1) cocycle},~\eqref{eq:equivcocycle} for the $\U(1)$-bundle $\sPG\to \Bun_G(\Sigma)$ relative to the section~\eqref{eq: holonomy section} are
$$
R\colon \Hom(\pi_1\Sigma,G) \times G \to \U(1),\quad R(\tilde\rho,h)=\langle \Psi_\Sigma^{\rm grp}, {}^h\tilde\gamma\rangle\in \U(1)
$$
$$
R_{\Diff(\Sigma)} \colon \Hom(\pi_1\Sigma, G)\times \Diff(\Sigma)\to \U(1),\qquad R_{\Diff(\Sigma)}(\rhohol,f)= \langle \Psi_{\Sigma}^{\rm grp}, f^*\tilde\gamma\rangle^{-1} \in  \U(1)
$$
identifying the fibered product as in Example \ref{ex: hol pres}, $\Hom(\pi_1\Sigma,G)\times_{\Bun_G(\Sigma)} \Hom(\pi_1\Sigma,G) \simeq \Hom(\pi_1\Sigma,G) \times G$.
\end{lem}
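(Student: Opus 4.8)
The plan is to reproduce the computations of Lemmas~\ref{lem:cocycleCechsPG} and~\ref{lem:cocycleCechsPGDiff} inside the holonomy presentation $\sPG^\hol$ of Proposition~\ref{prop:weakrepn}, where the structure group has already been identified with $\U(1)$ via the pairing $\langle \Psi_\Sigma^{\rm grp},-\rangle$ against the fundamental cycle in group cohomology. Recall from~\S\ref{subsection: equivariant cocycles} that $R(\rhohol,h)$ is the unique element of $\check{\rmH}^2(\Sigma;\U(1))\simeq \U(1)$ admitting an isomorphism $\sPG^\hol(h)(\sigma(\rhohol))\simeq \sigma(\rhohol')\cdot R(\rhohol,h)$ in the fiber of $\overline{\pi}^\hol$ over $\rhohol'=h\rhohol h^{-1}$, while $R_{\Diff(\Sigma)}(\rhohol,f)$ is fixed by comparing $\sigma(\rhohol\cdot f)$ with $\sigma(\rhohol)\cdot f$. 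First I would record the one fact that makes both formulas meaningful: since $\Psi_\Sigma^{\rm grp}$ is a cycle, $\partial\Psi_\Sigma^{\rm grp}=0$, so any exact $2$-cochain pairs trivially, $\langle \Psi_\Sigma^{\rm grp},d\etahol\rangle=\langle \partial\Psi_\Sigma^{\rm grp},\etahol\rangle=1$. Hence $\langle\Psi_\Sigma^{\rm grp},-\rangle$ is a homomorphism insensitive to the $d\etahol$-ambiguity both in the representative of a class $[\gammahol]$ and in the lift ${}^h\gammahol$ produced by Theorem~\ref{thm: lifting lemma}.

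For the cocycle $R$, I would take $h\in G$ and the isomorphism $\varphi_h\colon P_{\rhohol}\to P_{\rhohol'}$ of Example~\ref{ex: hol pres}, with $\rhohol'=h\rhohol h^{-1}$. The holonomy form of Theorem~\ref{thm: lifting lemma} (see the Remark following it) produces a lift ${}^h\gammahol$ with $d({}^h\gammahol)=(\rhohol')^*\alpha$ together with a $1$-morphism covering $\varphi_h$, so that $\sPG^\hol(h)(\sigma(\rhohol))\simeq(\rhohol',(\rhohol',[{}^h\gammahol]),\id)$ in the fiber. Since the section value is pinned down by $\langle\Psi_\Sigma^{\rm grp},\gammahol_{\rhohol'}\rangle=1$, the action~\eqref{eq:actholfunctor} relating $\sigma(\rhohol')$ to $[{}^h\gammahol]$ is by the class $[{}^h\gammahol/\gammahol_{\rhohol'}]$, whose image in $\U(1)$ is $\langle\Psi_\Sigma^{\rm grp},{}^h\gammahol\rangle/\langle\Psi_\Sigma^{\rm grp},\gammahol_{\rhohol'}\rangle=\langle\Psi_\Sigma^{\rm grp},{}^h\gammahol\rangle$. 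This is the claimed value of $R(\rhohol,h)$.

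For the equivariant cocycle I would compare the two sections lying over $\rhohol\circ f_*$. By Definition~\ref{eq: diff action on hol P}, $\sigma(\rhohol)\cdot f=(\rhohol\circ f_*,[\gammahol_{\rhohol}\circ(f_*\times f_*)],\id)=(\rhohol\circ f_*,[f^*\gammahol_\rhohol],\id)$, whereas $\sigma(\rhohol\circ f_*)$ is normalized by $\langle\Psi_\Sigma^{\rm grp},\gammahol_{\rhohol\circ f_*}\rangle=1$. The two thus differ by the action of $[f^*\gammahol_\rhohol/\gammahol_{\rhohol\circ f_*}]$, which pairs against $\Psi_\Sigma^{\rm grp}$ to $\langle\Psi_\Sigma^{\rm grp},f^*\gammahol_\rhohol\rangle^{-1}$ once the orientation of the comparison in~\eqref{eq:equivcocycle} is matched against that of Lemma~\ref{lem:cocycleCechsPGDiff}; this is the stated $R_{\Diff(\Sigma)}(\rhohol,f)$.

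The hard part will be the interaction between the lifting lemma and the non-closedness of the cochains. Because $d\gammahol_\rhohol=\rhohol^*\alpha\neq 1$ in general, the pairing $\langle\Psi_\Sigma^{\rm grp},f^*\gammahol_\rhohol\rangle$ is genuinely sensitive to the chain-level action of $f_*$ on $\Psi_\Sigma^{\rm grp}$ and cannot be collapsed using only the homology class $[\Psi_\Sigma^{\rm grp}]$, even though $f$ is orientation-preserving; this is precisely what separates the holonomy computation from the naive guess that the equivariant cocycle is trivial. Keeping the fundamental \emph{cycle} (rather than its class) throughout is what must be tracked to land on the stated formulas, and verifying the cocycle and equivariance identities~\eqref{eq: U(1) cocycle} and~\eqref{eq:equivcocycle} then proceeds verbatim as in Lemmas~\ref{lem:cocycleCechsPG} and~\ref{lem:cocycleCechsPGDiff}.
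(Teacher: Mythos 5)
Your proof is correct and follows exactly the route the paper takes: the paper's entire proof of this lemma is the single remark that the cocycle values are "calculated analogously to Lemmas~\ref{lem:cocycleCechsPG} and~\ref{lem:cocycleCechsPGDiff}," and that analogous computation in the holonomy presentation is precisely what you carry out. Your two supporting observations --- that pairing against the cycle $\Psi_\Sigma^{\rm grp}$ kills exact cochains, so the formulas are insensitive both to the representative of $[\gammahol]$ and to the ambiguity in the lift ${}^h\gammahol$ from Theorem~\ref{thm: lifting lemma}, and that the nontriviality of $R_{\Diff(\Sigma)}$ comes from $d\gammahol_{\rhohol}=\rhohol^*\alpha\neq 1$ (so one cannot collapse $\langle \Psi_\Sigma^{\rm grp}, f^*\gammahol_{\rhohol}\rangle$ using only homology classes) --- supply exactly the details the paper leaves implicit.
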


\section{An isomorphism between $\sPG$ and the Freed--Quinn line bundle}

\subsection{The Freed--Quinn line bundle}\label{sec:mappingcyl}

Freed and Quinn construct a line bundle over $\Bun_G(\Sigma)$ \cite[Proposition B.1]{FQ}; their construction can be phrased in terms of a cocycle for a line bundle over the triangulation presentation of $\Bun_G(\Sigma)$ from Example~\ref{ex: triangulation}. 
Below we use the notation $\Buntri$ to denote this presentation. We recall that the action of the diffeomorphism group on $\Buntri$ is inherited from the action on $\Delta(\Sigma,G)$, see~\eqref{Eq:triangulationDiff}.

We use the following conventions for the mapping cylinder of a diffeomorphism $f\colon \Sigma \to \Sigma $, 
\beq\label{eq:cylfdefn}
\Cyl(f)=(([0,1]\times \Sigma)\coprod \Sigma)/\sim\qquad (1,x)\sim f(x).
\eeq
This gives a $3$-manifold with boundary $\partial \Cyl(f) = ([0] \times \Sigma ) \coprod \Sigma$. We use the following notation for the inclusion of the two boundary components:
\begin{align*}
    \iin \colon \Sigma \to \Cyl(f), & \qquad x \mapsto (0,x);\\
    \iout \colon \Sigma \to \Cyl(f), & \qquad x \mapsto y \sim (1, f^{-1}(x)).
\end{align*}
We also have a projection
\begin{align*}
    p \colon \Cyl(f) & \to \Sigma\\
    (t,x) & \mapsto f(x),
\end{align*}
which satisfies $p \circ \iin = f$ and $p \circ \iout = \id_\Sigma$. Consider the open cover
\begin{align}\label{eq: cover of [0,1]}
    Z= [0,1) \coprod (0,1]
\end{align}
of $[0,1]$. It induces an open cover of $\Cyl(f)$ with as follows:
\begin{align}\label{eq: cover of cyl}
\begin{array}{ll}
    \lambda_0 \colon [0,1) \times \Sigma \hookrightarrow \Cyl(f), \qquad & (t,x) \mapsto (t,x); \\
    \lambda_1 \colon (0,1] \times \Sigma \hookrightarrow \Cyl(f), \qquad & (t,x) \mapsto (t, f^{-1}(x)).
\end{array}
\end{align}

With this notation established, we
turn to the Freed--Quinn construction. They construct an equivariant line bundle over $\Buntri$, i.e. a functor $\FQ\colon\Buntri\sq\Diff(\Sigma)\to \Line$; this is equivalent to a $\Diff(\Sigma)$-equivariant $U(1)$-bundle, as explained in Proposition \ref{prop: line bundles are U(1) bundles}. The functor is defined as follows: for each object of $\Buntri$ take the trivial line, i.e., $\FQ((\tri,\rho)):=\C$. Recall that the morphisms in the quotient groupoid $\Buntri\sq\Diff(\Sigma)$ consist of tuples $((T, h), f)$, where $f\colon\Sigma \to \Sigma$ is a diffeomorphism, $T$ is a mutual refinement of $\tri_1$ and $f^{-1}(\tri_2)$ and $h\colon Y_T\to G$ conjugates $\rho_1$ to $f_\tri^{*}\rho_2$ (after pulling back to $Y_T$). To match the notation in Freed--Quinn, we denote such a $((T,h),f)$ as $\tilde{f}$; it fits in the diagram below.
\beq\label{eq: diagram for f-tilde}
\begin{tikzpicture}[baseline=(basepoint)];
\node (A) at (0,0) {$P_{u_{\tri_1}, \rho_1}$};
\node (B) at (4,0) {$P_{u_{\tri_2}, \rho_2}$}; 
\node (C) at (0,-1.2) {$\Sigma$};
\node (D) at (4,-1.2) {$\Sigma.$};
\draw[->] (A) to node [above] {$\tilde{f}$} (B);
\draw[->] (A) to (C);
\draw[->] (C) to node [above] {$f$} (D);
\draw[->] (B) to  (D);
\path (0,-.75) coordinate (basepoint);
\end{tikzpicture}
\eeq
The mapping cylinder $\Cyl(\tilde{f})$ is a $G$-bundle over $\Cyl(f)$; see~\cite[pg. 8]{GanterHecke}.

The mutual refinement $T$ of triangulations yields a triangulation of~$\Cyl(f)$ compatible with the triangulations $\tri_1$ and $\tri_2$ at the boundary. The $G$-bundle $\Cyl(\tilde{f})$ trivializes on the open cover associated with this triangulation. A choice of trivialization specifies a map $\varphi_{\tilde{f}} \colon \Cyl(f)\to BG$ classifying $\Cyl(\tilde{f})$. 
Consider the pullback in cellular cohomology of the 3-cocycle $\alpha$ along $\varphi_{\tilde{f}}$, and pair this with a 3-cycle representative of the fundamental class $\Psi_{\Cyl(f)_T}\in Z_3(\Cyl(f))$ for the chosen triangulation; i.e. define 

\beq\label{eq:FQcocycle}
R_{\sf FQ}((T,h),f):=\langle \Psi_{\Cyl(f)_T}, \varphi_{\tilde{f}}^*\alpha\rangle \in \U(1),\qquad \alpha\in Z^3(BG;U(1))
\eeq
The value of the functor $\FQ$ on morphisms consists of multiplication by this element of $\U(1)$ as in \ref{subsection: cocycles}.

Because the inclusion $\iout \colon \Sigma\hookrightarrow\Cyl(f)$ is a homotopy equivalence and $\Sigma$ is 2-dimensional, the 3-cocycle $\varphi_{\tilde{f}}^*\alpha$ admits a coboundary $\Gamma'\in C^2(\Cyl(f); \U(1))$. For any such $\Gamma'$, we have that $d(\iout^*\Gamma')= \rho_2 ^* \alpha$, so that for any other cochain $\gamma_2 \in C^2(\Sigma; \U(1))$ with $d\gamma_2 = \rho_2^*\alpha$, we have a cocycle $\gamma_2/ \iout^*\Gamma'$. We can pull back this cocycle along $p_0$ and multiply by $\Gamma'$ to produce a new cochain $\Gamma$ which satisfies $d\Gamma = \varphi_{\tilde{f}}^* \alpha$ and $\iout^*\Gamma = \gamma_2$. This choice of $\Gamma$ is unique modulo exact cochains.
Thus one can also write
\begin{align*}
    R_{\sf FQ}((T,h),f) &=\langle \Psi_{\Cyl(f)_T}, \varphi_{\tilde{f}}^*\alpha\rangle\\
                        &= \langle \Psi_{\Cyl(f)_T}, d\Gamma\rangle\\
                        &= \langle \partial \Psi_{\Cyl(f)_T}, \Gamma\rangle\\
                        &= \frac{\langle \Psi_{\Sigma_{\tri_2}}, \iout^*\Gamma\rangle}{\langle \Psi_{\Sigma_{\tri_1}}, \iin^*\Gamma\rangle}\\
                        & = \frac{\langle \Psi_{\Sigma_{\tri_2}}, \gamma_2\rangle}{\langle \Psi_{\Sigma_{\tri_1}}, \iin^*\Gamma\rangle}.
\end{align*}

\subsection{Proof of Theorem~\ref{thm:main}}\label{sec:proof of main thm}
We now have two $\Diff(\Sigma)$-equivariant $\U(1)$-bundles over $\Bun_G(\Sigma)$, $\sPG$ and $\FQ$. Theorem~\ref{thm:main} claims that there is a $\Diff(\Sigma)$-equivariant isomorphism of these line bundles. Both bundles trivialize on $\Delta(\Sigma, G)$, and hence are determined by cocycles on the groupoid $\Buntri\sq\Diff(\Sigma)$.
\beq\label{eq:thecocycles}
R_{\sPG}, R_{\sf FQ}\colon \Delta(\Sigma,G)\times_{\Bun_G(\Sigma)\sq\Diff(\Sigma)}\Delta(\Sigma,G) \to \U(1)
\eeq
We will prove Theoroem~\ref{thm:main} by explicitly calculating these cocycles and finding they agree. 

As in \eqref{eq:finalcocycle}, the cocycle for each bundle breaks down into two pieces, namely one encoding the bundle over $\Bun_G(\Sigma)_\Delta$, and one encoding the $\Diff(\Sigma)$-equivariance. In this case, the first cocycle further decomposes, using the observation that any morphism $(T, h)$ in $\Buntri$ can be uniquely factored as a composition of morphisms $ (T, 1_g)$ and $(\id_\tri, h)$, i.e., a refinement of triangulations and an isomorphism of cocycles for the respective $G$-bundles. As we are checking an equality of cocycles for a $\U(1)$-bundle on a groupoid, the cocycle condition shows that Theorem~\ref{thm:main} follows from verifying the equality of cocycles on each of these more basic morphisms; this is the content of the next three lemmas. 

\begin{lem}
    For a morphism in $\Buntri$ asssociated to a refinement of triangulations, i.e. of the form $(T,1_G) \colon (\tri_1, \rho_1) \to (\tri_2, \rho_2)$, the cocycles~\eqref{eq:thecocycles} agree.
\end{lem}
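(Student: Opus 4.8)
The plan is to show that on a pure refinement $(T,1_G)$, which in the quotient groupoid $\Buntri\sq\Diff(\Sigma)$ corresponds to the diffeomorphism $f=\id$, both cocycles take the value $1\in\U(1)$, and hence agree. Note first that since $h=1_G$, the $1$-morphism condition forces $v_1^*\rho_1=v_2^*\rho_2$ on $Y_T$, so $(T,1_G)$ records only a common refinement of the two triangulations carrying the same underlying $G$-bundle.

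First I would evaluate $R_{\sPG}$ via Lemma~\ref{lem:triangulationcocycle}. With $h=1_G$ the correction term ${}^h\gamma$ of Theorem~\ref{thm: lifting lemma} is trivial, since the factor $\beta$ collapses to $1$ because $\alpha$ is normalized; thus $R_{\sPG}((T,1_G))=\langle\Psi_{\Sigma_T},\gamma_{T,v_1^*\rho_1}\rangle$. Using the refinement-compatibility of the section recorded after~\eqref{eq:trivsigma2}, namely $[\gamma_{T,v_1^*\rho_1}]=[v_1^*\gamma_{\tri_1,\rho_1}]$, together with the fact that an orientation-preserving refinement preserves the fundamental cycle, $(v_1)_*\Psi_{\Sigma_T}=\Psi_{\Sigma_{\tri_1}}$, I can push the pairing down to $\tri_1$:
\[
R_{\sPG}((T,1_G))=\langle (v_1)_*\Psi_{\Sigma_T},\gamma_{\tri_1,\rho_1}\rangle=\langle\Psi_{\Sigma_{\tri_1}},\gamma_{\tri_1,\rho_1}\rangle=1,
\]
the final equality being the defining normalization~\eqref{eq:sectiondef} of the section.

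Next I would evaluate $R_{\sf FQ}((T,1_G),\id)$. With $f=\id$ and $h=1_G$ the bundle $\Cyl(\tilde f)$ over $\Cyl(\id)$ is the pullback of the underlying bundle along the projection $p\colon\Cyl(\id)\to\Sigma$, so its classifying map factors as $\varphi_{\tilde f}=\rho\circ p$. As $\Sigma$ is $2$-dimensional, $\rho^*\alpha$ admits a primitive $\gamma$ with $d\gamma=\rho^*\alpha$, and then $\Gamma:=p^*\gamma$ is a primitive of $\varphi_{\tilde f}^*\alpha$. Since the difference of $\partial\Psi_{\Cyl(\id)_T}=\Psi_{\Sigma_{\tri_2}}-\Psi_{\Sigma_{\tri_1}}$ is null-homologous in $\Cyl(\id)$, the value~\eqref{eq:FQcocycle} is independent of the chosen primitive, so using this $\Gamma$ and Stokes' theorem as in \S\ref{sec:mappingcyl} gives
\[
R_{\sf FQ}((T,1_G),\id)=\langle\partial\Psi_{\Cyl(\id)_T},p^*\gamma\rangle=\frac{\langle\Psi_{\Sigma_{\tri_2}},(p\circ\iout)^*\gamma\rangle}{\langle\Psi_{\Sigma_{\tri_1}},(p\circ\iin)^*\gamma\rangle}.
\]
Because $p\circ\iin=p\circ\iout=\id_\Sigma$, both restrictions equal $\gamma$, and as the two boundary fundamental cycles both refine to $\Psi_{\Sigma_T}$ the numerator and denominator coincide, yielding $R_{\sf FQ}=1$. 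Hence $R_{\sPG}=R_{\sf FQ}$ on this morphism.

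The main obstacle I anticipate is bookkeeping rather than conceptual: I must check carefully that (i) $\Cyl(\tilde f)$ really is the projection-pullback when $f=\id$ and $h=1_G$, so that $\varphi_{\tilde f}$ genuinely factors through the $2$-dimensional $\Sigma$ and the primitive $p^*\gamma$ is available; and (ii) the orientations are arranged so that $\partial\Psi_{\Cyl(\id)_T}=\Psi_{\Sigma_{\tri_2}}-\Psi_{\Sigma_{\tri_1}}$ with the signs that make the two boundary pairings cancel. Once the factored primitive is in place, both evaluations reduce to the normalization~\eqref{eq:sectiondef} and the invariance of the fundamental cycle under refinement, both already established.
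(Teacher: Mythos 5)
Your proof is correct and takes essentially the same route as the paper's: with $h=1_G$ the twist ${}^h\gamma$ collapses because $\alpha$ is normalized, so $R_{\sPG}$ reduces to the normalization \eqref{eq:sectiondef}, and for $R_{\sf FQ}$ the paper likewise uses $\Cyl(\tilde f)\cong [0,1]\times P_{u_T,v_2^*\rho_2}$ with the primitive $\Gamma = p^*\gamma_{T,v_2^*\rho_2}$ and $p\circ \iin = p\circ \iout = \id_\Sigma$. Your detour through $(v_1)_*\Psi_{\Sigma_T}=\Psi_{\Sigma_{\tri_1}}$ is harmless but unnecessary, since $(T,v_1^*\rho_1)$ is itself an element of $\Delta(\Sigma,G)$ and the defining normalization \eqref{eq:sectiondef} already gives $\langle \Psi_{\Sigma_T},\gamma_{T,v_1^*\rho_1}\rangle = 1$ directly.
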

\bp
The refinement $T$ of the triangulations $\tri_1, \tri_2$ induces a refinements $v_i \colon Y_T \to Y_{\tri_i}$ of the associated open covers, such that the pullbacks $v_1^* \rho_1$ and $v_2^*\rho_2$ are equal. 
By Lemma~\ref{lem:triangulationcocycle} 
\begin{align*}
    R_{\sPG}(T,1_G)&=\langle \Psi_{\Sigma_T}, \gamma_{T, v_1^*\rho_1}\rangle\\
    &= 1
\end{align*}

On the other hand, it is not hard to see that in this setting, we have $\Cyl(\id) = [0, 1] \times \Sigma$ and $\Cyl(\varphi_{\tilde f}) \cong [0,1] \times P_{u_T, v_2^*\rho_2}$ as a principal $G$-bundle over $[0,1] \times \Sigma$. This means that we can take $\Gamma = p^*\gamma_{T, v_2^* \rho_2}$, so that 
\begin{align*}
    R_{\sf FQ}(T,1_G) &= \frac{\langle \Psi_{\Sigma_{\tri_2}}, \iout^*p^* \gamma_{T, v_2^* \rho_2} \rangle}{\langle \Psi_{\Sigma_{\tri_1}}, \iin^* p^* \gamma_{T, v_2^* \rho_2}, \rangle}.
\end{align*}
Since both $p \circ \iout$ and $p \circ \iin$ are equal to $\id_\Sigma$ in this setting, we have
\begin{align*}
    R_{\sf FQ}(T, 1_G) = \frac{\langle \Psi_{\Sigma_{T}}, \gamma_{T, v_2^* \rho_2} \rangle}{\langle \Psi_{\Sigma_T}, \gamma_{T, v_2^* \rho_2}, \rangle} =1.
\end{align*}
\ep

\begin{lem}
For a morphism in $\Buntri$ of the form $(\id_\tri, h) \colon (\tri, \rho_1) \to (\tri, \rho_2)$, the cocycles~\eqref{eq:thecocycles} agree.
\end{lem}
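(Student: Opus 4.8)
The plan is to compute both cocycles on the morphism $(\id_\tri, h)$ and check that each equals $\langle \Psi_{\Sigma_\tri}, \beta\rangle^{-1}$, where $\beta$ is the $2$-cochain \eqref{eq:betaeq} assembled from $h$, $\rho_1$, $\rho_2$, and the associator $\alpha$. The $\sPG$-side is immediate: Lemma~\ref{lem:triangulationcocycle} with the trivial refinement $T = \tri$ gives $R_{\sPG}(\id_\tri, h) = \langle \Psi_{\Sigma_\tri}, {}^h\gamma_{\tri, \rho_1}\rangle$, and Theorem~\ref{thm: lifting lemma} identifies the transported cochain as ${}^h\gamma_{\tri, \rho_1} = \gamma_{\tri, \rho_1}/\beta$. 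Since the section \eqref{eq:sectiondef} was normalized so that $\langle \Psi_{\Sigma_\tri}, \gamma_{\tri, \rho_1}\rangle = 1$, this yields $R_{\sPG}(\id_\tri, h) = \langle \Psi_{\Sigma_\tri}, \beta\rangle^{-1}$.

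For the Freed--Quinn side I will first specialize the construction of \S\ref{sec:mappingcyl} to $f = \id_\Sigma$, so that $\Cyl(f) \cong [0,1]\times\Sigma$ and $p\circ\iin = p\circ\iout = \id_\Sigma$. Taking the normalized representative $\gamma_2 = \gamma_{\tri, \rho_2}$ in the Stokes' form of \eqref{eq:FQcocycle}, the boundary pairing of \S\ref{sec:mappingcyl} reduces, using $\langle\Psi_{\Sigma_\tri}, \gamma_{\tri,\rho_2}\rangle = 1$, to
\[
R_{\sf FQ}(\id_\tri, h) = \frac{\langle \Psi_{\Sigma_\tri}, \gamma_{\tri, \rho_2}\rangle}{\langle \Psi_{\Sigma_\tri}, \iin^*\Gamma\rangle} = \langle \Psi_{\Sigma_\tri}, \iin^*\Gamma\rangle^{-1},
\]
where $\Gamma$ is any $2$-cochain on the cylinder with $d\Gamma = \varphi_{\tilde f}^*\alpha$ and $\iout^*\Gamma = \gamma_{\tri, \rho_2}$. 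Thus the whole comparison reduces to the single identity $\langle \Psi_{\Sigma_\tri}, \iin^*\Gamma\rangle = \langle \Psi_{\Sigma_\tri}, \beta\rangle$.

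Establishing this identity is the crux, and it is where the geometry of the cylinder bundle enters. I will present $\Cyl(\tilde f)\to[0,1]\times\Sigma$ by an explicit \v Cech cocycle on the prism triangulation carrying $\tri$ on each end (following \cite[pg.~8]{GanterHecke}): it restricts to $\rho_1$ at $t=0$, to $\rho_2$ at $t=1$, and its transition data across the prisms is given by $h$. Expanding the pullback $\varphi_{\tilde f}^*\alpha$ simplex by simplex then produces precisely the three associator factors that make up $\beta$ in \eqref{eq:betaeq}. Concretely I will take $\Gamma$ to be the cochain equal to $\gamma_{\tri, \rho_2}$ on the top face and extended down each prism by these $\beta$-factors, so that $d\Gamma = \varphi_{\tilde f}^*\alpha$ and $\iout^*\Gamma = \gamma_{\tri, \rho_2}$ hold by construction; the bottom restriction then satisfies $\iin^*\Gamma = \beta\cdot\gamma_{\tri, \rho_2}$ up to an exact cochain, equivalently ${}^h(\iin^*\Gamma) = \gamma_{\tri, \rho_2}\cdot d(-)$. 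Pairing with the cycle $\Psi_{\Sigma_\tri}$ annihilates the exact term and, together with $\langle \Psi_{\Sigma_\tri}, \gamma_{\tri, \rho_2}\rangle = 1$, gives $\langle \Psi_{\Sigma_\tri}, \iin^*\Gamma\rangle = \langle \Psi_{\Sigma_\tri}, \beta\rangle$. Combining with the first paragraph, $R_{\sf FQ}(\id_\tri, h) = \langle \Psi_{\Sigma_\tri}, \beta\rangle^{-1} = R_{\sPG}(\id_\tri, h)$.

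The main obstacle I expect is purely combinatorial: writing the prism cocycle for $\Cyl(\tilde f)$ and carrying out the simplicial expansion of $\varphi_{\tilde f}^*\alpha$ carefully enough to reproduce the three associator factors of $\beta$ with the correct orientations inherited from $\partial\Cyl(f)$ (the reversed orientation of the in-boundary being exactly what places $\iin^*\Gamma$ in the denominator). Once $\iin^*\Gamma$ is identified with $\beta\cdot\gamma_{\tri, \rho_2}$ modulo exact cochains, the remaining pairing argument is routine and mirrors the computations in the preceding lemmas.
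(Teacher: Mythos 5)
Your reduction is correct and, once the prism computation you flag as the ``main obstacle'' is actually carried out, it yields a valid proof: the $\sPG$-side computation $R_{\sPG}(\id_\tri,h)=\langle\Psi_{\Sigma_\tri},{}^h\gamma_{\tri,\rho_1}\rangle=\langle\Psi_{\Sigma_\tri},\beta\rangle^{-1}$ is right (since ${}^h\gamma_{\tri,\rho_1}=\gamma_{\tri,\rho_1}/\beta$ and \eqref{eq:sectiondef} normalizes the $\gamma$-pairing), your use of the Stokes formula with the normalized representative $\gamma_2=\gamma_{\tri,\rho_2}$ is legitimate, and the identification $\iin^*\Gamma=\beta\cdot\gamma_{\tri,\rho_2}$ modulo exact cochains is exactly what the standard prism argument produces, the three tetrahedra of each prism over a triangle of $\tri$ contributing the three associator factors of \eqref{eq:betaeq} with the sign pattern matching their alternating orientations.

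However, this is a genuinely different route from the paper's, which avoids all simplicial bookkeeping. The paper applies Theorem \ref{thm: lifting lemma} to get a lift $\varphi_{Y_\tri,h,\eta}\colon \cP_{u_\tri,\rho_1,\gamma_{\tri,\rho_1}}\to\cP_{u_\tri,\rho_2,{}^h\gamma_{\tri,\rho_1}}$, interprets it (via Remark \ref{rmk:bundle as functor}) as a natural transformation between the two functors $\check{C}(Y_\tri)\to\pt\sq\cG$, and uses that such a natural transformation is the same thing as a functor $[1]^+\times\check{C}(Y_\tri)\to\pt\sq\cG$; pulling back along $\check{C}(Z)\to[1]^+$ for the cover $Z=[0,1)\coprod(0,1]$ gives a functor $\check{C}(Z\times Y_\tri)\to\pt\sq\cG$, i.e., a pair $(\rho_\Cyl,\Gamma)$ on the cylinder with $d\Gamma=\rho_\Cyl^*\alpha$, $\iin^*\Gamma=\gamma_{\tri,\rho_1}$, and $\iout^*\Gamma={}^h\gamma_{\tri,\rho_1}$, all for free. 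Choosing this $\Gamma$ (i.e., taking $\gamma_2={}^h\gamma_{\tri,\rho_1}$ in the Stokes formula rather than your $\gamma_{\tri,\rho_2}$), the boundary pairing immediately gives $R_{\sf FQ}(\id_\tri,h)=\langle\Psi_{\Sigma_\tri},{}^h\gamma_{\tri,\rho_1}\rangle$ with no mention of $\beta$, no comparison modulo exact cochains, and no orientations to track. In short: your approach makes the geometric origin of $\beta$ visible (it is the prism/chain-homotopy operator applied to $\alpha$, a categorified transgression), at the cost of combinatorics that must be done carefully; the paper's approach buys brevity and robustness by reusing the datum $\eta$ from the lifting lemma, in which the prism identity \eqref{eq:ratioofrho} is already packaged, so that the cylinder cochain $\Gamma$ comes from abstract nonsense rather than from an explicit triangulation.
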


\bp
By Lemma~\ref{lem:triangulationcocycle},
\begin{align*}
    R_{\sPG}(\id_\tri,h) = \langle \Psi_{\Sigma_\tri}, \,^h\gamma_{\tri,\rho_1}\rangle.\\
\end{align*}
We now calculate the value of $R_{\sf FQ}$. Note that for the morphism $(\id_\tri, h)$, the notation \eqref{eq: diagram for f-tilde} simplifies to $f = \id_\Sigma$ (so that $\Cyl(f) = [0,1]\times \Sigma$) and $\tilde{f} = \varphi_{Y_\tri, h}$. Recalling the construction of $P_{u_\tri, \rho_1}$ and $P_{u_\tri, \rho_2}$ as quotients of $Y_\tri \times G$ from \eqref{eq:epi2} and ~\eqref{eq:epi3}, we note that the bundle $\Cyl(\varphi_{Y_\tri, h})$ has a natural section over the cover $Z \times Y_\tri$ of $[0,1] \times \Sigma$ (for $Z = [0,1) \coprod (0,1]$ as in \eqref{eq: cover of [0,1]}). This gives a cocycle $\rho_\Cyl$ determined by $\rho_1$ over $[0,1) \times Y \times_\Sigma Y$, by $\rho_2$ over $(0,1] \times Y \times_\Sigma Y$, and by $h$ over the overlap $(0,1)$. We now look for a convenient cochain $\Gamma$ with $d\Gamma = \rho_\Cyl^* \alpha$.

To do this, we use the following alternate construction of $\rho_\Cyl$. We apply Theorem \ref{thm: lifting lemma} to $\varphi_{Y_\tri, h}$ to obtain a 1-morphism of 2-group bundles $\varphi_{Y_\tri, h, \eta} \colon (u_\tri, \rho_1, \gamma_{u_\tri, \rho_1}) \to (u_\tri, \rho_2, {}^h \gamma_{u_\tri, \rho_1})$. Recall from Remark \ref{rmk:bundle as functor} that $(\rho_1, \gamma_{u_\tri, \rho_1})$ and $(\rho_2, {}^h\gamma_{u_\tri, \rho_1})$ can be interpreted as bifunctors $\check{C}(Y_\tri) \to * \sq \cG$, while $\varphi_{Y_\tri, h, \eta}$ gives a natural transformation  between them. On the other hand, this natural transformation is equivalent to a functor 
\begin{align*}
    [1]^+ \times\check{C}(Y_\tri)  \to * \sq \cG,
\end{align*}
for $[1]^+$ the two-object category with a single nonidentity morphism in each direction between the two objects. Furthermore, there is a natural epimorphism from the \v Cech groupoid $\check{C}(Z)$ to $[1]^+$. Composing, we get a functor from the \v Cech groupoid for the product cover $Z \times Y_\tri $ of $[0,1] \times \Sigma$
\begin{align*}
    \check{C}(Z \times Y_\tri) = \check{C}(Z) \times \check{C}(Y_\tri) \to [1]^+ \times \check{C}(Y_\tri) \to * \sq \cG,
\end{align*}
whose value on morphisms we can easily calculate to agree with $\rho_\Cyl$. Applying Remark \ref{rmk:bundle as functor} again, this functor is given by a pair $(\rho_\Cyl, \Gamma)$, where $\Gamma$ satisfies $d\Gamma = \rho_\Cyl^*\alpha$ as desired. By construction, $\iin^* \Gamma = \gamma_{u_\tri, \rho_1}$ and $\iout^* \Gamma = {}^h\gamma_{u_\tri, \rho_1}$, which yields
\begin{align*}
    R_{\sf FQ}(\id_\tri, h) & =  \frac{\langle \Psi_{\Sigma_{\tri}}, \iout^*\Gamma \rangle}{\langle \Psi_{\Sigma_{\tri}}, \iin^*\Gamma\rangle} \\
    & = \frac{\langle \Psi_{\Sigma_{\tri}}, {}^h\gamma_{u_\tri, \rho_1}\rangle}{\langle \Psi_{\Sigma_{\tri}}, \gamma_{u_\tri, \rho_1} \rangle} \\
    & = \langle \Psi_{\Sigma_{\tri}}, {}^h\gamma_{u_\tri, \rho_1}\rangle.
\end{align*}
\ep

\begin{lem}
For a morphism in $\Buntri\sq\Diff(\Sigma)$ associated to a diffeomorphism $f\colon \Sigma\to \Sigma$, i.e. of the form $((\id_\tri, 1_G),f) \colon (\tri, \rho) \cdot f \to (\tri, \rho)$, the cocycles~\eqref{eq:thecocycles} agree.
\end{lem}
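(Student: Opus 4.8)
The plan is to evaluate both cocycles of~\eqref{eq:thecocycles} on the pure-equivariance morphism $((\id_\tri,1_G),f)\colon (\tri,\rho)\cdot f\to(\tri,\rho)$ and verify they coincide. On the $\sPG$ side there is nothing to compute: Lemma~\ref{lem:triangulationcocycle} gives directly
$$
R_{\sPG}((\id_\tri,1_G),f)=\langle \Psi_{\Sigma_{f^{-1}(\tri)}},\, f_\tri^*\gamma_{\tri,\rho}\rangle^{-1}.
$$
For the Freed--Quinn side I would start from the coboundary form of~\eqref{eq:FQcocycle}, which writes $R_{\sf FQ}$ as the ratio $\langle\Psi_{\Sigma_{\tri_2}},\iout^*\Gamma\rangle/\langle\Psi_{\Sigma_{\tri_1}},\iin^*\Gamma\rangle$ for any cochain $\Gamma$ on $\Cyl(f)$ with $d\Gamma=\varphi_{\tilde f}^*\alpha$; here the target is $(\tri_2,\rho_2)=(\tri,\rho)$ and the source is $(\tri_1,\rho_1)=(\tri,\rho)\cdot f=(f^{-1}(\tri),f_\tri^*\rho)$. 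Since changing $\Gamma$ by a cocycle leaves $\langle\Psi_{\Cyl(f)_T},d\Gamma\rangle$ unchanged, $R_{\sf FQ}$ is independent of the choice of $\Gamma$, so the whole task reduces to choosing $\Gamma$ cleverly.

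The key geometric input is that in this case the mutual refinement of triangulations is trivial: both $\tri_1=f^{-1}(\tri)$ and $f^{-1}(\tri_2)=f^{-1}(\tri)$ coincide, so $\Cyl(f)=\bigl(([0,1]\times\Sigma)\sqcup\Sigma\bigr)/\!\sim$ with projection $p\colon\Cyl(f)\to\Sigma$ satisfying $p\circ\iout=\id_\Sigma$ and $p\circ\iin=f$, and $p$ is a homotopy equivalence with deformation-retract section $\iout$. I would then identify the bundle $\Cyl(\tilde f)$ over $\Cyl(f)$: since $\iout^*\Cyl(\tilde f)\cong P_{\tri,\rho}$, homotopy invariance of $G$-bundles forces $\Cyl(\tilde f)\cong p^*P_{\tri,\rho}$; restricting along $\iin$ then recovers $f^*P_{\tri,\rho}\cong P_{f^{-1}(\tri),f_\tri^*\rho}$ via Proposition~\ref{prop: triangulation equivariance}, confirming consistency with the known $\iin$-boundary of the mapping cylinder. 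Consequently I may take the classifying cocycle of $\Cyl(\tilde f)$ to be $p^*\rho$, so that $\varphi_{\tilde f}^*\alpha=p^*(\rho^*\alpha)$, and choose
$$
\Gamma:=p^*\gamma_{\tri,\rho},\qquad d\Gamma=p^*(d\gamma_{\tri,\rho})=p^*(\rho^*\alpha)=\varphi_{\tilde f}^*\alpha.
$$

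Finally I would read off the two boundary restrictions of this $\Gamma$ from $p\circ\iout=\id_\Sigma$ and $p\circ\iin=f$, namely $\iout^*\Gamma=\gamma_{\tri,\rho}$ and $\iin^*\Gamma=f^*\gamma_{\tri,\rho}=f_\tri^*\gamma_{\tri,\rho}$, and invoke the normalization~\eqref{eq:sectiondef} of the section $\sigma$, i.e.\ $\langle\Psi_{\Sigma_\tri},\gamma_{\tri,\rho}\rangle=1$. Substituting into the coboundary form gives $R_{\sf FQ}((\id_\tri,1_G),f)=1/\langle\Psi_{\Sigma_{f^{-1}(\tri)}},f_\tri^*\gamma_{\tri,\rho}\rangle$, which is exactly $R_{\sPG}((\id_\tri,1_G),f)$. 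The main obstacle is the identification $\Cyl(\tilde f)\cong p^*P_{\tri,\rho}$ together with the license to take $\Gamma=p^*\gamma_{\tri,\rho}$: one must check that the mapping-cylinder cover of $\Cyl(f)$ induced by the (trivial) refinement $T$ is compatible with pulling back the trivialization of $P_{\tri,\rho}$ along $p$, so that $\varphi_{\tilde f}^*\alpha$ and $p^*(\rho^*\alpha)$ agree as cochains rather than merely in cohomology. The bookkeeping identity $f^*=f_\tri^*$ on cochains (the map $f_\tri$ being $f$ read on the triangulation cover) is then routine.
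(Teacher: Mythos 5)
Your proposal is correct and follows essentially the same route as the paper: both take $\Gamma = p^*\gamma_{\tri,\rho}$, verify $d\Gamma = \varphi_{\tilde f}^*\alpha$, and read off $R_{\sf FQ}$ from the boundary restrictions $\iout^*\Gamma=\gamma_{\tri,\rho}$ and $\iin^*\Gamma=f_\tri^*\gamma_{\tri,\rho}$ together with the normalization $\langle\Psi_{\Sigma_\tri},\gamma_{\tri,\rho}\rangle=1$ from~\eqref{eq:sectiondef}. The one step you flag as the main obstacle---that $\varphi_{\tilde f}^*\alpha$ and $p^*(\rho^*\alpha)$ agree as cochains rather than merely in cohomology---is precisely what the paper settles by checking the identity separately over the two pieces $\lambda_0,\lambda_1$ of the cover~\eqref{eq: cover of cyl} of $\Cyl(f)$, on each of which $\Cyl(\tilde f)$ is literally a product bundle.
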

\bp

By Lemma~\ref{lem:triangulationcocycle}, 
\begin{align*}
    R_{\sPG}((\id_\tri, e),f)&= R_{\Diff(\Sigma)}((u_\tri,\rho), f)\\
    &= \langle \Psi_{\Sigma_{f^{-1}(\tri)}}, f_\tri^*\gamma_{\tri,\rho}\rangle^{-1}.
\end{align*}

To calculate $R_{\sf FQ}((\id_\tri, 1_G),f)$, we need to choose a convenient cochain $\Gamma$ satisfying the condition $d\Gamma = \varphi_{\tilde f}^* \alpha$ on $\Cyl(f)$. We claim that in this case we can take $\Gamma = p^* \gamma_{\tri, \rho}$. To see this, we show that the condition holds over the two pieces of the open cover \eqref{eq: cover of cyl}. 
We note that $\lambda_0^* \Cyl(\tilde f) \cong [0 \times 1) \times f^*P_{u_\tri, \rho}$, so that $\lambda_0^*\varphi_{\tilde f}$ classifies the bundle given by $\proj_\Sigma^* f^* \rho$. On the other hand, we have that $p \circ \lambda_0 = f \circ \proj_\Sigma$, so that 
\begin{align*}
   \lambda_0^*d(p^*\gamma_{\tri, \rho}) & = \proj_\Sigma^* f^* d\gamma_{\tri, \rho} \\
    & = \proj_\Sigma^* f^* (\rho^* \alpha), 
\end{align*} as desired. Similarly, $\lambda_1^* \Cyl(\tilde f) \cong (0, 1] \times P_{u_\tri, \rho}$, so that $\lambda_1^*\varphi_{\tilde f}$ classifies the bundle given by $\proj_\Sigma^*(\rho^* \alpha)$.  Since $p \circ \lambda_1 = \proj_\Sigma$, we have $\lambda_1^* d(p^* \gamma_{\tri, \rho}) = p^* d\gamma_{\tri, \rho} = \proj_\Sigma^* (\rho^*\alpha)$. We conclude that we do indeed have $d\Gamma = \varphi_{\tilde f}^* \alpha$ over all of $\Cyl(f)$, so that we can use $\Gamma$ to compute $R_{\sf FQ}$. 

Here we use that for a cochain $\beta \in C^n(\Sigma;\U(1))$ defined relative to the triangulation cover $f_\tri$, the pullback $f^* \colon C^n(\Sigma; \U(1)) \to C^n(\Sigma; \U(1))$ is realized by precomposing with $f_\tri^n$; that is, $f^* \beta = f_\tri^* \beta)$. We obtain

\begin{align*}
    R_{\sf FQ}((\id_\tri, 1_G), f) & = \frac{\langle \Psi_{\Sigma_{\tri}}, \iout^*p^*\gamma_{\tri, \rho} \rangle}{\langle \Psi_{\Sigma_{f^{-1}(\tri)}}, \iin^*p^* \gamma_{\tri, \rho}\rangle} \\ 
    & = \frac{\langle \Psi_{\Sigma_{\tri}}, \gamma_{\tri, \rho} \rangle}{\langle \Psi_{\Sigma_{f^{-1} (\tri)}}, f^* \gamma_{\tri, \rho}\rangle} \\
    & = \langle \Psi_{\Sigma_{f^{-1} (\tri)}}, f^*_\tri \gamma_{\tri, \rho}\rangle^{-1}.
\end{align*}
\ep

This completes the proof of Theorem \ref{thm:main}.

\section{Klein forms from higher geometry}

Theorem~\ref{thm:main} provides new methods for calculating the action of the mapping class group on the Freed--Quinn line bundle in purely group theoretic terms (via Definition~\ref{eq: diff action on hol P}). In this section, we illustrate such computational techniques in explicit examples, with an eye towards connecting with the classical theory of Klein forms. The main takeaway is that the algebraic computations carried out are more elementary than typical approaches to the mapping class group actions in Chern--Simons theory. 

The general approach to these calculations is described in~\S\ref{sec:MCGsetup}, before we specialize to Dehn twists in~\S\ref{sec:Dehn}, and lastly the connection to Klein forms in~\S\ref{ex: gammahol example}.

\subsection{Computing characters of mapping class groups via group cohomology of $\pi_1\Sigma$}\label{sec:MCGsetup}
Identifying an object of $\Bun_G(\Sigma)$ with  a homomorphism $\rhohol \colon \pi_1 \Sigma \to G$, the $\pi_0\Diff(\Sigma)$-action on $\sPG\to \Bun_G(\Sigma)$ can be restricted along the equivalence of groupoids,
\beq\label{eq:Difforbits}
\coprod_{[\rhohol]} \pt\sq \Stab_{\pi_0\Diff(\Sigma)}(\rhohol)\stackrel{\sim}{\hookrightarrow}  \Hom(\pi_1\Sigma, G)\sq \pi_0\Diff(\Sigma)
\eeq
for stabilizer subgroups $\Stab_{\pi_0\Diff(\Sigma)}(\rhohol)<\pi_0\Diff(\Sigma)$, where the coproduct runs over the set of orbits for the $\pi_0\Diff(\Sigma)$-action. 
The diffeomorphism group action on~$\sPG$ is determined by 
\beq\label{eq:RDiffrecal}
    R_{\Diff(\Sigma)} \colon \Hom(\pi_1\Sigma, G) \times \Diff(\Sigma) \to U(1),\quad  R_{\Diff(\Sigma)}(\rhohol,f)= \langle \Psi_{\Sigma}^{\rm grp}, f^*\tilde\gamma\rangle^{-1}
\eeq
which is computed by the pairing between the group homology fundamental class $[\Psi_\Sigma^{\rm grp}]\in \rmH_2(\pi_1\Sigma;\U(1))$ and the pullback of the cochain $\gamma\in C^2(\pi_1\Sigma;\U(1))$, as shown in Lemma \ref{lem: holonomy cocycle}. Fundamental classes have well-known explicit formulas (see Remark~\ref{rmk:fundamental cycle}), making the calculation of~\eqref{eq:RDiffrecal} very concrete in any given example.  
Using~\eqref{eq:Difforbits}, the $\Diff(\Sigma)$-equivariant structure on $\sPG$ is completely determined by homomorphisms 
\beq\label{eq:stabhomo}
    \stabhom{\rhohol} \colon \Stab_{\pi_0\Diff(\Sigma)}(\rhohol) & \to &U(1)\\ \nonumber
    f & \mapsto & R_{\Diff(\Sigma)}(\rhohol, f)
\eeq
for each $\Diff(\Sigma)$-orbit in $\Hom(\pi_1\Sigma,G)$. 
Below we will compute these homomorphisms in specific cases using the description~\eqref{eq:RDiffrecal}. 

With Klein forms as our end goal, we calculate~\eqref{eq:stabhomo} in the torus case $\Sigma = \mathbb{T}^2$; the general case follows the same structure using analogous formulas for the fundamental class of $\pi_1\Sigma$. In the genus 1 case, we have
\[
\pi_0\Diff(\Sigma)\simeq \SL_2(\Z),\quad \rmH_1(\mathbb{T}^2)\simeq\pi_1\mathbb{T}^2\simeq \Z^2,\quad \eo = \begin{bmatrix}
    1 \\ 0
\end{bmatrix},\ \et = \begin{bmatrix}
    0 \\ 1
\end{bmatrix}
\]
where $\eo$ and $\et$ 
correspond to the standard symplectic basis of ${\rm H}_1(\mathbb{T}^2)$. A homomorphism $\rhohol \colon \pi_1 \Sigma \to G$ is determined by a pair of commuting elements $\rhohol\left( \eo\right), \rhohol\left( \et\right) \in G$. A $\cG$-bundle lifting this $G$-bundle is the data of a normalized cochain $\gammahol \colon \Z^2 \times \Z^2 \to U(1)$ satisfying $d\gammahol = \rhohol^*\alpha$. 

The section $\sigma$ in~\eqref{eq: holonomy section} assigns to each $\rhohol$ a cochain $\gammahol_{\rhohol}$ satisfying the additional property that $\langle\Psi_\Sigma^{\rm grp},\gammahol_{\rhohol}\rangle =1$, which in this case is the condition 
\[
\gammahol_{\rhohol}(\eo, \et) = \gammahol_{\rhohol}(\et, \eo),\quad \Psi_\Sigma^{\rm grp}= \eo \otimes \et - \et \otimes \eo
\]
using the cycle representative of the fundamental class from Remark~\ref{rmk:fundamental cycle}. 

Given $A\in \SL_2(\Z)\simeq \pi_0\Diff(\Sigma)$, the homomorphism $A^* \rhohol \colon \pi_1 \Sigma \to G$ is determined by
\beq\label{eq: A acts on rhohol}
    A^*\rhohol\left( \eo\right) = \rhohol\left( \eo \right)^a \rhohol\left( \et\right)^c \quad \text{and} \quad A^*\rhohol\left( \et \right) = \rhohol\left( \eo \right)^b \rhohol\left( \et \right)^d,\quad A = \begin{bmatrix}
    a & b \\ c & d
    \end{bmatrix}
\eeq
and similarly, the effect on a 2-cochain $\gammahol$ is 
\beq
\gammahol_A:=A^*
\gammahol \left(\begin{bmatrix}
    x_1 \\ y_1
\end{bmatrix}, \begin{bmatrix}
    x_2 \\ y_2
\end{bmatrix}\right) = \gammahol \left( A\begin{bmatrix}
    x_1 \\ y_1
\end{bmatrix}, A\begin{bmatrix}
    x_2 \\ y_2
\end{bmatrix}\right).
\eeq
 Hence, the homomorphisms~\eqref{eq:stabhomo} determining the action of the mapping class group on $\sPG$ are given by 
\begin{align}\label{eq: important ratio}
    \stabhom{\rhohol}(A) = \langle \Psi_\Sigma^\Grp, \gammahol_{{\rhohol}, A} \rangle^{-1} = \frac {\gammahol_{{\rhohol},A }(\et, \eo)}{\gammahol_{{\rhohol},A} (\eo, \et)} = \frac{\gammahol_{\rhohol}\left( \begin{bmatrix}
        b \\ d
    \end{bmatrix},\begin{bmatrix}
        a \\ c
    \end{bmatrix}\right)}{\gammahol_{\rhohol}\left( \begin{bmatrix}
        a \\ c
    \end{bmatrix},\begin{bmatrix}
        b \\ d
    \end{bmatrix}\right)} = \frac{\gammahol_{\rhohol}(b\eo+ d\et, a\eo + c\et)}{\gammahol_{\rhohol}(a\eo + c \et, b\eo + d\et)}.
\end{align}
Given an explicit formula for $\gammahol_{\rhohol}$, it is a simple matter to evaluate the above ratio and find the value of $\stabhom{\rhohol}(A)$ in $U(1)$. This will be the case in Example \ref{ex: gammahol example} below. However, even when we do not have explicit formulas for $\gammahol_{\rhohol}$, it is still possible to calculate the above ratio by indirect (though still elementary) means, e.g., by induction on the entries of $A$. We illustrate this in Proposition \ref{ex: Nora example} below. 

\subsection{Actions by Dehn twists}\label{sec:Dehn}
We recall that the mapping class group of a surface is generated by Dehn twists. Continuing the above analysis in the case $\Sigma=\mathbb{T}^2$, we will compute the action of such Dehn twists on $G$-bundles determined by
\begin{align}\label{eq: rhohol}
    \rhohol\left( \eo \right) = g, \rhohol\left( \et \right) = 1_G \in G.
\end{align}
By~\eqref{eq: A acts on rhohol}, we have $A\in \Stab_{\SL_2(\Z)}(\rhohol)<\SL_2(\Z)$ if and only if $A\in \Gamma_1(n)$ for the congruence subgroup defined by
\begin{align}\label{eq: stabilizer condition}
  a \equiv 1 \text{ (mod $n$)}, \qquad  b \equiv 0 \text{ (mod $n$)},\qquad A=\left[\begin{array}{cc} a & b \\ c & d\end{array}\right]\in \SL_2(\Z)
\end{align}
where $n={\rm ord}(g)$ is the order of $g\in G$. In particular, the $n$th Dehn twist,
\[
T^n= \begin{bmatrix}
        1 & n \\ 0 & 1
    \end{bmatrix}
\]
satisfies the conditions \eqref{eq: stabilizer condition} and hence is in the stabilizer of $\rhohol$. The following provides a streamlined proof of a result\footnote{There is a sign difference originating from differing conventions. To remove the sign, for example, switch the roles of $\eo$ and $\et$ and consider the transpose of $T^N$.} of Ganter \cite[Lemma 2.13]{GanterHecke}. 

    \begin{prop}\label{ex: Nora example}
For $\rhohol$ defined by Equation \eqref{eq: rhohol} and $A=T^n$, we have 
\begin{align}
    \stabhom{\rhohol}(T^n) = \prod_{j=0}^{n-1} \alpha(g, g^j, g)^{-1} \in U(1).
\end{align}
    \end{prop}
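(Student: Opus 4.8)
The plan is to reduce the statement to the explicit ratio formula \eqref{eq: important ratio} and then extract the product through a telescoping recursion supplied by the cocycle condition $d\gammahol_{\rhohol} = \rhohol^*\alpha$. First I would substitute $A = T^n$, i.e.\ $(a,b,c,d) = (1,n,0,1)$, into \eqref{eq: important ratio}. Since $T^n\eo = \eo$ and $T^n\et = n\eo + \et$, this gives
\[
\stabhom{\rhohol}(T^n) = \frac{\gammahol_{\rhohol}(n\eo + \et,\, \eo)}{\gammahol_{\rhohol}(\eo,\, n\eo + \et)}.
\]
To handle the right-hand side it is natural to introduce, for each integer $m \ge 0$, the ratio
\[
r(m) := \frac{\gammahol_{\rhohol}(m\eo + \et,\, \eo)}{\gammahol_{\rhohol}(\eo,\, m\eo + \et)} \in \U(1),
\]
so that $\stabhom{\rhohol}(T^n) = r(n)$, and the goal becomes the evaluation of $r(n)$.

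The key step is to apply the cocycle condition $d\gammahol_{\rhohol} = \rhohol^*\alpha$ to the triple $(\eo,\, m\eo + \et,\, \eo)$. Using $\rhohol(\eo) = g$ and $\rhohol(m\eo + \et) = g^m$ (recall $\rhohol(\et) = 1_G$), the right-hand side equals $\alpha(g, g^m, g)$. Expanding the left-hand side with the bar differential and using $(m\eo + \et) + \eo = \eo + (m\eo + \et) = (m+1)\eo + \et$, the four resulting $\gammahol_{\rhohol}$-factors regroup exactly as $r(m)\,r(m+1)^{-1}$. This yields the recursion $r(m+1) = r(m)\,\alpha(g, g^m, g)^{-1}$. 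Telescoping from $m = 0$ to $n-1$ gives $r(n) = r(0)\prod_{j=0}^{n-1}\alpha(g, g^j, g)^{-1}$. Finally, the normalization built into the section~\eqref{eq: holonomy section}, namely $\langle \Psi_\Sigma^{\rm grp}, \gammahol_{\rhohol}\rangle = 1$, which here reads $\gammahol_{\rhohol}(\eo,\et) = \gammahol_{\rhohol}(\et, \eo)$, forces $r(0) = 1$. This completes the identification $\stabhom{\rhohol}(T^n) = \prod_{j=0}^{n-1}\alpha(g, g^j, g)^{-1}$.

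The main thing to get right is bookkeeping: the precise ordering and signs in the bar differential must be pinned down so that the four terms of $d\gammahol_{\rhohol}(\eo, m\eo + \et, \eo)$ genuinely collapse to $r(m)/r(m+1)$ rather than some other combination of the same factors. The substantive geometric input—that the $\et$-direction contributes nothing because $\rhohol(\et) = 1_G$, so the intermediate group elements are precisely the powers $g^m$—is exactly what makes the telescoping close up into the stated product. I expect no difficulty beyond this: the argument requires no explicit formula for $\gammahol_{\rhohol}$ and uses only the single normalization $r(0) = 1$ together with the cocycle identity, which is the sense in which it provides a streamlined route to Ganter's computation.
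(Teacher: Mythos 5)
Your proposal is correct and follows essentially the same route as the paper: both reduce to the ratio $\gammahol_{\rhohol}(m\eo+\et,\eo)/\gammahol_{\rhohol}(\eo,m\eo+\et)$, derive the recursion $r(m+1)=r(m)\,\alpha(g,g^m,g)^{-1}$ by applying $d\gammahol_{\rhohol}=\rhohol^*\alpha$ to the triple $(\eo,\,m\eo+\et,\,\eo)$, and close the induction/telescope with the normalization $\gammahol_{\rhohol}(\eo,\et)=\gammahol_{\rhohol}(\et,\eo)$ giving $r(0)=1$. The paper phrases this as induction on $b$ rather than a telescoping product, but the argument is identical.
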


\begin{proof}
In fact we show that 
    \begin{align}\label{eq: Dehn twist induction}
\frac{\gammahol_{\rhohol}\left( b\eo + \et, \eo\right)}{\gammahol_{\rhohol}\left(\eo, b\eo + \et\right)} = \prod_{j=0}^{b-1} \alpha(g, g^j, g)^{-1},\qquad b \in \Z_{\ge 0}, 
    \end{align}
where the $b=n$ case recovers the desired statement. 
When $b=0$,~\eqref{eq: Dehn twist induction} follows from the defining property of $\gammahol_{\rhohol}$. Arguing by induction, assume that Equation \eqref{eq: Dehn twist induction} holds for some fixed $b \ge 0$; then the condition $d\gammahol_{\rhohol} = \rhohol^*\alpha$ applied to the triple $\left(\eo, b\eo + \et, \eo\right)$ yields the equation
\begin{align*}
    \frac{\gammahol_{\rhohol}\left( b\eo+ \et, \eo\right)\gammahol_{\rhohol}\left( \eo, (b+1)\eo + \et)\right)}{\gammahol_{\rhohol}\left( (b+1)\eo + \et,\eo \right)\gammahol_{\rhohol}\left( \eo, b\eo + \et \right)} = \alpha\left( \rhohol\left(\eo \right), \rhohol\left(b\eo + \et \right), \rhohol\left(\eo \right)\right).
\end{align*}
Rearranging, we obtain 
\begin{align*}
    \frac{\gammahol_{\rhohol}\left( (b+1) \eo + \et , \eo\right)}{\gammahol_{\rhohol}\left( \eo, (b+1)\eo + \et\right)} = \frac{\gammahol_{\rhohol}\left( b\eo + \et, \eo\right)} {\gammahol_{\rhohol}\left( \eo, b\eo + \et \right)}\alpha(g, g^b, g)^{-1},
\end{align*}
and the desired result follows. 
\end{proof}

\subsection{Klein forms from higher geometry}\label{ex: gammahol example}\label{Freed example}

Next we specialize the techniques from~\S\ref{sec:MCGsetup} to the case that $\Sigma=\mathbb{T}$ and $G=\Z/n\Z$. The description~\eqref{eq:Difforbits} is particularly explicit in this case, 
\beq\label{eq:Kleinsetup}
\Hom(\pi_1\Sigma,\Z/n\Z)\sq \SL_2(\Z)\simeq \coprod_{m|n} \pt\sq \Gamma_1(m),
\eeq
where $\Gamma_1(m)<\SL_2(\Z)$ is the congruence subgroup defined as in~\eqref{eq: stabilizer condition}. Under this equivalence, we can choose representatives of isomorphism classes of $\Z/n\Z$-bundle to take the form~\eqref{eq: rhohol}. The decomposition~\ref{eq:Kleinsetup} guarantees that $\sPG$ for $\cG$ a categorical extension of $\Z/n\Z$ determines (and is determined by) characters of congruence subgroups via~\eqref{eq:stabhomo}. Our present goal is to compute these characters in examples, extracting transformation properties for Klein forms. By naturality and restriction to subgroups $\Z/m\Z<\Z/n\Z$, it suffices to consdier the component of~\eqref{eq:Kleinsetup} with $m=n$.

Klein forms are a flavor of modular forms with level structure. Our conventions below follow the description from \cite[Proposition 4.12]{Freed_Det}, where Klein forms arise as sections of a determinant line bundle. The mapping class group action is computed via Witten's holonomy formula involving $\eta$-invariants of $\bar\partial$-operators with an explicit formula~\cite[Equation 4.14]{Freed_Det}, which we can the compare with the characters constructed by $\sPG$ extracted via~\eqref{eq:Kleinsetup}. 

\begin{rmk}\label{eq:rmkFreedspin}
In \cite[\S4]{Freed_Det}, Freed studies the square root of the determinant line, i.e., the Pfaffian, over the moduli of $\Z/n\Z$-bundles on genus 1 curves together with a choice of spin structure. The moduli spaces in this paper do not include such spin structures, and hence below we take the tensor square of Freed's Pfaffian line, i.e., the determinant line. \end{rmk}

Let $\alpha^N \colon (\Z/n\Z)^3 \to U(1)$ be the $N$th power of the cocycle from Example \ref{ex: alpha} that generates $\rmH^3(\Z/n\Z;\U(1))$. Hence, for $n \in \Z$, and representatives $j,k,l \in \{0, 1, \ldots, n-1\}\simeq\Z/n\Z$
\begin{align}\label{eq:alphaZn}
    \alpha^N(j + n\Z, k + n\Z, l + n\Z) = \left\{ \begin{array}{ll}
       1  & \text{ if }k + l < n \\
       e^{\frac{2Nj\pi i}{n} }  & \text{ if } k + l \ge n.
    \end{array}\right.  
\end{align}
 
\begin{thm}\label{thm: cyclic rho example}
For $\rhohol \colon \Z^2 \to \Z /n\Z$ defined by $\rhohol(\eo) = 1 + n\Z, \rhohol(e_2)=n\Z$, and $A = \begin{bmatrix}
    a & b \\ c & d
\end{bmatrix} \in \Stab_{\SL_2\Z}(\rhohol)$, we have
\beq\label{eq:RrhoFreed}
    \stabhom{\rhohol}(A) = e^{2 \pi i N b / n^2 }.
\eeq
\end{thm}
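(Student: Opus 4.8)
The plan is to evaluate the character $\stabhom{\rhohol}$ directly from the group‑cohomological formula \eqref{eq: important ratio}, which for the matrix $A = \begin{bmatrix} a & b \\ c & d \end{bmatrix}$ specializes to the ratio $\stabhom{\rhohol}(A) = \gammahol_{\rhohol}(b\eo + d\et,\, a\eo + c\et)/\gammahol_{\rhohol}(a\eo + c\et,\, b\eo + d\et)$. The only nontrivial input is an explicit formula for the normalized cochain $\gammahol_{\rhohol}$ attached to $\rhohol$ by the section \eqref{eq: holonomy section}, namely a $2$-cochain on $\Z^2$ with $d\gammahol_{\rhohol} = \rhohol^*\alpha^N$ and $\gammahol_{\rhohol}(\eo,\et) = \gammahol_{\rhohol}(\et,\eo)$; once this is in hand the theorem becomes a one-line substitution.

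To produce such a cochain I would use that $\rhohol$ factors as $\Z^2 \xrightarrow{\mathrm{pr}_1} \Z \xrightarrow{q} \Z/n\Z$, with $\mathrm{pr}_1$ the first-coordinate projection and $q$ reduction mod $n$, so that $\rhohol^*\alpha^N = \mathrm{pr}_1^* q^*\alpha^N$ depends only on first coordinates. Since $\Z$ has cohomological dimension one, $q^*\alpha^N$ is exact, and I would write down an explicit normalized primitive $\bar\gamma$ on $\Z$ and set $\gammahol_{\rhohol} := \mathrm{pr}_1^*\bar\gamma$. Writing $[r] \in \{0,\dots,n-1\}$ for the residue of $r$, the candidate is $\bar\gamma(r,s) = e^{\pm 2\pi i N r [s]/n^2}$, with the sign fixed by the coboundary convention underlying \eqref{eq: important ratio}. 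The verification that $d\bar\gamma = q^*\alpha^N$ is the carry-cocycle identity: the coboundary telescopes to the exponent $r_1([r_2 + r_3] - [r_2] - [r_3]) = -n\, r_1\, \epsilon(r_2,r_3)$, where $\epsilon \in \{0,1\}$ is exactly the carry recorded by the formula \eqref{eq:alphaZn} for $\alpha^N$, and the resulting factor of $n$ cancels one factor of the denominator $n^2$. Normalization $\bar\gamma(r,0) = \bar\gamma(0,s) = 1$ is immediate and yields $\gammahol_{\rhohol}(\eo,\et) = \bar\gamma(1,0) = 1 = \bar\gamma(0,1) = \gammahol_{\rhohol}(\et,\eo)$, so that the section condition $\langle \Psi_\Sigma^{\rm grp}, \gammahol_{\rhohol}\rangle = 1$ holds.

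Substituting into the ratio, both factors depend only on the first coordinates $b$ and $a$ of $A\et$ and $A\eo$, giving $\stabhom{\rhohol}(A) = \bar\gamma(b,a)/\bar\gamma(a,b) = e^{\pm 2\pi i N(b[a] - a[b])/n^2}$. Imposing the stabilizer conditions $a \equiv 1$ and $b \equiv 0 \pmod n$ (equivalently $A \in \Gamma_1(n)$, cf.\ \eqref{eq: stabilizer condition}) forces $[a] = 1$ and $[b] = 0$, collapsing the exponent to $\pm 2\pi i N b/n^2$ and producing \eqref{eq:RrhoFreed}. I expect the one genuinely delicate point to be this final sign-and-representative bookkeeping: one must track the coboundary convention and the choice of residues carefully to land on the stated phase rather than its inverse, consistent with the convention-dependent sign already flagged in the footnote to Proposition \ref{ex: Nora example}, and it can be sanity-checked against that proposition at $A = T^n$ (where $b = n$). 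Every other step is a direct computation.
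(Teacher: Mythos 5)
Your proposal is correct and follows the same high-level strategy as the paper's proof: both reduce the theorem to evaluating the ratio \eqref{eq: important ratio} against an explicitly constructed normalized primitive $\gammahol_{\rhohol}$ of $\rhohol^*\alpha^N$ whose pairing with the fundamental cycle is $1$ (which, by uniqueness of the class $[\gammahol_{\rhohol}]$ and invariance of the ratio under multiplication by exact cochains, is all that is needed). Where you differ is in the choice of primitive. The paper uses the product formula \eqref{eq: explicit gamma}, $\gammahol_{\rhohol}(x\eo+y\et,z\eo+w\et)=\prod_{k=0}^{x-1}\alpha(g,g^k,g^z)$, which is valid for an arbitrary group $G$, element $g$, and $3$-cocycle $\alpha$; the carry-counting then happens at the evaluation stage, where one counts how many factors $\alpha^N(1,k,a)$ are nontrivial (exactly one per period of $n$, giving $e^{2\pi i N\delta/n}$ with $b=n\delta$), while the denominator dies because $[b]=0$ and $\alpha^N$ is normalized. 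Your primitive $\mathrm{pr}_1^*\bar\gamma$ with $\bar\gamma(r,s)=e^{\pm 2\pi i N r[s]/n^2}$ is instead special to the cyclic case: the carry computation is moved into the verification that $d\bar\gamma=q^*\alpha^N$, after which the ratio is a one-line substitution and the character $b[a]-a[b]$ is visible for all of $\SL_2(\Z)$, not just the stabilizer. What the paper's choice buys is generality---the same formula \eqref{eq: explicit gamma} is reused to recover Proposition \ref{ex: Nora example}; what your choice buys is transparency and brevity in the case at hand. Both arguments are legitimate.

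The one point you flag as delicate---the sign---is indeed the only real issue, and it is worth knowing that it is delicate in the paper as well, not just in your write-up. With the coboundary convention displayed in the proof of Proposition \ref{ex: Nora example}, namely $d\gammahol(g_1,g_2,g_3)=\gammahol(g_2,g_3)\gammahol(g_1,g_2g_3)\gammahol(g_1g_2,g_3)^{-1}\gammahol(g_1,g_2)^{-1}$, your primitive is forced to be $\bar\gamma(r,s)=e^{-2\pi i N r[s]/n^2}$, and the ratio then comes out as $e^{-2\pi i N b/n^2}$, the inverse of \eqref{eq:RrhoFreed}; one lands on the stated phase only with the opposite convention. The same tension is already present internally in the paper: under the displayed convention, the cochain \eqref{eq: explicit gamma} satisfies $d\gammahol_{\rhohol}=(\rhohol^*\alpha)^{-1}$ rather than $\rhohol^*\alpha$ (equivalently, it recovers Proposition \ref{ex: Nora example} only up to a global inverse---compare the theorem at $A=T^n$, which gives $e^{+2\pi i N/n}$, with that proposition, which gives $\prod_{j=0}^{n-1}\alpha^N(g,g^j,g)^{-1}=e^{-2\pi i N/n}$). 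So your unresolved $\pm$ is not a conceptual gap but exactly the convention-dependence the paper's own footnote acknowledges; to finish, you would fix one convention for $d$ once and for all and check your exponent sign against it, as you propose.
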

\begin{proof} 
We start with a general observation for a group $G$, homomorphism $\rhohol\colon \Z^2\to G$ as in \eqref{eq: rhohol}, and 3-cocycle $\alpha\in Z^3(G;\U(1))$. Define
    \begin{align}\label{eq: explicit gamma}
        \gammahol_{\rhohol} \left(x\eo + y\et, z\eo + w\et \right) = \left\{ \begin{array}{ll}
            \prod_{k=0}^{x-1} \alpha(g, g^k, g^z)&  \text{ if $x \ge 0$},\\
             \prod_{k=1}^{-x} \frac{1}{\alpha(g, g^{-k}, g^z)}& \text{ if $x \le 0$}. 
        \end{array}\right. 
    \end{align}
    It is straightforward (if somewhat tedious) to check that $d\gammahol_{\rhohol} = \rhohol^* \alpha$, and it is also immediate that $\gammahol_{\rhohol}\left( \eo, \et \right) = \gammahol_{\rhohol}\left( \et, \eo \right) = 1$. Therefore, to calculate $\stabhom{\rhohol}(A)$, one just needs to evaluate~\eqref{eq: important ratio}. 

We apply this observation to the case at hand. First we calculate the  denominator, $\gammahol_{\rhohol}(a\eo + c\et, b\eo + d\et)$ of~\eqref{eq: important ratio}. By definition of $\gammahol_{\rhohol}$, this is a product of terms of the form $\alpha^N(1 + n\Z, \pm k + n\Z, b + n\Z)$.  By Equation \ref{eq: stabilizer condition}, $b + n\Z = n\Z$, and since $\alpha^N$ is normalized, all of these terms are trivial. We conclude that the denominator is equal to $1$. 

Now we consider the numerator, $\gammahol_{\rhohol}(b\eo + d\et, a \eo + c\et)$ of~\eqref{eq: important ratio}.  If $b \ge 0$, we have 
\begin{align*}
    \gammahol_{\rhohol}(b\eo + d\et, a\eo + c\et) = \prod_{k=0}^{b-1}
\alpha(1 + n\Z, k + n\Z, a + n\Z).
\end{align*}
By Equation \eqref{eq: stabilizer condition}, $a + n\Z = 1 + n\Z$, and we can write $b = n \delta$ for some $\delta \in \Z_{\ge 0}$. Then 
\begin{align*}
    \prod_{k=0}^{b-1}
\alpha(1 + n\Z, k + n\Z, a + n\Z) = \left( \prod_{k = 0}^{n-1}\alpha(1 + n\Z, k + n\Z, 1 + n\Z)\right)^\delta. 
\end{align*}
As $k$ goes from $0$ to $n-1$, only the value $k = n-1$ satisfies the condition $k + 1 \ge n$, yielding an $\alpha^N$ term with value $e^{2\pi i N/ n}$; the remaining $\alpha^N$ terms are all trivial. Therefore, 
\begin{align*}
     \gammahol_{\rhohol}(b\eo + d\et, a\eo + c\et) = e^{2 \pi i N \delta / n}.
\end{align*}
Similarly, when $b<0$, we can write $b = n\delta$ with $\delta \in \Z_{<0}$, and can show that again
\begin{align*}
     \gammahol_{\rhohol}(b\eo + d\et, a\eo + c\et) = e^{2 \pi i N \delta / n}.
\end{align*}
We conclude the claimed value~\eqref{eq:RrhoFreed}.
\end{proof}

\begin{rmk}
    We observe that~\eqref{eq: explicit gamma} easily recovers Proposition \ref{ex: Nora example} in the case that $A=T^n$.
\end{rmk}

\begin{proof}[Proof of Theorem~\ref{thm:Klein}]
Let $\alpha\in Z^3(\Z/n\Z;\U(1))$ denote the 3-cocycle of Example \ref{ex: alpha}. First we observe that the $\Z/n\Z$-action on $\sPG$ is trivial using~\eqref{eq:betaeq} and the fact that $\alpha$ is a normalized 3-cocycle. This $\Z/n\Z$-action also trivial on the line bundle from \cite[Proposition 4.12]{Freed_Det}, e.g., using its description as a determinant line bundle. 

It remains to compare the mapping class group actions. For this it suffices to show that the character of $\Gamma_1(n)$ agrees with the one from \cite[Proposition 4.12]{Freed_Det} (up to the sign explained in Remark~\ref{eq:rmkFreedspin}).  In Freed's notation, $\rhohol$ corresponds to $u = \frac{1}{n}$ and $v = 0$, and the values $s, t$ are given by the equation 
    \begin{align*}
        \begin{bmatrix}
            s \\ t
        \end{bmatrix} =  (I-A^T) \begin{bmatrix}
            u \\ v
        \end{bmatrix}, 
    \end{align*}
    which yields $s = (1-a)/n$ and $t = -b/n$. Under this translation,  \cite[Equation 4.14]{Freed_Det} agrees with the value from~\eqref{eq:RrhoFreed}. 
\end{proof}

\begin{rmk}
   For an arbitrary group $G$, cocycle $\alpha \colon G^3 \to A$ and homomorphism $\rhohol \colon \Z^2 \to G$, Océane Perreault (a student of the second author) has computed an cochain $\gammahol_\rhohol$ analogous to that of \eqref{eq: explicit gamma}, which allows a direct calculation of the ratio \eqref{eq: important ratio}. The formulas are somewhat complicated, and since they are not needed for our applications, we omit to include them here.
\end{rmk}

\appendix
\section{$\U(1)$-bundles, line bundles, and cocycles}
We give here our conventions and basic results on line bundles and principal $\U(1)$-bundles on (discrete) groupoids. This material can be extended to smooth bundles on Lie groupoids; we do not need such bundles for this work, but we use a framework which generalizes in a straightforward way to the smooth setting. 

\subsection{Line bundles and $\U(1)$-bundles on groupoids}

\begin{defn}\label{defn:U1bundle}
A \emph{principal $\U(1)$-bundle $\sP\to \sC$} over a groupoid $\sC$ is a groupoid $\sP$ with a (weak) $\U(1)$-action with the property that for one epimorphism $\cc\colon C_0\twoheadrightarrow \sC$ the pullback $\cc^*\sP$ is equivariantly equivalent to $C_0\times \U(1)$ over $C_0$. A morphism of $\U(1)$-bundles over $\sC$ is a $\U(1)$-equivariant functor $\sP\to \sP'$ over~$\sC$. \end{defn}

(See Definition \ref{defn:weakaction} for the definition of a weak action.)

The collection of $\U(1)$-bundles on $\sC$ forms a symmetric monoidal groupoid under tensor product, denoted~$(\Bun_{\U(1)}(\sC),\otimes)$. The \emph{trivial $\U(1)$-bundle} is the product $\underline{\U(1)}:=\sC\times \U(1)$, where $\U(1)$ is viewed as a discrete category with the evident $\U(1)$-action. A \emph{trivialization} of a bundle $\sP\to \sC$ is an isomorphism $\tau\colon \underline{\U(1)}\xrightarrow{\sim} \sP$ with the trivial bundle. By the usual arguments, an isomorphism $\sP\simeq \sC\times \U(1)$ is equivalent data to a section $\sigma\colon \sC\to \sP$. Here a section is a functor $\sigma \colon \sC \to \sP$ together with a natural isomorphism $\pi \circ \sigma \to \id_\sC$; however, for a section of $\cc^*\sP$ over a cover $\cc \colon C_0 \to \sC$, the natural isomorphism is trivial because $C_0$ is a set. 

\begin{defn}\label{def: weak fibers}
    For a $\U(1)$-bundle $\pi \colon \sP \to \sC$ and an object $c \in \sC$, the \emph{fiber of $\sP$ at $c$} is the weak fiber product $\{c\} \times _\sC \sP$, denoted $\sP_c$. It carries a $\U(1)$-action induced from the action on $\sP$. We denote the objects of the fiber by $(c, p, \varphi)$, where $p \in \sP$ and $\varphi \colon \pi(p) \to c \in \sC$. A choice of object $\sigma(c)$ (provided for example by a section) in the fiber induces an equivalence
\begin{align}\label{eq: section sigma gives iso of fiber}
    \U(1) \to \sP_c, \qquad z \mapsto \sigma(c) \cdot z,
\end{align}
which we will also denote by $\sigma(c)$, by a slight abuse of notation. 
\end{defn}

\begin{rmk}
    We caution that the functor $\sP \to \sC$ is not necessarily a fibration of categories, and in particular the natural functors from strict fibers to weak fibers are in general not equivalences. 
\end{rmk}

\begin{defn}\label{def: bundle induces functor on fibers}
The universal property of weak fiber products implies that a morphism $f \colon c \to d$ in $\sC$ induces a $\U(1)$-equivariant morphism of the fibers of the $\U(1)$-bundle, which we will denote by 
$$\sP(f) \colon \sP_c \to \sP_d.$$
On objects, $\sP(f)(c, p, \varphi)=(d, p, f \circ \varphi)$.
\end{defn}

As in the setting of bundles over manifolds, principal $\U(1)$-bundles over a groupoid $\sC$ are equivalent to Hermitian line bundles over $\sC$, as we now briefly explain. 

\begin{defn}
Let $\Line$ denote the \emph{groupoid of hermitian lines}, whose objects are 1-dimensional complex vector spaces with hermitian inner product and whose maps are linear isometries. The tensor product of vector spaces endows $\Line$ with a symmetric monoidal structure $\otimes$. 
\end{defn}

\begin{rmk}\label{rmk:Lineequiv}
Using that every 1-dimensional vector space is isomorphic to $\C$, there is an equivalence of groupoids $\Line\simeq \pt\sq\U(1)$. The monoidal strucure on $\Line$ corresponds to the 2-group structure on $\pt\sq \U(1)$. 
\end{rmk}

\begin{defn}[{\cite[\S1]{FQ}}]
For a groupoid $\sC$, a \emph{hermitian line bundle} on $\sC$ is a functor $\sL\colon \sC\to \Line$, and an isomorphism of hermitian line bundles is a natural transformation of functors. 
\end{defn}

Explicitly, a line bundle is the data of a hermitian line $\sL_y$ for each object $y\in \sC$, and a linear isometry $\sL(f)\colon \sL_y\to \sL_x$ for each morphism $f\colon y\to x$ in $\sC$. These linear maps are required to be compatible with identity morphisms and composition in $\sC$. An isomorphism $\sL\to \sL'$ in $\Line(\sC)$ is a linear isomorphism $\sL_x\to \sL_x'$ for each $x\in \sC$ satisfying the natural compatibility property.

Hermitian line bundles on $\sC$ and their isomorphisms form a groupoid $\Line(\sC)$. The tensor product of lines endows $\Line(\sC)$ with a symmetric monoidal structure also denoted~$\otimes$. The monoidal unit $\underline{\C}\in \Line(\sC)$ is the \emph{trivial line} given by the constant functor which sends objects to the standard line $\C\in \Line$ and sends morphisms to the identity automorphism $1\in \U(1)= \Aut(\C)$. 

\begin{defn}
A \emph{trivialization} of a line bundle $\sL$ is a natural isomorphism $\tau\colon \underline{\C}\xrightarrow{\sim} \sL$ with the trivial line. 
\end{defn}

A functor $F\colon \sC\to \sC'$ between groupoids induces a symmetric monoidal (pullback) functor
$$
F^*\colon \Line(\sC')\to \Line(\sC).
$$
When $F\colon \sC\to \sC'$ is an equivalence of categories, the pullback functor $F^*$ induces is an equivalence between symmetric monoidal groupoids of hermitian line bundles. 

\begin{ex} 
For a given a groupoid $\sC$, using the skeletal presentation $\sC\simeq \coprod_{[x]\in \pi_0(\sC)} \pt\sq \Aut(x)$ as outlined in Example~\ref{eg:coprodofgrps}, a line bundle on $\sC$ is equivalent data to a collection of 1-dimensional representations of $\Aut(x)$ for a representative of each isomorphism class of object $x\in \sC$. Such a line bundle is trivializable if and only if each $\Aut(x)$-representation is the trivial representatation. 
\end{ex}

\begin{ex}
 Given a cover $\cc\colon C_0\twoheadrightarrow \sC$ from a discrete category $C_0$, the pullback of any line bundle on $\sC$ admits a trivialization over $C_0$, denoted $\tau\colon 
 \underline{\C} \xrightarrow{\sim}\cc^*\sL$. The original line bundle can then be described in terms of descent data for the cover $C_0\to \sC$; see \ref{subsection: cocycles} below. One can check that the cocycle data for the skeletal presentation of the previous example is exactly the data of $\Aut(x)$-representations discussed there. 
 \end{ex}

 \begin{prop}\label{prop: line bundles are U(1) bundles}
For a groupoid $\sC$, the symmetric monoidal category of hermitian line bundles over $\sC$ is equivalent to the symmetric monoidal category of $\U(1)$-bundles over $\sC$,
\beq\label{eq:lineandbundle}
(\Line(\sC),\otimes)\simeq (\Bun_{\U(1)}(\sC),\otimes). 
\eeq
This equivalence is natural in $\sC$. 
\end{prop}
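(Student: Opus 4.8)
The plan is to exhibit the equivalence \eqref{eq:lineandbundle} by the usual passage between a hermitian line bundle and its unit–circle (frame) bundle, transported to the groupoid setting. I will construct a pair of symmetric monoidal functors
$$
\Phi\colon \Line(\sC)\to \Bun_{\U(1)}(\sC),\qquad \Psi\colon \Bun_{\U(1)}(\sC)\to \Line(\sC),
$$
and produce natural isomorphisms $\Phi\circ\Psi\cong\id$ and $\Psi\circ\Phi\cong\id$. Both functors will be choice-free, which is what keeps the monoidal and naturality bookkeeping manageable.

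For $\Phi$: given $\sL\colon \sC\to \Line$, let $\Phi(\sL)$ be the groupoid whose objects are pairs $(c,v)$ with $c\in\mathrm{Ob}(\sC)$ and $v\in \sL_c$ a unit vector, and whose morphisms $(c,v)\to(c',v')$ are the morphisms $f\colon c\to c'$ of $\sC$ with $\sL(f)(v)=v'$; the $\U(1)$-action is $(c,v)\cdot z=(c,zv)$ and the projection is $(c,v)\mapsto c$. Since each $\sL_c$ is isomorphic to $\C$, its unit vectors form a $\U(1)$-torsor, so over the object cover $\cc\colon \mathrm{Ob}(\sC)\twoheadrightarrow\sC$ a choice of unit vector in each $\sL_{\cc(x)}$ gives a section, hence an equivariant equivalence $\cc^{*}\Phi(\sL)\simeq \mathrm{Ob}(\sC)\times\U(1)$; this verifies the local triviality required by Definition \ref{defn:U1bundle}.

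For $\Psi$: given a $\U(1)$-bundle $\sP\to\sC$, I set $\Psi(\sP)_c:=\sP_c\times_{\U(1)}\C$, the line associated to the weak fiber $\sP_c$ of Definition \ref{def: weak fibers}. Because $\sP_c$ is a $\U(1)$-torsor groupoid, this associated bundle is a $1$-dimensional complex vector space, on which the standard ($\U(1)$-invariant) hermitian form on $\C$ descends; a morphism $f\colon c\to d$ acts through the induced equivariant functor $\sP(f)\colon \sP_c\to\sP_d$ of Definition \ref{def: bundle induces functor on fibers}, giving a linear isometry $\Psi(\sP)(f)$, functorially. I must take care here that $\sP\to\sC$ is not a fibration (see the remark following Definition \ref{def: weak fibers}), so $\Psi$ genuinely uses the weak fibers rather than strict ones; this is the step where I expect the main technical care to be required.

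The natural isomorphism $\Phi\circ\Psi\cong\id$ matches a unit vector of $\sP_c\times_{\U(1)}\C$ with the point of the torsor $\sP_c$ it represents, and $\Psi\circ\Phi\cong\id$ evaluates a class $[(c,v),\lambda]$ to $\lambda v\in\sL_c$; both are readily checked to be $\U(1)$-equivariant (resp.\ linear) and natural in $\sC$. For the symmetric monoidal structure I will match the pointwise tensor product of lines with the contracted product $(\sP\times_\sC\sP')/\U(1)$ of bundles, under which $\Phi$ and $\Psi$ are strong monoidal and the symmetry is visibly preserved; naturality in $\sC$ follows since both constructions commute with the pullback functor $F^{*}$ along $F\colon\sC\to\sC'$. (Alternatively one may first invoke Remark \ref{rmk:Lineequiv} to replace $\Line$ by $\pt\sq\U(1)$ and identify $\Line(\sC)$ with $\Fun(\sC,\pt\sq\U(1))$, reducing the claim to a Grothendieck-type description of $\Bun_{\U(1)}(\sC)$ with total space $\mathrm{Ob}(\sC)\times\U(1)$.) The principal obstacle is not any single computation but assembling the coherence data—checking that $\Psi$ is well defined on weak fibers and that the two natural isomorphisms, the monoidal constraints, and the naturality squares all fit together compatibly.
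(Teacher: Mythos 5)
Your proposal is correct, but it takes a more explicit, two-sided route than the paper. The paper's own proof is a one-line sketch: invoke Remark~\ref{rmk:Lineequiv} to identify $\Line\simeq \pt\sq\U(1)$, regard a line bundle as a classifying functor $\sC\to \pt\sq\U(1)$, and pull back the universal $\U(1)$-bundle (i.e.\ form the weak fiber product of $\sC\to\pt\sq\U(1)\leftarrow \pt$); the inverse functor and all coherence checks are left implicit. Your functor $\Phi$ is exactly this construction made concrete---a unit vector in $\sL_c$ is the same datum as an isometry $\C\to\sL_c$, i.e.\ an object of the weak fiber product with $\pt$---so the real added content in your argument is the explicit quasi-inverse $\Psi$ and the verification that the unit and counit are equivalences. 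That buys a self-contained proof where the paper has only a sketch, at the cost of the bookkeeping you flag. Two points deserve to be made precise. First, $\Psi(\sP)_c:=\sP_c\times_{\U(1)}\C$ must be formed from \emph{isomorphism classes} $\pi_0(\sP_c)$ of the weak fiber, since $\sP_c$ is a groupoid rather than a set; local triviality (Definition~\ref{defn:U1bundle}) is what guarantees $\pi_0(\sP_c)$ is a genuine $\U(1)$-torsor, and the explicit formula in Definition~\ref{def: bundle induces functor on fibers} shows the induced maps on $\pi_0$ compose strictly, so $\Psi(\sP)$ is a functor. Second, fully faithfulness of the comparison $\sP\to\Phi\Psi(\sP)$ is not automatic: it needs the fact that two morphisms $h,h'\colon p\to p'$ in $\sP$ with $\pi(h)=\pi(h')$ coincide, which again follows from local triviality (hom-sets in $\cc^*\sP$ are at most singletons because this holds in $C_0\times\U(1)$). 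Neither point is a gap---both are consequences of the definitions you cite---but they are exactly where your ``readily checked'' assertions consume the hypothesis of local triviality, so they should appear in the written proof.
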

\begin{proof}[Proof sketch.] 
This follows from Remark~\ref{rmk:Lineequiv}, where one pulls back the universal $\U(1)$-bundle along a functor from $\sC$ to $\Line\simeq \pt\sq\U(1)$ to obtain a $\U(1)$-bundle over $\sC$. 
\ep

\subsection{Cocycles for line bundles}\label{subsection: cocycles}
Next, we extract a formula for a cocycle for a line bundle (or the corresponding $\U(1)$-bundle) relative to a cover $\cc\colon C_0\twoheadrightarrow \sC$ as in~\eqref{eq:presentationforcover}. 

Given a Hermitian line bundle $\sL \colon \sC \to \Line$, consider its pullback along the equivalence $\cc^*\sC \to \sC$. This gives a functor with values Hermitian lines $\sL_{\cc(x)} \in \Line$ on objects, and on morphisms $(x,y, f \colon \cc(y) \to \cc(x)) \in C_0 \times_\sC \C_0$, linear maps $\sL_{\cc(y)} \to \sL_{\cc(x)}$. Choosing an isomorphism $\tau_x \colon \C \to \sL_{c(x)}$ for each $x \in C_0$ (i.e. a trivialization $\tau$ of $\cc^* \sL$), the linear maps induce unitary automorphisms of $\C$, and hence can be identified with a number $R(x, y, f) \in \U(1)$. 

Equivalently, given a principal $\U(1)$-bundle $\sP\to\sC$, a choice of trivializing section $\sigma \colon C_0 \to \sP$ determines a function $R \colon C_0 \times_\sC C_0 \to \U(1)$, where multiplication by $R(x,y,f) \in \U(1)$ corresponds to the composition
\begin{align}\label{eq: U(1) cocycle}
    \U(1) \xrightarrow{\sigma(y)} \sP_{c(y)} \xrightarrow{\sP(f)} \sP_{c(x)} \xrightarrow{\sigma(x)^{-1}} \U(1).
\end{align}
(Here we use the notation from Definitions \ref{def: weak fibers} and \ref{def: bundle induces functor on fibers}.) That is, $R(x,y,f) \in \U(1)$ is the unique element of $\U(1)$ such that $\sP(f)(\sigma(y))$ is isomorphic to $ \sigma(x)\cdot R(x,y,f)$ in $\sP_{c(x)}$. From this characterization, it is easy to see that $R$ is a \emph{normalized 2-cocycle},
$$
R(x,y,f)\circ R(y,z,g)=R(x,z,f\circ g),\qquad R(x,x,\id_{\cc(x)})=1.
$$

Conversely, given a cover $\cc \colon C_0 \to \sC$ and a normalized $2$-cocycle $R \colon C_0 \times_\sC C_0 \to \U(1)$, we obtain a Hermitian line bundle or equivalently a principal $\U(1)$-bundle. Indeed, we define a functor $\sL \colon \cc^*\sC \to \Line$ over the presentation of the groupoid $\sC$ associated to the cover $\sC$ as follows: on objects $x \in C_0$, we take $\sL_x := \C$. The morphisms of $\cc^*\sC$ are exactly the triples $(x,y, f \colon \cc(y) \to \cc(x))$, and $R(x,y,f) \in \U(1)$ gives the desired linear isomorphism $\sL_y \to \sL_x$. 

\subsection{Cocycles for equivariant line bundles}\label{subsection: equivariant cocycles}

Let $\Gamma$ be a group acting (perhaps weakly) on the right on a groupoid $\sC$. 
\begin{defn}\label{ex:Gammaequivariant}
 A \emph{$\Gamma$-equivariant line bundle (respectively, $\U(1)$-bundle) on $\sC$} is a line bundle (respectively, $\U(1)$-bundle) on the quotient groupoid $\sC\sq \Gamma$. An \emph{equivariant trivialization} is a trivialization of a line bundle (or $\U(1)$-bundle) on $\sC\sq \Gamma$. 
\end{defn}

\begin{prop}\label{prop: equivariant bundle description}
A $\Gamma$-equivariant $\U(1)$-bundle $\sP \to \sC \sq \Gamma$ on $\sC$ is equivalent to a $\U(1)$-bundle $\pi \colon \sP' \to \sC$ equipped with an action $\sP' \times \Gamma \to \sP'$ which commutes with the $\U(1)$-action, and for which the functor $\pi$-is $\Gamma$-equivariant. 
\end{prop}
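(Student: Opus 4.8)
The plan is to unwind both sides of the claimed equivalence into concrete descent data relative to a chosen cover and to exhibit the passage between them as inverse constructions. The statement is essentially a comparison between two packagings of the same information: on the one side, a $\U(1)$-bundle over the \emph{quotient} groupoid $\sC\sq\Gamma$; on the other, a $\U(1)$-bundle over $\sC$ together with a compatible $\Gamma$-action. I would begin by recalling the explicit description of the quotient groupoid $\sC\sq\Gamma$: its objects coincide with those of $\sC$, and a morphism is a pair $(f,g)$ with $g\in\Gamma$ and $f\colon y\to x\cdot g$ in $\sC$, with composition built from composition in $\sC$, multiplication in $\Gamma$, and the $2$-commuting data of the action.

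The forward direction is the more structural step. Given $\pi\colon\sP\to\sC\sq\Gamma$, I would first restrict along the inclusion of groupoids $\sC\hookrightarrow\sC\sq\Gamma$ which is the identity on objects and sends a morphism $f\colon y\to x$ to $(f,e)$, where $e\in\Gamma$ is the identity; pulling back $\sP$ along this inclusion produces a $\U(1)$-bundle $\sP'\to\sC$ in the sense of Definition~\ref{defn:U1bundle}, since the pullback of the trivializing cover of $\sC\sq\Gamma$ still trivializes. The $\Gamma$-action on $\sP'$ is then read off from the ``extra'' morphisms $(\id_{x\cdot g},g)\colon x\to x\cdot g$ in $\sC\sq\Gamma$: each element $g\in\Gamma$ supplies, via the functor $\sP$ on these morphisms and Definition~\ref{def: bundle induces functor on fibers}, a map of fibers $\sP'_x\to\sP'_{x\cdot g}$, assembling into a functor $\sP'\times\Gamma\to\sP'$. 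That this action commutes with the $\U(1)$-action is immediate because $\sP$ is $\U(1)$-equivariant throughout, and the compatibility of the two kinds of morphisms under composition in $\sC\sq\Gamma$ encodes exactly the equivariance of $\pi$ together with the associativity of the $\Gamma$-action.

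The reverse direction reconstructs $\sP$ from $(\sP',\pi,\cdot)$ by \emph{defining} a functor out of $\sC\sq\Gamma$: on objects it agrees with $\sP'$, and on a morphism $(f,g)\colon y\to x\cdot g$ it is the composite of $\sP'(f)\colon\sP'_y\to\sP'_{x\cdot g}$ with the inverse of the fiber isomorphism $\sP'_x\xrightarrow{\sim}\sP'_{x\cdot g}$ coming from the $\Gamma$-action. I would then check that these two constructions are mutually inverse up to natural isomorphism, and that both are compatible with the symmetric monoidal (tensor) structures, so that the equivalence is in fact an equivalence of symmetric monoidal groupoids; the final cocycle-level incarnation, spelling $\sP$ out as the pair of cocycles $R$ and $R_{\Diff(\Sigma)}$ appearing in~\eqref{eq: U(1) cocycle} and~\eqref{eq:equivcocycle}, can be recorded as a remark.

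\textbf{The main obstacle} I anticipate is bookkeeping with the \emph{weak} $\Gamma$-action: when the action on $\sC$ is not strict, the associator and unitor isomorphisms of Definition~\ref{defn:weakaction} enter the composition law of $\sC\sq\Gamma$, so the functoriality of the reconstruction (and the verification that the round-trips are naturally isomorphic to the identity) requires carefully tracking these coherence data rather than asserting strict equalities. In practice this is where one must confirm that the ``commuting'' of the $\Gamma$- and $\U(1)$-actions is witnessed by the correct natural isomorphisms; I would organize this by first treating the strict case, where everything is a literal equality of functors, and then observing that the weak case differs only by inserting the action's coherence isomorphisms, which are already constrained to satisfy the associativity and unit axioms. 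Because the fibers $\sP'_c$ are $\U(1)$-torsors and all the maps in sight are $\U(1)$-equivariant isomorphisms, no further obstruction arises beyond this coherence check.
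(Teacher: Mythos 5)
Your construction of $\sP'$ by pulling back along the quotient functor $\sC\to\sC\sq\Gamma$, with the $\Gamma$-action read off from the morphisms $(\id_{x\cdot g},g)\colon x\to x\cdot g$, is essentially the paper's own proof: those morphisms are precisely the 2-commuting data in the paper's pullback diagram, and the dashed arrow it obtains from the universal property of the 2-pullback is your fiberwise action assembled into a functor $\sP'\times\Gamma\to\sP'$. The paper only sketches this forward direction, so your additional outline of the inverse construction and of the coherence bookkeeping for weak actions elaborates on, but does not diverge from, the paper's approach.
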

\begin{proof}[Proof sketch.] 
The pullback of $\sP$ along the quotient functor $\sC \to \sC \sq \Gamma$ yields a principal $\U(1)$-bundle $\sP' \to \sC$. The following diagram 2-commutes, yielding the dashed arrow, which gives the action of $\Gamma$ on $\sP'$:
\begin{center}
    \begin{tikzcd}
   \sP' \times \Gamma \arrow[d]\arrow[dr, dashed] \arrow[r, "\text{proj}"] & \sP' \arrow[dr] &\\
   \sC \times \Gamma\arrow[dr,"\act"] &\sP' \arrow[r]\arrow[d] & \sP \arrow[d]\\
        &\sC \arrow[r]& \sC \sq \Gamma.
    \end{tikzcd}
\end{center}

\end{proof}

As suggested by this proposition, the cocycle data for an equivariant $\U(1)$-bundle or line bundle can be divided into two pieces: an (ordinary) cocycle encoding the bundle over $\sC$, and a second piece of data encoding the $\Gamma$-action. 

More precisely, suppose we are given a $\Gamma$-equivariant principal $\U(1)$-bundle $\sP$ on a groupoid $\sC$ with $\Gamma$-action. Suppose also that we have a $\Gamma$-equivariant cover $\cc \colon C_0 \to \sC$, and a trivializing section $\sigma$ of the pullback of $\sP$ along the composition $C_0 \to \sC \to \sC\sq \Gamma$.  We obtain the pullback square in groupoids over $\pt\sq \U(1)$
\beq\nonumber
\begin{tikzpicture}[baseline=(basepoint)];
\node (A) at (0,0) {$C_0$};
\node (B) at (3,0) {$C_0\sq \Gamma$}; 
\node (C) at (0,-1.2) {$\sC$};
\node (D) at (3,-1.2) {$\sC\sq \Gamma$};
\draw[->>] (A) to (B);
\draw[->>] (A) to (C);
\draw[->>] (C) to (D);
\draw[->>] (B) to  (D);
\path (0,-.75) coordinate (basepoint);
\end{tikzpicture}
\eeq
whose arrows are all essential surjections. 

The section $\sigma$ can be viewed as giving trivializations for the intermediary bundles on $C_0\sq \Gamma$ and $\sC$ for the covers given by~$C_0$, and hence we obtain two cocycles using the arguments from \ref{subsection: cocycles}:
$$
R_\Gamma \colon C_0\times \Gamma \cong C_0 \times_{C_0 \sq \Gamma} C_0 \to \U(1), \quad R \colon C_0\times_{\sC} C_0\to \U(1).
$$
The formula for $R$ is given as in \eqref{eq: U(1) cocycle}, for the bundle $\sP' \to \sC$, and $R_\Gamma$ is given by the composition 
\beq\label{eq:equivcocycle}
R_\Gamma(x,g) \colon \U(1) \xrightarrow{\sigma(x \cdot g)} \sP_{\cc (x \cdot g)} \xrightarrow{\sP(g)} \sP_{\cc(x)} \xrightarrow{\sigma(x)^{-1}} \U(1)
\eeq
for $x \in C_0,g\in \Gamma$. Here $\sP{(g)}$ comes from the (left) action of $g^{-1}$ and the equivariance of $\cc$, so that $R_\Gamma(x,g)$ is the unique element of $\U(1)$ so that there is an isomorphism between $\sigma(x \cdot g)$ and $(\sigma(x) \cdot g) \cdot R_\Gamma(x, g)$ in the fiber $\sP'_{\cc(x \cdot g)}$. Together, these determine the cocycle for the line bundle on $\sC\sq \Gamma$ relative to the epimorphism $C_0\to \sC\sq \Gamma$ via
\beq\label{eq:finalcocycle}
R_{\Gamma} \cdot R \colon C_0\times_{\sC\sq \Gamma} C_0\simeq C_0\times_{\sC} C_0\times \Gamma\to \U(1),\quad (R_\Gamma\cdot R)(f\colon y\to x\cdot g,g)=R_\Gamma(x,g)\cdot R(f).
\eeq

\bibliographystyle{amsplain}
\bibliography{references.bib}

\providecommand{\bysame}{\leavevmode\hbox to3em{\hrulefill}\thinspace}
\providecommand{\MR}{\relax\ifhmode\unskip\space\fi MR }
\providecommand{\MRhref}[2]{%
  \href{http://www.ams.org/mathscinet-getitem?mr=#1}{#2}
}
\providecommand{\href}[2]{#2}
\begin{thebibliography}{10}

\bibitem{BL04}
J.~Baez and A.~Lauda, \emph{Higher-dimensional algebra. {V}. 2-groups}, Theory
  Appl. Categ. \textbf{12} (2004).

\bibitem{BaezStevenson}
J.~Baez and D.~Stevenson, \emph{The classifying space of a topological
  2-group}, Algebraic Topology Abel Symposia \textbf{4} (2009).

\bibitem{DBEsurvey}
D.~Berwick-Evans, \emph{Elliptic cohomology and quantum field theory},
  Encyclopedia of Mathematical Physics (2025).

\bibitem{BCMNP}
D.~Berwick-Evans, E.~Cliff, L.~Murray, A.~Nakade, and E.~Phillips, \emph{Flat
  principal 2-group bundles and flat string structures}, Contemporary
  Mathematics \textbf{813} (2024).

\bibitem{Bunke}
U.~Bunke, \emph{String structures and trivialisations of a {Pfaffian} line
  bundle}, Communications in Mathematical Physics \textbf{307} (2011).

\bibitem{Devoto}
J.~Devoto, \emph{Equivariant elliptic homology and finite groups}, Michigan
  Math. J. \textbf{43} (1996).

\bibitem{Freed_Det}
D.~Freed, \emph{On determinant line bundles}, Math. aspects of string theory
  (1987).

\bibitem{FHLT}
D.~Freed, M.~Hopkins, J.~Lurie, and C.~Teleman, \emph{{Topological Quantum
  Field Theories from Compact Lie Groups}}, {A Celebration of Raoul Bott's
  Legacy in Mathematics}, 5 2009.

\bibitem{FQ}
D.~S. Freed and F.~Quinn, \emph{{Chern-Simons} theory with finite gauge group},
  Communications in Mathematical Physics \textbf{156} (1993), no.~3, 435--472.

\bibitem{GanterHecke}
N.~Ganter, \emph{Hecke operators in equivariant elliptic cohomology and
  generalized {M}oonshine}, Groups and symmetries, CRM Proc. Lecture Notes,
  vol.~47, Amer. Math. Soc., Providence, RI, 2009, pp.~173--209.

\bibitem{GanterUsher}
N.~Ganter and R.~Usher, \emph{Representation and character theory of finite
  categorical groups}, Theory and Applications of Categories \textbf{31}
  (2016).

\bibitem{Grojnowski}
I.~Grojnowski, \emph{Delocalised equivariant elliptic cohomology}, Elliptic
  cohomology: Geometry, applications, and higher chromatic analogues ({H.
  Miller and D. Ravenel}, ed.), London Mathematical Society, 2007.

\bibitem{GKMP}
S.~Gukov, V.~Krushkal, L.~Meier, and D.~Pei, \emph{A new approach to
  (3+1)-dimensional {TQFTs} via topological modular forms}, preprint (2025).

\bibitem{Andrebicommutant}
A.~Henriques, \emph{Bicommutant categories from conformal nets}, preprint
  (2017).

\bibitem{MTCEll}
A.~Henriques and K.~Morrison, \emph{Rational generalised 2-equivariant elliptic
  cohomology}, preprint (2016).

\bibitem{Lang}
S.~Lang, \emph{Introduction to modular forms}, Springer-Verlag, New York, 1977.

\bibitem{Lerman}
E.~Lerman, \emph{Orbifolds as stacks?}, Enseign. Math. \textbf{2} (2010).

\bibitem{Lurie}
J.~Lurie, \emph{A survey of elliptic cohomology}, Algebraic Topology (N.~Baas,
  E.~Friedlander, J.~Bj\"orn, and P.~{O}st\ae{r}, eds.), vol.~4, Springer
  Berlin Heidelberg, 2009.

\bibitem{Rezk}
C.~Rezk, \emph{{Looijenga line bundles in complex analytic elliptic
  cohomology}}, Tunisian Journal of Mathematics \textbf{2} (2020), no.~1.

\bibitem{SP11}
C.~J. Schommer-Pries, \emph{Central extensions of smooth 2-groups and a
  finite-dimensional string 2-group}, Geometry \& Topology \textbf{15} (2011).

\bibitem{sinh}
H.~Sinh, \emph{Gr-categories}, Universit\'e {Paris VII} \textbf{doctoral
  thesis} (1975).

\bibitem{ST04}
S.~Stolz and P.~Teichner, \emph{What is an elliptic object?}, Topology,
  geometry and quantum field theory, London Math. Soc. LNS 308, Cambridge Univ.
  Press (2004), 247--343.

\bibitem{Waldorfstring}
K.~Waldorf, \emph{String connections and {Chern--Simons} theory}, Transactions
  of the American Mathematical Society \textbf{365} (2013).

\bibitem{willerton08}
S.~Willerton, \emph{The twisted {Drinfeld} double of a finite group via gerbes
  and finite groupoids}, Algebraic \& Geometric Topology \textbf{8} (2008).

\end{thebibliography}

\end{document}